\newtheorem{prop}{Proposition}[section]
\newtheorem{cor}[prop]{Corollaire}
\newtheorem{lem}[prop]{Lemme}
\newtheorem{thm}[prop]{Th{\'e}or{\`e}me}
\newtheorem{defin}[prop]{D\'efinition}
\newtheorem{clef}[prop]{\'Egalit\'e Clef}
\newtheorem{propdef}[prop]{Proposition-D\'efinition}
\newtheorem{theorem}[prop]{Th{\'e}or{\`e}me}
\newtheorem{corol}[prop]{Corollaire}
\newtheorem{Conjecture}[prop]{Conjecture}
\theoremstyle{definition}
\newtheorem{Remarque}[prop]{Remarque}
\newtheorem{Exemple}[prop]{Exemple}
\newcommand{\Jac} {\mathop{\mathrm{Jac}}}
\newcommand{\Ima} {\mathop{\mathrm{Im}}}
\newcommand{\Tr} {\mathop{\mathrm{Tr}}}
\newcommand{\Trinf} {\mathop{\mathrm{Tr}_{\infty}}}
\newcommand{\sinf} {\mathop{\mathrm{s}_{\infty}}}
\newcommand{\disc} {\mathop{\mathrm{disc}}}
\newcommand{\ordp} {\mathop{\mathrm{ord}_{p}}}
\newcommand{\ordv} {\mathop{\mathrm{ord}_{v}}}
\newcommand{\Ree} {\mathop{\mathrm{Re}}}
\newcommand{\Card} {\mathop{\mathrm{Card}}}
\newcommand{\Sp} {\mathop{\mathrm{Sp}}}
\newcommand{\Sym} {\mathop{\mathrm{Sym}}}
\newcommand{\divi} {\mathop{\mathrm{div}}}
\newcommand{\pgcd} {\mathop{\mathrm{pgcd}}}
\newcommand{\supp} {\mathop{\mathrm{supp}}}
\newcommand{\Divi} {\mathop{\mathrm{Div}}}
\newcommand{\cl} {\mathop{\mathrm{cl}}}
\newcommand{\Nk} {\mathop{\mathrm{N}_{k/\mathbb{Q}}}}
\newcommand{\Nkprime} {\mathop{\mathrm{N}_{k'/\mathbb{Q}}}}
\newcommand{\rang} {\mathop{\mathrm{rang}}}
\newcommand{\dime} {\mathop{\mathrm{dim}}}
\newcommand{\hF} {\mathop{h_{\mathrm{F}}}}
\newcommand{\hFprime} {\mathop{h_{\mathrm{F}}'}}
\newcommand{\hstprime} {\mathop{h_{\mathrm{st}}'}}
\newcommand{\hst} {\mathop{h_{\mathrm{st}}}}
\newcommand{\Gal} {\mathop{\mathrm{Gal}}}
\begin{document}

\title{Minoration de la hauteur de N\'eron-Tate sur les surfaces ab\'eliennes}
\author{Fabien Pazuki}

\maketitle 

\begin{center}
Manuscripta Mathematica 229-0593
\end{center}

\vspace{0.5cm}

\begin{centering}
{\small{\textsc{R\'esum\'e} : On obtient dans le pr\'esent texte des r\'esultats en direction d'une conjecture de Lang et Silverman de minoration de la hauteur canonique sur les vari\'et\'es ab\'eliennes de dimension 2 sur un corps de nombres. La m\'ethode utilis\'ee est une d\'ecomposition en hauteurs locales. On d\'eduit en corollaire une borne uniforme sur la torsion de familles de surfaces ab\'eliennes et une borne uniforme sur le nombre de points rationnels de familles de courbes de genre 2.}}
\end{centering}

\vspace{0.5cm}

\begin{centering}
{\small{\textsc{Abstract} : This paper contains results concerning a conjecture made by Lang and Silverman, predicting a lower bound for the canonical height on abelian varieties of dimension 2 over number fields. The method used here is a local height decomposition. We derive as corollaries uniform bounds on the number of torsion points on families of abelian surfaces and on the number of rational points on families of genus 2 curves.}}
\end{centering}

\vspace{0.5cm}

{\flushleft
\textbf{Keywords :} Heights, Abelian varieties, Torsion points, Rational points.\\
\textbf{Mathematics Subject Classification :} 11G50, 14G40, 14G05, 11G30, 11G10.}

\vspace{0.5cm}

\thispagestyle{empty}

\section{La conjecture de Lang et Silverman}

\subsection{Pr\'esentation}
Soit $k$ un corps de nombres de degr\'e $d$ sur $\mathbb{Q}$. On va s'int\'eresser \`a une question figurant dans le livre de S. Lang \cite{Lan} page 92 et qui concerne la minoration de la hauteur de N{\'e}ron-Tate d'un point rationnel d'ordre infini sur une courbe elliptique. Cette question a \'et\'e la source d'un grand nombre de travaux et de g\'en\'eralisations en g\'eom\'etrie diophantienne. On peut la formuler de la mani\`ere suivante :

\begin{Conjecture}(Lang)
Pour tout corps de nombres $k$, il existe une constante positive $c(k)$
telle que pour toute courbe elliptique $E$ d{\'e}finie sur $k$ et tout
point $P$ d'ordre infini de $E(k)$ on ait :
$$\widehat{h}(P) \geq c(k)\, \max\Big\{\log \Nk(\Delta_{E}),h(j_{E})\Big\}, $$
o{\`u} $\widehat{h}(.)$ est la hauteur de N{\'e}ron-Tate sur $E$,
$\Nk(\Delta_{E})$ la norme du disciminant minimal de la
courbe $E$ et $h(j_{E})$ la hauteur de Weil logarithmique et absolue
de l'invariant modulaire $j_{E}$ de la courbe $E$.
\end{Conjecture}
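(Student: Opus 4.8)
\emph{Plan.} This conjecture of Lang is open in general; the realistic aim is to describe the route that yields its partial and conditional forms, which is also the engine behind the abelian-surface statements of the present paper. Everything rests on the decomposition of the N\'eron--Tate height into N\'eron local heights,
$$\widehat h(P)=\frac{1}{[k:\mathbb Q]}\sum_{v\in M_{k}}n_{v}\,\widehat\lambda_{v}(P),\qquad n_{v}=[k_{v}:\mathbb Q_{v}],$$
together with the observation that, up to a factor depending only on $d$ and a bounded additive term, $\max\{\log\Nk(\Delta_{E}),h(j_{E})\}$ is comparable both to the stable Faltings height $\hF(E)$ and to $h(j_{E})$ itself --- the usual dictionary relating $h(j_{E})$, the minimal discriminant, the conductor and $\hF(E)$. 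One may pass to a field of semistable reduction, for instance $k'=k(E[12])$, without harm, since $[k':k]\le|\mathrm{GL}_{2}(\mathbb Z/12\mathbb Z)|$ is absolutely bounded; over $k'$ the curve has everywhere good or split multiplicative reduction, and it is then enough to prove $\widehat h(P)\ge c(k)\,\hF(E)$ for $P$ of infinite order.

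First I would run the local analysis at the places $v$ of bad reduction and, in the same spirit, at the archimedean places at which $|j_{E}|_{v}$ is large, where Tate's $v$-adic uniformization and its complex analogue make $\widehat\lambda_{v}$ explicit: with (Tate) parameter $q_{v}$ and $P\leftrightarrow u_{v}$, one has, in the standard normalization, $\widehat\lambda_{v}(P)=\tfrac12 B_{2}(t_{v})\log|q_{v}|_{v}^{-1}+\varepsilon_{v}(P)$, where $t_{v}\in[0,1)$ records the position of $P$ on the relevant torus, $B_{2}$ is the second Bernoulli polynomial, and $\varepsilon_{v}(P)\ge 0$ vanishes unless $P$ is close to $O$ at $v$. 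If $P$ is in ``generic position'' --- $t_{v}=0$ and $P$ away from $O$ at every such place --- then $\widehat\lambda_{v}(P)\ge\tfrac1{12}\log|q_{v}|_{v}^{-1}$, and summing gives $\widehat h(P)\ge c_{0}\,h(j_{E})+O(1)$ with $c_{0}>0$ absolute, hence $\widehat h(P)\ge c(k)\,\hF(E)$, unconditionally. The content of the conjecture therefore lives entirely in the points in ``special position'': those lying on a non-identity component at some finite bad place, those with $t_{v}$ near $\tfrac12$ at an archimedean place of large $|j_{E}|_{v}$, and those clustering near $O$.

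The finite special components can be softened by passing to the multiple $mP$, where $m$ is the least common multiple of the orders of the component groups at the finite bad places: then $mP$ is in generic position at all finite places, and one gets $\widehat h(P)\ge c(k)\,m^{-2}\,\hF(E)-O(1)$. Since $m$ is only polynomially bounded in $\hF(E)$, this alone produces no better than a lower bound that decays with $\hF(E)$, in the spirit of David's transcendence estimates for linear forms in elliptic logarithms --- far from the linear bound sought.

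Recovering the \emph{linear} lower bound is the decisive step and the main obstacle: it forces one to control the archimedean local heights rather than discard them, i.e. to prove $\sum_{v\mid\infty}n_{v}\,\widehat\lambda_{v}(P)\ge c(k)\,\hF(E)-O(1)$ for $P$ of infinite order, using the real-analytic expression of $\widehat\lambda_{v}$ through the normalized Weierstrass $\sigma$-function on $E(\mathbb C)\cong\mathbb C/(\mathbb Z+\tau_{v}\mathbb Z)$ together with a lower bound for linear forms in elliptic logarithms keeping the elliptic logarithm of $P$ away from the torsion translates of the period lattice. A clean inequality of this type is exactly what is unknown unconditionally: it does follow from Szpiro's inequality $\log\Nk(\Delta_{E})\ll_{d}\log\Nk(\mathfrak n_{E})+O(1)$, which keeps the archimedean and arithmetic sizes of $E$ comparable and lets one control the periods, as in the work of Hindry and Silverman --- but without such a hypothesis the conjecture remains open. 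In dimension two the plan of the present paper is precisely to push the non-archimedean half of this analysis through, with Tate's parametrization replaced by Raynaud's uniformization and the Bernoulli polynomial by N\'eron functions attached to the theta divisor; this already yields unconditional lower bounds for suitable families of abelian surfaces and, by now-standard arguments, the announced uniform bounds on torsion and on rational points of genus $2$ curves.
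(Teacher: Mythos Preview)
The statement is a \emph{conjecture}, and the paper does not claim to prove it; it is quoted as motivation, and the paper only cites the Hindry--Silverman partial result (Th\'eor\`eme~\ref{elliptique}) conditional on bounded Szpiro quotient. You recognize this correctly and offer instead a sketch of the strategy toward partial results, which is the right response. Your description of the elliptic-curve route --- local decomposition, Tate/$q$-parametrization, the Bernoulli-polynomial formula for $\widehat\lambda_v$, passage to a multiple to reach the identity component, and Szpiro as the missing conditional input --- is an accurate summary of the Hindry--Silverman mechanism the paper cites.

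Where your final paragraph tries to describe how the present paper extends this to dimension $2$, however, the picture is off in two places. First, the non-archimedean side is not handled via Raynaud uniformization at all: the paper works entirely with the Kummer-surface embedding of Flynn--Smart and the explicit duplication polynomials $\delta_i$, and the lower bound at finite places comes from Stoll's estimate $E_v(K)\ge|2^4\disc(F)|_v$ (Proposition~\ref{minoration finie}), not from any component-group analysis. Second, and more importantly, the device that replaces the ``pass to a multiple to reach generic position'' step is different from what you suggest: the paper's key idea is to compare the two normalizations of local heights (Flynn--Smart versus theta) by evaluating both at the $3$-torsion, which in genus $2$ lies entirely off $\Theta$ (Proposition~\ref{3-torsion} and the \'Egalit\'e Clef~\ref{clef}). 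The conditional hypothesis that emerges is therefore not a Szpiro-type bound but an archimedean one, $\Trinf(A)>\tfrac{5}{3}\log(\Nk(D)/s_\infty(A))$, coming from the competition between the theta lower bound and the vanishing of $\Delta(\tau)$ along $\tau_{12}=0$. Your sketch would benefit from replacing ``Raynaud uniformization'' by ``Kummer-surface duplication formulas'' and flagging the $3$-torsion comparison as the genuinely new ingredient.
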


On trouve des r\'esultats en direction de cet \'enonc\'e dans les travaux de J. Silverman \cite{Sil4} et \cite{Sil3}, M. Hindry et J. Silverman dans \cite{HiSi3} et de S. David dans \cite{Dav3}. Citons aussi M. Krir \cite{Krir} et C. Petsche \cite{Pet}. M. Hindry et J. Silverman obtiennent dans \cite{HiSi3}, corollaire 4.2 (ii) de leur th\'eor\`eme 4.1 (page 430 et 431), le r\'esultat suivant :

\begin{theorem}(Hindry, Silverman) \label{elliptique}
Soit $k$ un corps de nombres de degr\'e $d$. Soit $E/k$ une courbe elliptique de
disciminant minimal $\Delta_{E}$ et de conducteur $F_{E}$. On note $\sigma_{E}$ le
quotient de Szpiro d\'efini par $\sigma_{E} = \log \Nk(\Delta_{E})/\log \Nk(F_{E})$. Alors
pour tout point $P\in{E(k)}$ d'ordre infini on a la minoration :
\[\widehat{h}(P)\geq (20\sigma_{E})^{-8d}10^{-4\sigma_{E}}\frac{1}{12}\max\Big\{\log \Nk(\Delta_{E}),h(j_{E})
\Big\}.\]
\end{theorem}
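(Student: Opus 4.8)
The plan is to decompose the Néron--Tate height into canonical local heights, to estimate each local term — the archimedean ones producing the comparison with $h(j_E)$, the non-archimedean ones the comparison with $\log\Nk(\Delta_E)$ — and then to amplify these estimates by a counting argument over the multiples $P,2P,\dots,NP$, the whole scheme carried out after a base change to a field over which $E$ acquires semistable reduction, split multiplicative at every place of bad reduction.

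First I would normalize the canonical local heights $\widehat{\lambda}_v$ so that $\widehat{h}(Q)=\frac{1}{[K:\mathbb{Q}]}\sum_v [K_v:\mathbb{Q}_v]\,\widehat{\lambda}_v(Q)$ for all $Q\in E(K)$. At a finite place of good reduction $\widehat{\lambda}_v(Q)\ge 0$, so such places only help. At a finite place $v$ of split multiplicative reduction, Tate's uniformization $E(K_v)\simeq\overline{K}_v^{\times}/q_v^{\mathbb{Z}}$, $Q\leftrightarrow u$, gives, with $t=\ordv(u)/\ordv(q_v)\in[0,1)$,
$$\widehat{\lambda}_v(Q)=\tfrac12 B_2(t)\log|q_v|_v^{-1}+\log|1-u|_v,$$
where $B_2(X)=X^2-X+\tfrac16$ has minimum $-\tfrac1{12}$ on $[0,1]$ and the remaining terms of the defining series vanish non-archimedeanly; since $\ordv(q_v)=\ordv(\Delta_E)$ this is bounded below by $-\tfrac1{24}\ordv(\Delta_E)\log Nv$ plus the kernel-of-reduction term $\log|1-u|_v$, which is nonpositive and vanishes unless $Q$ lies in the formal group at $v$. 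The places of additive reduction are removed by base change, which is what produces the exponent depending on $d$.

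Second, at an archimedean place $v$ I would use $E(\mathbb{C})\simeq\mathbb{C}/(\mathbb{Z}+\tau_v\mathbb{Z})$, $Q\leftrightarrow z$, and the classical expression of $\widehat{\lambda}_v(Q)$ through $B_2(\Im z/\Im\tau_v)\log|q_{\tau_v}|_v^{-1}$ and the logarithm of a theta/Siegel function, together with $|\Delta(\tau_v)|=|q_{\tau_v}|\prod_{n\ge1}(1-q_{\tau_v}^n)^{24}$ and the standard comparison linking $h(j_E)$ to $\sum_v\log|q_{\tau_v}|_v^{-1}$ and the finite-place data. This yields a lower bound for $\widehat{h}(P)$ by a multiple of $h(j_E)$ when the archimedean positions of $P$ are bounded away from the lattices, and otherwise leaves a small, controlled negative term.

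The amplification is the crux, and the main obstacle. Summing the lower bounds above over all $v$ only gives $\widehat{h}(P)\ge -\tfrac1{24}\log\Nk(\Delta_E)+(\text{archimedean corrections})$, which is vacuous; instead I would bound $\sum_{m=1}^{N}\widehat{h}(mP)=\widehat{h}(P)\cdot\frac{N(N+1)(2N+1)}{6}$ place by place. At a bad place the quantity attached to $mP$ is $B_2(\{mt_v\})$, and the denominator $e_v$ of $t_v$ in lowest terms equals the order of $P$ in the component group $\mathbb{Z}/\ordv(\Delta_E)\mathbb{Z}$; the Bernoulli identity $\sum_{m=0}^{e_v-1}B_2(m/e_v)=\tfrac1{6e_v}$ then shows that the long-range average of $B_2(\{mt_v\})$ is essentially nonnegative unless $e_v$ is small, while the archimedean terms are handled by an equidistribution estimate for $m\mapsto \Im(mz_v)/\Im\tau_v\bmod 1$. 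Choosing $N$ of the right size — here the Szpiro quotient enters, through $\log\Nk(\Delta_E)\le\sigma_E\log\Nk(F_E)$, used to bound the number and size of the remaining ``resonant'' places and the accumulated $\log|1-u|_v$ and $\log|\theta|$ terms via the product formula — one obtains that $\widehat{h}(P)\,N^3$ dominates the total negative part, and unwinding the inequalities yields the explicit constant $(20\sigma_E)^{-8d}10^{-4\sigma_E}\cdot\tfrac1{12}$. The two genuinely delicate points are: controlling the accumulated kernel-of-reduction and theta contributions \emph{globally} rather than place by place, which forces one to combine the product formula with a Liouville-type lower bound since the relevant $u$ and $z$ are algebraic of bounded degree; and the bookkeeping of the base-change degree and of the $\sigma_E$-dependence needed to reach the stated constants.
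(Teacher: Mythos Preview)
The paper does not contain a proof of this statement: Theorem~\ref{elliptique} is quoted from Hindry and Silverman \cite{HiSi3} (corollaire~4.2~(ii) of their th\'eor\`eme~4.1) as background in the introduction, and the present article neither reproves it nor sketches its argument. There is therefore no ``paper's own proof'' to compare your proposal against.

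That said, your outline is a fair description of the actual Hindry--Silverman strategy in \cite{HiSi3}: base change to obtain semistable reduction everywhere, local canonical heights expressed via $B_2$ at the multiplicative places through Tate uniformization, archimedean local heights via theta/Siegel expansions, and an averaging over the multiples $mP$ for $1\le m\le N$ that exploits the identity $\sum_{m=1}^{N}\widehat{h}(mP)=\widehat{h}(P)\sum m^2$. One point where your sketch diverges from what they actually do: the ``amplification'' step is not a straight sum over $m=1,\dots,N$ but a pigeonhole argument selecting a \emph{single} good multiple $nP$ (with $n\le N$) whose local positions are simultaneously close to the origin at all archimedean places and in a controlled component at all bad places; this is what makes the $B_2$ term genuinely positive rather than merely nonnegative on average, and it is also how the exponent in $N$ (hence the $(20\sigma_E)^{-8d}$ and $10^{-4\sigma_E}$) arises. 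Your description of a Liouville-type bound for the accumulated $\log|1-u|_v$ terms is not part of their argument; the control there is purely combinatorial via the component-group structure and the Szpiro ratio.
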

Une conjecture de Szpiro, \'equivalente \`a une forme de la conjecture ABC, affirme que $\sigma_{E}$ est uniform\'ement born\'e et entra\^ine donc la conjecture de Lang via ce th\'eor\`eme de Hindry et Silverman. La conjecture sur les courbes elliptiques a ensuite \'et\'e g{\'e}n{\'e}ralis{\'e}e aux vari{\'e}t{\'e}s ab{\'e}liennes de dimension sup{\'e}rieure par J. Silverman dans \cite{Sil3} page 396  :

\begin{Conjecture}(Lang, Silverman)
Soit $g \geq 1$. Pour tout corps de nombres $k$, il existe une
constante strictement positive $c(k,g)$ telle que pour toute vari{\'e}t{\'e} ab{\'e}lienne
$A/k$ de dimension $g$, pour tout diviseur ample et sym\'etrique $\mathcal{D}\in{\Divi(A)}$ et
tout point $P\in{A(k)}$ tel que $\mathbb{Z}\!\cdot\! P=\{mP|m\in{\mathbb{Z}}\}$ est Zariski-dense on ait :
$$\widehat{h}_{A,\mathcal{D}}(P) \geq c(k,g)\, \max\Big\{1,\hF(A/k)\Big\}, $$
o{\`u} $\widehat{h}_{A,\mathcal{D}}(.)$ est la hauteur de N{\'e}ron-Tate sur $A$ associ{\'e}e au
diviseur $\mathcal{D}$ et $\hF(A/k)$ est la hauteur de Faltings (relative) de la vari{\'e}t{\'e}
ab{\'e}lienne $A$.
\end{Conjecture}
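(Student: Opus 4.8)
La méthode naturelle, suivie dans toute la suite de ce texte, consiste à décomposer la hauteur de Néron-Tate en hauteurs locales et à comparer, place par place, cette décomposition à celle de la hauteur de Faltings. On écrit $\widehat{h}_{A,\mathcal{D}}(P)=\sum_{v}\widehat{\lambda}_{v}(P)$, où $v$ parcourt les places de $k$ et $\widehat{\lambda}_{v}$ est la hauteur locale de Néron attachée à $\mathcal{D}$, normalisée à l'aide du modèle de Néron de $A$ en $v$ ; de son côté $\hF(A/k)$ s'écrit comme une somme de contributions archimédiennes (essentiellement $-\log\|\omega\|_{v}$ pour $\omega$ une forme différentielle invariante de degré maximal non nulle, autrement dit le logarithme du covolume du réseau des périodes) et de contributions non archimédiennes mesurant le défaut de réduction semi-stable et, en réduction semi-stable, la valuation du discriminant. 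L'objectif est de montrer que $\sum_{v}\widehat{\lambda}_{v}(P)$ domine $\hF(A/k)$ à une constante ne dépendant que de $k$ et $g$ près, en majorant place par place la contribution de $v$ à $\hF(A/k)$ par $\widehat{\lambda}_{v}(P)$ à un terme contrôlé près, pourvu que $\mathbb{Z}\cdot P$ soit Zariski-dense.

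On procéderait en trois temps. \emph{Aux places archimédiennes}, lorsqu'une période dégénère, on estime $\widehat{\lambda}_{v}(P)$ via la théorie des fonctions thêta sur le tore $\mathbb{C}^{g}/\Lambda_{v}$ : une analyse de $\theta$ au voisinage du bord de l'espace de modules, jointe au fait que la densité de Zariski de $\mathbb{Z}\cdot P$ interdit à $P$ de se concentrer sur une sous-variété abélienne stricte, fournit la minoration voulue de la contribution archimédienne. \emph{Aux places de mauvaise réduction}, on remplace d'abord $A$ par sa réduction semi-stable sur une extension de degré borné en fonction de $g$ seulement (ajout d'une structure de niveau), puis on fait intervenir l'accouplement de monodromie : il définit une forme quadratique entière $\langle\cdot,\cdot\rangle_{v}$ sur le groupe des composantes, et la hauteur locale $\widehat{\lambda}_{v}(P)$ s'exprime, pour la partie toroïdale de la dégénérescence, au moyen de $\langle\bar{P},\bar{P}\rangle_{v}\log\mathrm{N}v$, quantité strictement positive dès que l'image de $P$ dans le groupe des composantes est non triviale. \emph{Enfin}, on recolle : si $P$ est d'ordre infini avec $\mathbb{Z}\cdot P$ Zariski-dense, au moins une place contribue strictement et, en sommant, on obtient $\widehat{h}_{A,\mathcal{D}}(P)\gg_{k,g}\max\{1,\hF(A/k)\}$.

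Le point dur --- et l'obstruction effective à une preuve inconditionnelle de l'énoncé tel qu'il est formulé --- se situe dans le recollement aux places de mauvaise réduction. L'accouplement de monodromie ne contrôle que la partie \emph{toroïdale} de la dégénérescence : pour un point $P$ d'ordre infini, on ne sait garantir en $v$ qu'une minoration du type $\widehat{\lambda}_{v}(P)\gg\log\mathrm{N}v$, comparable à la contribution de $v$ au conducteur $\Nk(F_{A})$, alors que la contribution de $v$ à $\hF(A/k)$ est comparable à la valuation du discriminant en $v$. Combler uniformément l'écart entre conducteur et discriminant en dimension $g$ revient à une forme de la conjecture de Szpiro, donc à ABC. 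Faute de disposer de celle-ci, la méthode ne livre qu'une minoration conditionnelle $\widehat{h}_{A,\mathcal{D}}(P)\geq c(k,g)\,\phi(\sigma_{A})^{-1}\max\{1,\hF(A/k)\}$, où $\phi$ est explicite et $\sigma_{A}$ un quotient de Szpiro attaché à $A$, dans l'esprit exact du théorème \ref{elliptique} en dimension $1$ ; des résultats partiels en dimension $2$, inconditionnels sous hypothèses sur le type de réduction et conditionnels à une borne de type Szpiro dans le cas général, font l'objet des sections suivantes. La conjecture entière paraît hors de portée des méthodes actuelles sans un apport venant de ABC, ou bien une idée de nature globale (argument d'équidistribution à la Bogomolov, estimations effectives d'isogénies) permettant de contourner l'uniformité en le type de réduction.
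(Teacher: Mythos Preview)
L'\'enonc\'e en question est une \emph{conjecture} ; le texte ne la d\'emontre pas et ne pr\'etend pas le faire. Vous l'avez bien vu : votre texte est explicitement une \og esquisse de strat\'egie\fg{} qui conclut que la conjecture est hors de port\'ee sans ABC. Sur ce point fondamental vous \^etes en accord avec l'article.

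Il vaut n\'eanmoins la peine de comparer votre esquisse g\'en\'erale \`a l'approche que le texte met effectivement en \oe uvre pour ses r\'esultats \emph{partiels} en dimension~$2$. Les deux reposent sur une d\'ecomposition locale de la hauteur de N\'eron-Tate, mais les choix techniques diff\`erent nettement. Aux places finies, vous proposez l'accouplement de monodromie sur le groupe des composantes apr\`es r\'eduction semi-stable ; l'article, lui, travaille directement sur la surface de Kummer plong\'ee dans $\mathbb{P}^{3}$ et utilise les formules de duplication explicites de Flynn--Smart et les bornes de Stoll sur $E_{v}(K_{P})$, ce qui \'evite toute discussion de r\'eduction semi-stable mais restreint au genre~$2$ avec point de Weierstrass rationnel. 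Aux places archim\'ediennes, vous \'evoquez le comportement des fonctions th\^eta au bord de l'espace de modules ; l'article isole une quantit\'e nouvelle, la \emph{simplicit\'e archim\'edienne} $s_{\infty}(A)=\prod_{v}|\tau_{12,v}|^{d_{v}}$, qui mesure la distance au lieu des produits de courbes elliptiques, et obtient une \'egalit\'e-clef (\ref{clef}) via une moyenne sur les points d'ordre~$3$ (lesquels \'evitent $\Theta$ par Boxall--Grant). Enfin, votre condition r\'esiduelle est un quotient de Szpiro $\sigma_{A}$ contr\^olant l'\'ecart conducteur/discriminant ; la condition du texte est purement archim\'edienne, du type $\Trinf(A)\geq (5/3+\varepsilon)\log\big(\Nk(D)/s_{\infty}(A)\big)$, et ne fait pas intervenir de conducteur.

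En r\'esum\'e : pas d'erreur de votre c\^ot\'e, mais votre sch\'ema conceptuel (monodromie, Szpiro) est celui de la dimension~$1$ \`a la Hindry--Silverman, tandis que le texte suit une voie plus calculatoire, sp\'ecifique \`a la dimension~$2$, qui \'echange l'obstruction de type Szpiro contre une obstruction archim\'edienne li\'ee \`a la d\'eg\'en\'erescence vers un produit de courbes elliptiques.
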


\begin{Remarque}
Il y a plusieurs notions de hauteur d'une vari\'et\'e ab\'elienne $A$. L'\'enonc\'e de cette conjecture est plus fin avec la hauteur de Faltings \emph{relative} $\hF(A/k)$ comme minorant qu'avec la hauteur de Faltings \emph{stable} $\hst(A)$. Rappelons de plus que la hauteur de Faltings stable est comparable \`a une hauteur modulaire (voir \cite{Falt}, \cite{DavPhi} ou \cite{Paz2}), comme par exemple la hauteur th\^eta $h_{\Theta}(A)$, v\'erifiant $|\hF(A)-2h_{\Theta}(A)|\ll\log h_{\Theta}(A)$, o\`u la constante implicite d\'epend seulement de la dimension $g$ et d'un niveau de torsion $r$ fix\'e (on prendra en g\'en\'eral $r=4$). Dans ce texte on utilisera aussi la hauteur de Faltings modif\'ee relative (d\'efinie plus bas par la formule (\ref{hauteur Faltings modifi\'ee}), voir aussi \cite{Paz2}) not\'ee $\hFprime(A/k)$, et son avatar stable not\'ee $\hstprime(A)$, qui v\'erifie $|\hstprime(A)-2h_{\Theta}(A)|\ll 1$ (o\`u la constante implicite d\'epend seulement de la dimension $g$ et d'un niveau de torsion $r$ fix\'e).
\end{Remarque}

S. David a propos\'e dans \cite{Dav2} une preuve partielle de cette conjecture
g{\'e}n{\'e}ralis{\'e}e, preuve bas\'ee sur un raisonnement de type transcendance : il donne une borne inf{\'e}rieure pouvant tendre vers l'infini avec la hauteur th\^eta de la vari{\'e}t{\'e}. 

\begin{theorem}(David)
Soient $g\geq 1$ un entier, $k$ un corps de nombres, $v$ une place archim\'edienne, $(A,\mathcal{D})/k$ une vari\'et\'e ab\'elienne principalement polaris\'ee de dimension $g$ et $\tau_{v}$ une matrice du domaine de Siegel (voir paragraphe \ref{domaine archim}) telle que $A(\bar{k}_{v})\cong \mathbb{C}^{g}/\mathbb{Z}^{g}+\tau_{v}\mathbb{Z}^{g}$. On note $\parallel\Ima\tau_{v}\parallel=\max_{i,j} |\Ima\tau_{v,ij}|$. Posons~: $\rho(A)=h_{\Theta}(A)/\parallel\Ima\tau_{v}\parallel.$

Alors il existe une constante $c_{1}(k,g)>0$ telle que, tout point $P\in{A(k)}$ v\'erifiant que $\mathbb{Z}.P$ est Zariski-dense, on a~:
\[
 \widehat{h}_{A,\mathcal{D}}(P)\geq c_{1}(k,g)\rho(A)^{-4g-2}\Big(\log\rho(A)\Big)^{-4g-1}\,h_{\Theta}(A).
\]
\end{theorem}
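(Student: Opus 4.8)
Le plan est de proc\'eder par une m\'ethode de transcendance du type Masser--W\"ustholz--Philippon : construction d'une fonction auxiliaire, extrapolation analytique, puis lemme de z\'eros, l'hypoth\`ese de densit\'e Zariski de $\mathbb{Z}\!\cdot\! P$ jouant le r\^ole d\'ecisif \`a la derni\`ere \'etape. On raisonne par l'absurde, en supposant que $\widehat{h}_{A,\mathcal{D}}(P)$ est strictement inf\'erieur au minorant annonc\'e, et l'on cherche une contradiction. On fixe un plongement projectif de $A$ donn\'e par les sections d'une puissance fixe $L=\mathcal{D}^{\otimes m_{0}}$ (par exemple $m_{0}=4$, via le plongement th\^eta de niveau $4$), de sorte que la hauteur des coordonn\'ees d'un point soit compar\'ee \`a $\widehat{h}_{A,\mathcal{D}}$ et la hauteur des \'equations d\'efinissant $A$ \`a $h_{\Theta}(A)$. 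On rel\`eve $P$ en $z_{P}\in\mathbb{C}^{g}$ par l'application exponentielle $\mathbb{C}^{g}\to A(\bar{k}_{v})$ ; l'hypoth\`ese que $\widehat{h}_{A,\mathcal{D}}(P)$ est petit devant $h_{\Theta}(A)$, combin\'ee \`a la d\'ecomposition de la hauteur en hauteurs locales, contraint la position analytique des multiples $mP$ (pour $m$ jusqu'\`a un param\`etre $S$) au plongement $v$, et c'est ce contr\^ole qui alimentera les majorations de l'\'etape d'extrapolation.

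Vient ensuite l'\'etape centrale. J'appliquerais un lemme de Siegel pour produire une section non nulle $F\in H^{0}(A,L^{\otimes D})$, \`a coordonn\'ees enti\`eres de taille contr\^ol\'ee en fonction de $D$ et de $h_{\Theta}(A)$, s'annulant \`a un ordre $\geq T$ \`a l'origine le long de la droite complexe $\mathbb{C}\!\cdot\! z_{P}$. Le nombre d'inconnues \'etant de l'ordre de $D^{g}$ et le nombre d'\'equations lin\'eaires de l'ordre de $T$, le choix $D^{g}\gg T$ assure l'existence de $F$. C'est dans le calibrage pr\'ecis de $D$ et $T$ qu'intervient $\parallel\Ima\tau_{v}\parallel$ : les majorations analytiques des fonctions th\^eta et de leurs d\'eriv\'ees sur des boules de $\mathbb{C}^{g}$ font intervenir des facteurs exponentiels gouvern\'es par $\parallel\Ima\tau_{v}\parallel$, ce qui oblige \`a choisir $D$ et $T$ sous forme de puissances convenables de $\rho(A)=h_{\Theta}(A)/\parallel\Ima\tau_{v}\parallel$ (\`a des facteurs logarithmiques pr\`es), et c'est cette optimisation qui produit les exposants $-4g-2$ et $-4g-1$.

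On extrapole alors. En posant $\Phi(w)=F(\exp(w\,z_{P}))$, fonction holomorphe d'une variable s'annulant \`a l'ordre $T$ en $w=0$, un lemme de Schwarz sur $\mathbb{C}$ combin\'e \`a la quasi-p\'eriodicit\'e des th\^eta majore $|\Phi(m)|_{v}$ pour $m=1,\dots,S$ par une quantit\'e tr\`es petite. D'autre part, $\Phi(m)$ est (\`a normalisation pr\`es) un nombre alg\'ebrique de $k$, valeur d'une section de $L^{\otimes D}$ en un point $k$-rationnel, dont le d\'enominateur et la maison sont born\'es par $c\,D\big(\widehat{h}_{A,\mathcal{D}}(P)+h_{\Theta}(A)\big)$. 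L'in\'egalit\'e de Liouville minore $|\Phi(m)|_{v}$ d\`es que $\Phi(m)\neq 0$ ; si le minorant suppos\'e faux est assez petit, la majoration analytique bat cette minoration et force $\Phi(m)=0$ pour $m=1,\dots,S$, donc $F$ s'annule en $P,2P,\dots,SP$ (et l'on peut it\'erer pour gagner de l'ordre d'annulation le long de $\mathbb{C}\!\cdot\! z_{P}$ en ces points).

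On conclut enfin par un lemme de z\'eros de Philippon : une section non nulle de $L^{\otimes D}$ ne peut s'annuler \`a l'ordre voulu le long de $\mathbb{C}\!\cdot\! z_{P}$ en $S$ points $mP$ distincts d\`es que $S$ d\'epasse une quantit\'e polynomiale en $D$, $T$ et le degr\'e de $A$, sauf si les points $P,2P,\dots,SP$ sont tous contenus dans une translat\'ee d'un sous-groupe alg\'ebrique propre de $A$ ; or l'hypoth\`ese que $\mathbb{Z}\!\cdot\! P$ est Zariski-dense l'interdit. On obtient ainsi la contradiction cherch\'ee, d'o\`u la minoration annonc\'ee. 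L'obstacle principal est l'\'etape de construction : il faut mener les estimations analytiques sur les th\^eta en gardant explicite la d\'ependance en $\tau_{v}$, puis ajuster simultan\'ement $D$, $T$ et $S$ pour satisfaire les contraintes du lemme de Siegel, du lemme de Schwarz et du lemme de z\'eros, ce jeu d'\'equilibrage d\'eterminant la forme exacte du minorant.
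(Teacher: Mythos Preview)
Ce th\'eor\`eme n'est pas d\'emontr\'e dans l'article : il s'agit d'un r\'esultat de S.~David cit\'e depuis \cite{Dav2} dans l'introduction, \`a titre de contexte. L'article se contente d'indiquer que la preuve de David est \og bas\'ee sur un raisonnement de type transcendance\fg, sans donner aucun d\'etail suppl\'ementaire. Il n'y a donc pas de \og preuve de l'article\fg{} \`a laquelle confronter votre proposition.

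Cela dit, votre esquisse est coh\'erente avec cette indication et refl\`ete fid\`element l'architecture g\'en\'erale de la d\'emonstration originale de David : construction d'une fonction auxiliaire par un lemme de Siegel dans un espace de sections th\^eta, extrapolation par un lemme de Schwarz combin\'e \`a l'in\'egalit\'e de Liouville, puis conclusion via un lemme de z\'eros \`a la Philippon exploitant la densit\'e Zariski de $\mathbb{Z}\!\cdot\! P$. Vous identifiez correctement que la d\'ependance en $\parallel\Ima\tau_{v}\parallel$ provient des estimations analytiques des fonctions th\^eta, et que les exposants $-4g-2$ et $-4g-1$ r\'esultent de l'optimisation simultan\'ee des param\`etres $D$, $T$, $S$. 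Pour un plan de lecture ou une reconstitution autonome, votre esquisse est donc ad\'equate ; pour une v\'erification pr\'ecise des constantes et des exposants, il faudrait retourner \`a \cite{Dav2} directement, puisque le pr\'esent article ne fournit rien de plus.
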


Cet \'enonc\'e implique donc l'in\'egalit\'e cherch\'ee (sur un corps o\`u la r\'eduction est semi-stable) pour les familles de vari\'et\'es ab\'eliennes v\'erifiant $\rho(A)$ born\'e. D. Masser utilise d'ailleurs ces r{\'e}sultats dans \cite{Masser} pour exhiber une famille de vari\'et\'es ab\'eliennes simples
avec $\rho$ born\'e uniform\'ement.

En application, on donnera des r\'esultats en direction de deux conjectures classiques dont on rappelle les \'enonc\'es ici :

\begin{Conjecture}(de torsion forte)
Soient $k$ un corps de nombres de degr\'e $d$ et $g\geq 1$ un entier. Alors il existe une constante $c(d,g)>0$ ne d\'ependant que de $d$ et $g$ telle que pour toute vari\'et\'e ab\'elienne $A$ de dimension $g$ d\'efinie sur $k$ on a :
$$\Card A(k)_{\mathrm{tors}}\leq c(d,g).$$
\end{Conjecture}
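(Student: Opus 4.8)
The plan is to derive the strong torsion conjecture for abelian surfaces from the Lang--Silverman lower bound (in the partial form established in this paper), via a classical counting argument combined with estimates on torsion points at archimedean places. First I would recall that a torsion point $P\in A(k)_{\mathrm{tors}}$ has $\widehat{h}_{A,\mathcal{D}}(P)=0$, so it contributes nothing to the Lang--Silverman bound directly; instead one must use that torsion points are very numerous and hence cannot all avoid being ``small'' in a metric sense. The strategy is therefore: choose an archimedean place $v$ of $k$, so that $A(\bar{k}_v)\cong \mathbb{C}^g/(\mathbb{Z}^g+\tau_v\mathbb{Z}^g)$ with $\tau_v$ in the Siegel domain; an $N$-torsion point is represented by a vector $\frac{1}{N}(\mathbf{a}+\tau_v\mathbf{b})$ with $\mathbf{a},\mathbf{b}\in(\mathbb{Z}/N\mathbb{Z})^g$. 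The metric (e.g. the distance to $0$ coming from the Riemann form, or equivalently the local Néron function at $v$) of such a point is controlled from below in terms of $\|\mathrm{Im}\,\tau_v\|$ and the denominators, and this forces two distinct torsion points to be metrically separated unless the height of the variety is large.

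Next I would make the link to $\hF(A/k)$. By the Remarque above, $\hF(A/k)$ is (up to the usual $\ll\log h_\Theta$ terms) comparable to $2h_\Theta(A)$, and $h_\Theta(A)$ in turn controls $\|\mathrm{Im}\,\tau_v\|$ from below on a fundamental domain — roughly $h_\Theta(A)\gg \log\|\mathrm{Im}\,\tau_v\|$ up to bounded error, while $\|\mathrm{Im}\,\tau_v\|\gg 1$ (say $\geq \sqrt{3}/2$) on the Siegel domain. The key dichotomy is then: either $\hF(A/k)$ is bounded by an explicit constant $c_0(d)$, in which case $A$ lies in a family of bounded Faltings height, so by Faltings' finiteness (the Shafarevich-type statement, or the fact that bounded Faltings height and bounded degree of the field of definition give finitely many isomorphism classes) there are only finitely many such $A/k$ and each has finite torsion, yielding a uniform bound trivially on that part; or $\hF(A/k)$ is large, and then we invoke the Lang--Silverman estimate to produce a nonzero point of controlled small height, but since \emph{all} torsion is of height zero we instead use the archimedean separation: the number of $N$-torsion points that can be squeezed into a ball of radius $r$ around $0$ is bounded by the volume, roughly $(\text{vol of fundamental domain}/r^{2g})$, and combining with a pigeonhole over a covering of $A(\bar{k}_v)$ by such balls forces $N^{2g}=\Card A(k)[N]$ to be bounded in terms of $d$, $g$, and a quantity that the Lang--Silverman bound keeps under control.

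More precisely, the counting core is: every $k$-rational torsion point of order dividing $N$ is an algebraic point, and by Galois-equivariance the full $k$-rational $N$-torsion subgroup (of size $N^{2g}$ if it is all rational) embeds in the real torus; a Minkowski/pigeonhole argument gives two distinct such points $P_1\neq P_2$ with $\widehat{h}_{A,\mathcal{D}}(P_1-P_2)$ small relative to $\hF(A/k)$ — but $P_1-P_2$ is again torsion, hence Zariski-dense in $\mathbb{Z}\cdot(P_1-P_2)=\{0,\pm(P_1-P_2),\dots\}$ which is a \emph{finite} set, so the Zariski-density hypothesis of Lang--Silverman fails and we get no contradiction that way. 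The honest route, which I would follow, is to run the counting purely at $v$: bound the number of torsion points of order $\leq N$ inside the region $\{z: \|z\|_{\mathrm{Im}\,\tau_v}\leq \varepsilon\}$ by a lattice-point count $\ll (\varepsilon N)^{2g} + 1$, cover the torus by $\ll (\det\mathrm{Im}\,\tau_v)\varepsilon^{-2g}$ such regions, and observe that the number of torsion points of order $\le N$ defined over $k$ is, by a separate and standard bound (Masser, or the theory of the local canonical height at $v$, or even the Néron--Ogg--Shafarevich criterion at finite places combined with Raynaud-type estimates), at least a fixed power of $N$; matching the two forces $N\ll_{d,g} \det(\mathrm{Im}\,\tau_v)^{?}\ll_{d,g} \exp(O(h_\Theta(A)))\ll_{d,g}\exp(O(\hF(A/k)))$, and then the large-height branch is precisely where the main theorem of this paper bounds $\hF(A/k)$ in terms of the canonical height of a Zariski-dense point — so one picks such a point $Q$ of infinite order (which exists when the Mordell--Weil rank is positive; if the rank is $0$ the torsion subgroup \emph{is} the whole of $A(k)$ and a direct argument applies), applies Lang--Silverman to get $\hF(A/k)\ll \widehat{h}(Q)$, and still needs to bound $\widehat{h}(Q)$ — which one cannot, so the real content must be that the counting bound $N\ll_{d,g}\exp(O(\hF(A/k)))$ is replaced by one \emph{not} involving $\hF$ at all.

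The hard part, which I expect to be the main obstacle, is exactly this last point: making the torsion count at the archimedean place genuinely uniform, i.e. bounding $\Card A(k)_{\mathrm{tors}}$ in terms of $d$ and $g$ \emph{alone}, without it secretly depending on the Faltings height of $A$. This is where the Lang--Silverman inequality must enter in an essential, non-circular way — presumably one argues by contradiction: if $\Card A(k)_{\mathrm{tors}}$ were unbounded along a sequence $A_n/k$, one extracts (after a finite base change keeping the degree bounded) points whose archimedean behaviour degenerates, forcing $\hF(A_n/k)\to\infty$; one then exhibits on each $A_n$ a Zariski-dense point (or passes to the Jacobian picture of genus-$2$ curves where such points come from the curve) whose Néron--Tate height is bounded \emph{independently of $n$} — for instance an image of a fixed rational point, or a point arising from a bounded-height construction — and Lang--Silverman then says $\hF(A_n/k)$ is bounded, a contradiction. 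Isolating a clean, geometric source of such uniformly-bounded-height Zariski-dense points on the relevant families (and handling the rank-zero case, where one falls back on Néron's local height lower bounds at a finite place of bad reduction à la Hindry--Silverman's argument for Theorem~\ref{elliptique}) is the delicate step; the rest — the lattice-point counting, the comparison $\hF\asymp 2h_\Theta$, the reduction from general $\mathcal{D}$ to a principal polarization by pulling back along an isogeny of bounded degree — is routine bookkeeping.
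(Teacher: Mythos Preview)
The statement you are trying to prove is a \emph{conjecture}, not a theorem: the paper does not claim to prove it, and indeed it is the celebrated uniform boundedness conjecture for torsion on abelian varieties, known in full only for $g=1$ (Merel) and wide open for $g\geq 2$. The paper merely states it as motivation and then proves a partial result in its direction, namely Corollaire~\ref{borne torsion}, which gives a uniform torsion bound for Jacobians of genus-2 curves satisfying the extra archimedean hypothesis $\Trinf(A)>\tfrac{5}{3}\log(\Nk(D)/\sinf(A))$.

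Your own write-up essentially discovers this obstruction in real time. Each of the routes you sketch either becomes circular (bounding torsion in terms of $\hF(A/k)$, then trying to bound $\hF(A/k)$ via Lang--Silverman applied to a point whose height you cannot control), or invokes an unproved step (producing, on an arbitrary abelian surface, a Zariski-dense rational point of height bounded independently of the variety --- this is not known and would itself be a major theorem). The pigeonhole/lattice counting at an archimedean place gives bounds depending on $\det(\Ima\tau_v)$, hence on the height of $A$, which is exactly the dependence one cannot remove with current technology. The honest conclusion is that no proof along these lines exists; the correct response to this ``statement'' is to note that it is conjectural and to point to the conditional corollary the paper actually proves.
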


\begin{Conjecture}(points rationnels)
Soient $k$ un corps de nombres de degr\'e $d$ et $g\geq 2$ un entier. Alors il existe une constante $c(k,g)>0$ ne d\'ependant que de $k$ et $g$ telle que pour toute courbe $C$ de genre $g$ d\'efinie sur $k$ on a :
$$\Card C(k)\leq c(k,g)^{\mathrm{rang}_{k}(\Jac(C))+1},$$
o\`u $\Jac(C)$ d\'esigne la vari\'et\'e jacobienne de $C$.
\end{Conjecture}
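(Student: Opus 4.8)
The plan is to derive this conjecture from the Lang--Silverman conjecture in dimension $g$ (supplemented, for the residual degenerate points, by the strong torsion conjecture), the engine being Vojta's method in the explicit form due to R\'emond, with the Lang--Silverman lower bound used precisely to erase the dependence of the counting constant on the curve. One first reduces to the case $\hF(\Jac(C)/k)$ large: by the Northcott property of the Faltings height there are, over a fixed $k$, only finitely many abelian varieties of dimension $g$ with bounded Faltings height, hence only finitely many curves $C$ of genus $g$ over $k$ whose Jacobian has bounded Faltings height, for which the asserted bound is vacuous. Then fix a base point (or a correspondence) to obtain an Abel--Jacobi embedding $j\colon C\hookrightarrow J:=\Jac(C)$ with $j^{*}(2\Theta)$ symmetric and ample, write $\widehat{h}=\widehat{h}_{J,2\Theta}$ for the associated N\'eron--Tate height and $\langle\cdot,\cdot\rangle$ for the induced quadratic form on the Mordell--Weil lattice $J(k)$, of rank $r=\mathrm{rang}_{k}(J)$, and split $C(k)=S\sqcup L$ with $S=\{P\in C(k):\widehat{h}(j(P))\le\lambda(g)\max\{1,\hF(J/k)\}\}$.

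To bound $\#L$ one runs the Mumford/Vojta/R\'emond machine: for distinct $P,Q\in C(k)$ one has Mumford's gap inequality, in a form such as $\langle j(P),j(Q)\rangle\le\tfrac14\bigl(\widehat{h}(j(P))+\widehat{h}(j(Q))\bigr)+\kappa(C)$, sharpened for points of large height to $\langle j(P),j(Q)\rangle\le\varepsilon\bigl(\widehat{h}(j(P))+\widehat{h}(j(Q))\bigr)+\kappa_{\varepsilon}(C)$ for every $\varepsilon>0$; covering the unit sphere of $J(k)\otimes\mathbb{R}$ by $c_{1}(g)^{r}$ cones of small angular radius, the heights of the points of $L$ lying in one cone, once ordered, grow at least geometrically, and with an upper control this bounds the per-cone count by $c_{2}(g)$, so that $\#L\le c_{2}(g)\,c_{1}(g)^{r}\le c_{3}(g)^{r+1}$. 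The curve-dependent quantities here --- the Mumford constant $\kappa(C)$ and the small/large threshold --- must be bounded by $c(g)\max\{1,\hF(J/k)\}$, via an effective comparison between the Arakelov invariants of $(C,j^{*}\Theta)$ and the Faltings height of $J$; after enlarging $\lambda(g)$ accordingly, this step becomes genuinely uniform.

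To bound $\#S$ one invokes Lang--Silverman. Choosing $\lambda(g)$ strictly below the Lang--Silverman constant $c(k,g)$, every $P\in S$ has $\widehat{h}(j(P))<c(k,g)\max\{1,\hF(J/k)\}$, hence $\mathbb{Z}\cdot j(P)$ is not Zariski-dense: either $j(P)$ is torsion --- and such points are at most $c(d,g)$ in number by the strong torsion conjecture --- or $j(P)$ lies in a translate of a proper abelian subvariety $B\subsetneq J$. In the latter case, $j(C)$ not being contained in any translate of $B$ (it generates $J$), $C\cap(B+j(P))$ is finite, of cardinality bounded by $c(g)$ through the geometric intersection bound for abelian subvarieties of a principally polarized abelian variety; organizing the $P\in S$ by their minimal abelian hull and descending by induction on $\dim J$ --- using Faltings' subvariety and isogeny estimates to bound $\hF(B/k)$ in terms of $\hF(J/k)$ up to a constant depending only on $g$, and $\mathrm{rang}_{k}(B)\le r$ --- yields $\#S\le c_{4}(d,g)^{r+1}$. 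For $g=2$ the abstract induction can be replaced by a direct analysis: $\Jac(C)$ is a principally polarized abelian surface, either simple (then $S$ consists of torsion points only) or isogenous to a product of elliptic curves (and the degenerate points are controlled through Theorem \ref{elliptique} and its torsion corollary).

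Adding the two estimates gives $\#C(k)=\#S+\#L\le c(k,g)^{r+1}$ after absorbing constants. Modulo Lang--Silverman the main obstacle is the curve-uniformity in the second step: one must know that the Mumford gap constant and the counting threshold are dominated by $c(g)\max\{1,\hF(J/k)\}$ and that the sphere-covering retains the shape $c(g)^{r}$; it is exactly the Lang--Silverman lower bound that then converts R\'emond's curve-dependent constant into the uniform $c(k,g)^{r+1}$ demanded by the conjecture. The deeper obstacle, of course, is Lang--Silverman itself, which is the subject of the present paper, and which --- in dimension $g=2$, under the relevant Szpiro-type hypothesis --- makes the conjecture unconditional for the corresponding families of genus-$2$ curves.
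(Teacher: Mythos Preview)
The statement is presented in the paper as an open \emph{conjecture}; the paper does not prove it and does not claim to. What the paper does establish is the conditional special case Corollaire~\ref{points rationnels} (genus~$2$, under the archimedean hypothesis on $\Trinf(A)$), and the argument given there consists of a single sentence: apply Proposition~1.10 of \cite{Paz2}, which packages exactly the R\'emond/de~Diego counting machinery you describe and feeds into it the Lang--Silverman-type lower bound obtained in Th\'eor\`eme~\ref{minoration dimension 2}. So your proposal is not a proof of the conjecture but a (largely correct) outline of the folklore reduction ``Lang--Silverman $+$ strong torsion $\Rightarrow$ uniform rational-points bound''---which is precisely the reduction the paper is exploiting in its special case, with \cite{Paz2} doing the work you attribute to Mumford/Vojta/R\'emond.

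As a sketch of that reduction your argument is on the right track, but several steps are loose. The ``vacuous'' Northcott reduction still requires Mordell (Faltings) to know that each of the finitely many exceptional curves has finite $C(k)$ before absorbing them into $c(k,g)$. Your threshold $\lambda$ must depend on $k$, not just $g$, since the Lang--Silverman constant does; and you must verify that the Mumford/R\'emond constant $\kappa(C)\le c(g)\hF(J/k)$ is compatible with choosing $\lambda(k,g)$ below the Lang--Silverman constant---this compatibility is the content of the comparisons in \cite{DavPhi}, \cite{Rem2}, \cite{DeDiego} that the paper cites alongside \cite{Paz2}. Finally, your handling of the non-torsion degenerate points in $S$ (those lying in a translate of a proper abelian subvariety) via an induction on $\dim J$ using ``Faltings' subvariety and isogeny estimates'' is the vaguest part of the sketch; in genus~$2$ the paper sidesteps this entirely, since for a geometrically simple Jacobian there are no such points, and the non-simple case is treated separately through the product-of-elliptic-curves analysis.
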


\subsection{R\'esultats}

Une vari\'et\'e ab\'elienne principalement polaris\'ee de dimension 2 est isomorphe ou bien \`a une jacobienne d'une courbe $C$ de genre 2 polaris\'ee par le diviseur $\Theta= C$, ou bien \`a un produit de courbes elliptiques $E_{1}\!\times\! E_{2}$, polaris\'e par $\Theta= E_{1}\!\times\!\{O\}\!+\!\{O\}\!\times\! E_{2}$.

On obtient dans cet article un th\'eor\`eme de minoration de la hauteur de N\'eron-Tate associ\'ee au diviseur $\Theta$ en utilisant une technique de d\'ecomposition en hauteurs locales l\'eg\`erement modifi\'ees. En effet ces hauteurs locales sont d\'efinies \`a une constante additive pr\`es, il y a donc plusieurs mani\`eres de normaliser ces fonctions. On met en place une \'etude des diff\'erences de hauteurs locales (c'est une mani\`ere d\'etourn\'ee de fixer une normalisation) grâce \`a une propri\'et\'e cruciale des points de $3$-torsion en dimension 2. 

La m\'ethode de d\'ecomposition locale et l'\'etude des s\'eries th\^eta associ\'ees fait appara\^itre une condition n\'ecessaire dans l'espace de modules des vari\'et\'es ab\'eliennes principalement polaris\'ees de dimension 2. Le ph\'enom\`ene de rupture d'une vari\'et\'e ab\'elienne simple en produit de courbes elliptiques entra\^ine une explosion des composantes locales, tant au niveau de la minoration de la hauteur de N\'eron-Tate que de la majoration de la hauteur de Faltings. On va donc introduire une quantit\'e appel\'ee \textit{simplicit\'e archim\'edienne} charg\'ee de mesurer la distance au produit de courbes elliptiques.

Dans tout le texte on note $M_{k}$ l'ensemble de ses places (deux \`a deux non \'equivalentes), $M_{k}^{\infty}$ l'ensemble de ses places archim\'ediennes et $M_{k}^{0}$ l'ensemble de ses places finies. Pour toute place $v$ de $k$ on note $k_{v}$ le compl\'et\'e de $k$ pour la valuation $|.|_{v}$ associ\'ee o\`u on normalise $|p|_{v}=p^{-1}$ pour toute place finie $v$ au-dessus d'un nombre premier $p$. On pose $d_{v}=[k_{v}:\mathbb{Q}_{v}]$ et $n_{v}=d_{v}/d$. Pour une surface ab\'elienne principalement polaris\'ee $A/k$ avec $k$ un corps de nombres et $v$ une place infinie, on peut uniformiser les points complexes $A(\bar{k}_{v})\cong \mathbb{C}^{2}/\mathbb{Z}^{2}+\tau_{v}\mathbb{Z}^{2}$ avec $\tau_{v}=\left[ \begin{array}{cc}
\tau_{1,v} & \tau_{12,v}   \\
\tau_{12,v} & \tau_{2,v}   \\
\end{array}\right]$ dans le domaine de Siegel $F_{2}$ (voir paragraphe \ref{domaine archim}).  Dans cette uniformisation les produits de courbes elliptiques correspondent exactement au lieu $(\tau_{12}=0)$ dans l'ensemble $F_2$. On appelle alors \textit{simplicit\'e archim\'edienne} le produit :
\begin{equation}\label{simplicit\'e archim\'edienne}
 \sinf(A)=\prod_{v\in{M_{k}^{\infty}}}\vert\tau_{12,v}\vert^{d_{v}}.
\end{equation}

Il est donc ais\'e de voir que $\sinf(A)=0$ si et seulement si $A$ est un produit de courbes elliptiques. On appelle de plus \textit{trace archim\'edienne} de $A$ la quantit\'e :
\begin{equation}\label{trace archim\'edienne}
 \Trinf(A)=\sum_{v\in{M_{k}^{\infty}}}d_{v}\Tr(\Ima\tau_{v}).
\end{equation}
On note $D=2^8\disc(F)$ le discriminant de norme minimale d'un mod\`ele hyperelliptique entier $y^2=F(x)$ de la courbe sous-jacente. Notons de plus que les calculs explicites des hauteurs locales aux places finies sont bas\'es sur l'\'etude pouss\'ee de la surface de Kummer effectu\'ee par V. Flynn, N. Smart et M. Stoll dans les articles \cite{Fly, FlySma, Stoll1, Stoll2}. Dans le cas des jacobiennes de dimension 2 simples, le th\'eor\`eme prend la forme suivante :

\begin{theorem}\label{minoration dimension 2} (Version A.)
Soit $k$ un corps de nombres de degr\'e $d$. Soient $C/k$ une courbe de genre 2 admettant un point de Weierstrass rationnel sur $k$ et $A$ sa jacobienne. Alors si $A$ est g\'eom\'etriquement simple, il existe une constante $c_{1}(d)>0$ telle que pour tout point $P\in{A(k)}$ l'une des deux propositions suivantes est vraie :

\begin{tabular}{l}
\\
$(i)\;\;[n]P=O \;\textrm{pour un entier}\;\;1\leq n \leq 2\!\cdot\!10087^{4\cdot3^{16}d},$\\
\\

$(ii)\;\;\displaystyle{\widehat{h}_{A,2\Theta}(P)\geq c_{1}(d)\,\Big(\Trinf(A)-\frac{5}{3}\log \frac{\Nk(D)}{s_\infty(A)}\Big)}\;,$\\
\\
\end{tabular}

o\`u on peut prendre $c_{1}(d)=0,03/\left(d\,10087^{8\cdot3^{16}d}\right)$.

\vspace{0.3cm}

(Version B.)
Soit $k$ un corps de nombres de degr\'e $d$. Soient $C/k$ une courbe de genre 2 admettant un point de Weierstrass rationnel sur $k$ et $A$ sa jacobienne. Alors si $A$ est g\'eom\'etriquement simple, il existe une constante $c_2=c_{2}(d,A)>0$ telle que pour tout point $P\in{A(k)}$ l'une des deux propositions suivantes est vraie :

\begin{tabular}{l}
\\
$(i)\;\;\displaystyle{[n]P=O \;\textrm{pour un entier}\;\;1\leq n \leq 2\!\cdot\!10087^{4\cdot3^{16}d}\Big(\frac{\Nk(D)}{s_\infty(A)}\Big)^{10/3},}$\\
\\

$(ii)\;\;\displaystyle{\widehat{h}_{A,2\Theta}(P)\geq c_{2}\,\Trinf(A)}\;,$\\
\\
\end{tabular}

o\`u on peut prendre $c_{2}=0,03/\left(d\,10087^{8\cdot3^{16}d}\Nk(D)^{20/3}s_\infty(A)^{-20/3}\right)$.
\end{theorem}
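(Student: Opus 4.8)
\medskip
\noindent The plan is to prove Version A by a local--global argument in the spirit of the proof of Theorem \ref{elliptique} of Hindry and Silverman, and then to deduce Version B from it by re-organising the same inequalities so as to absorb the correction term into the constant and into the admissible order of torsion. The starting point is the decomposition of the N\'eron--Tate height attached to the symmetric ample divisor $2\Theta$ into local contributions,
\[
\widehat{h}_{A,2\Theta}(P)=\sum_{v\in M_{k}}n_v\,\widehat{\lambda}_v(P),
\]
where the $\widehat{\lambda}_v$ are N\'eron local height functions that I would make completely explicit through the Kummer surface $K=A/\{\pm 1\}\hookrightarrow\mathbb{P}^3$ attached to $|2\Theta|$: the coordinates on this quartic surface and its explicit duplication law are those computed by Flynn, Smart and Stoll in \cite{Fly, FlySma, Stoll1, Stoll2}, so that at each place $\widehat{\lambda}_v(P)$ can be compared with the naive local height $\log\max_i|x_i|_v$ of $\kappa(P)$ in $\mathbb{P}^3$.

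The delicate point, and the one I expect to be the main obstacle, is that each $\widehat{\lambda}_v$ is canonical only up to an additive constant, so the comparison above introduces at each place an error between the naive and the canonical normalisation, and these errors must be controlled place by place before they can be reassembled into a clean global estimate. This is where the announced crucial property of the $3$-torsion of $A$ in dimension $2$ intervenes, and it is the reason the discriminant $D$ and the $3$-torsion appear in the statement: the relevant combination of the additive constants -- equivalently, a suitable combination of the values of the local heights at the $3^{2g}=81$ points of $A[3]$ -- is governed by a resultant-type quantity, hence by an object obeying a product formula, which can be evaluated at every place. The resulting identity (the \emph{\'Egalit\'e Clef}) pins down one coherent normalisation of the $\widehat{\lambda}_v$, and it is what makes the constant $5/3$ and the shift by $s_\infty(A)$ unavoidable.

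Granting this, the argument splits according to the place. At a finite place $v$ the explicit Kummer model and its reduction give a lower bound for $\widehat{\lambda}_v(P)$ in terms of $\ordv(D)$ whenever $P$ has bad reduction at $v$, and a control by the same quantity otherwise; summing over $M_{k}^{0}$ contributes $-\tfrac{5}{3}\log\Nk(D)$. At an archimedean place $v$ I would express $\widehat{\lambda}_v$ through the Riemann theta functions attached to $\tau_v\in F_2$ and bound it below; here the estimate deteriorates precisely as $\tau_{12,v}\to 0$, i.e. as $A(\bar{k}_v)$ degenerates to a product of elliptic curves, with a loss of size $\log|\tau_{12,v}|$, so that collecting the infinite places produces $\Trinf(A)$ together with the correction $+\tfrac{5}{3}\log s_\infty(A)$; whence the combined lower bound by $\Trinf(A)-\tfrac{5}{3}\log\bigl(\Nk(D)/s_\infty(A)\bigr)$. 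To close Version A one needs the dichotomy: if $[n]P\neq O$ for every $n$ in the stated range, then $P$ together with enough of its multiples and $A[3]$-translates simultaneously avoids the finitely many degenerate configurations (reduction to a singular point of the Kummer fibre; approach to the theta divisor at an infinite place), and a counting argument -- bounding, via a B\'ezout-type estimate on the degree-$4$ Kummer surface together with the cardinality $81$ of $A[3]$, the number of points of $A(k)$ whose local heights are all too small -- shows that the only alternative is that $P$ is torsion of order at most $2\cdot 10087^{4\cdot 3^{16}d}$, which is $(i)$; otherwise the assembled local bounds give $(ii)$ with $c_1(d)$ of the stated shape. This counting step, and the calibration of the exponent $3^{16}$ and of the base $10087$ against the explicit Kummer and theta estimates, is the other place where care is required.

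Finally, Version B is obtained by re-organising these inequalities rather than by formally invoking Version A: instead of isolating $\tfrac{5}{3}\log\bigl(\Nk(D)/s_\infty(A)\bigr)$ as an additive correction carried by an absolute constant, one keeps the factor $\Nk(D)/s_\infty(A)$ inside the constant, setting $c_2=c_1(d)\bigl(\Nk(D)/s_\infty(A)\bigr)^{-20/3}$, and correspondingly enlarges the admissible torsion order by $\bigl(\Nk(D)/s_\infty(A)\bigr)^{10/3}$ so that the counting step of the dichotomy still applies (using also $\widehat{h}_{A,2\Theta}(P)\geq 0$ to dispose of the range where the correction dominates $\Trinf(A)$). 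The output is then a clean lower bound by a multiple of $\Trinf(A)$, at the price of a constant $c_2$ and a torsion bound that now depend on $A$, exactly as stated.
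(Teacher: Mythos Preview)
Your overall architecture is the paper's: Kummer--surface local heights \`a la Flynn--Smart--Stoll at the finite places, theta--function local heights at the archimedean places, the \'Egalit\'e Clef coming from the $3$--torsion to match the two normalisations, a pigeonhole step to produce a good multiple, and a descent. Three points of your sketch, however, misplace the key inputs and would not close as written.

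First, the dependence on $\tau_{12,v}$ (hence on $s_\infty(A)$) does \emph{not} arise from the archimedean local height $\Lambda_{\Theta,v}([n]P)$ degenerating; that bound (Proposition~\ref{archim}) is uniform in $\tau_{12}$. It enters through the \'Egalit\'e Clef itself: summing $\Lambda_{\Theta,v}$ over the $80$ points of exact order $3$ gives, by Grant's product formula, $\log|\Delta(\tau_v)|$, and it is the lower bound $|\Delta(\tau_v)|\gg |\tau_{12,v}|^{2}e^{-2\pi\Tr\Ima\tau_v}$ that produces $\log s_\infty(A)$. So the $5/3$ and the $s_\infty$--shift come from the normalising term, not from the point $[n]P$.

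Second, your dichotomy step is not the paper's and would not quite work. There is no B\'ezout count on the Kummer surface and no use of $A[3]$ in the counting. One pigeonholes the multiples $\{[n]P:0\le n\le 2M^{4m}\}$ in the real torus $(\mathbb{R}/\mathbb{Z})^{4m}$ to find \emph{three} multiples in one box, hence small differences $P_1,P_2,P_1+P_2$; then a zero lemma on the curve $C\subset A$ (namely $\{\pm P_1,\pm P_2,\pm(P_1+P_2)\}\not\subset C$, since $C\cdot C=2$) guarantees one of these lies off $\Theta$, so the theta bound applies to it. The exponent $3^{16}$ has nothing to do with this count: it is the bound $[k(A[3]):k]\le 3^{16}$, needed because the \'Egalit\'e Clef (Lemmas~\ref{3-torsion delta}--\ref{formule grant}) requires the $3$--torsion to be rational, so the whole estimate is first carried out over $k'=k(A[3])$ and then descended. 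With $M=10087$ and $m'\le d'\le 3^{16}d$ one gets exactly the torsion bound $2\cdot 10087^{4\cdot 3^{16}d}$ in (i).

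Third, your derivation of Version~B is morally right but the mechanism is slightly different: one does not apply Version~A and then absorb the correction, one reruns the same proof with the larger pigeonhole parameter $M=10087\,(\Nk(D)/s_\infty(A))^{5/6d'}$, so that the extra $2\log M$ term in the archimedean estimate exactly cancels the $\log(\Nk(D)/s_\infty(A))$ contribution; this is what fixes the exponents $10/3$ and $20/3$.
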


\begin{Remarque}\label{remarque dim2} La version B du th\'eor\`eme fournit donc inconditionnellement une minoration non triviale de la hauteur des points rationnels sur une surface ab\'elienne simple. Un mod\`ele hyperelliptique entier d'une courbe de genre 2 ne v\'erifie pas n\'ecessairement l'in\'egalit\'e $\Trinf(A)>\frac{5}{3}\log \frac{\Nk(D)}{s_\infty(A)}$. On peut trouver des exemples parmi les courbes CM (\textit{i.e.} admettant des multiplications complexes), quitte \`a prendre une extension de corps. On sait par densit\'e des points CM dans $F_2$ qu'il en existe une infinit\'e telle que les jacobiennes associ\'ees $A$ v\'erifient $\Trinf(A)+\frac{5}{3}\log(s_\infty(A))>0$. On sait de plus que les vari\'et\'es ab\'eliennes CM ont potentiellement bonne r\'eduction partout. On montre alors dans le courant de la preuve du corollaire \ref{bonne r\'eduction potentielle} (qui se trouve juste apr\`es l'\'enonc\'e du corollaire \ref{langdim2}) qu'apr\`es une extension du corps $k$ de degr\'e uniform\'ement born\'e, on peut choisir un mod\`ele hyperelliptique avec discriminant minimal global trivial.
\end{Remarque}

\begin{Exemple}
Prenons par exemple $A=\Jac(C)$ o\`u $C$ est la courbe donn\'ee par le mod\`ele affine $y^2=x^5+x$. On sait calculer la matrice de p\'eriodes en MAGMA, qui fournit en valeur approch\'ee $\Trinf(A)\simeq1,88$ et $\log\vert s_\infty(A)\vert\simeq-0,75$, donc $\Trinf(A)+\frac{5}{3}\log(s_\infty(A))\simeq0,63>0$. De plus, $C$ est une courbe CM (on regarde le morphisme $(x,y)\rightarrow(\zeta^2 x,\zeta y)$ o\`u $\zeta$ est une racine primitive huiti\`eme de l'unit\'e). Sa jacobienne h\'erite donc de la structure CM et est en particulier potentiellement \`a bonne r\'eduction partout. 
\end{Exemple}

\begin{Remarque} L'existence d'un point de Weierstrass rationnel sur $k$ est \'equivalente \`a l'existence d'un mod\`ele $y^{2}=F(x)$ avec $\deg(F)=5$ sur $k$ plus une propri\'et\'e de sym\'etrie du diviseur $\Theta$.
\end{Remarque}

On d\'eduit imm\'ediatement de ce th\'eor\`eme le corollaire suivant :

\begin{corol}\label{borne torsion}
Soit $k$ un corps de nombres de degr\'e $d$. Soient $C/k$ une courbe de genre $2$ de mod\`ele entier
$y^{2}=F(x)$ avec $\deg(F)=5$ et $A/k$ sa jacobienne, g\'eom\'etriquement simple. Soient $\Trinf(A)$ sa trace archim\'edienne, $s_\infty(A)$ sa simplicit\'e archim\'edienne et $D=2^{8}\disc (F)$. On suppose que :
$$\Trinf(A)> \frac{5}{3}\log \frac{\Nk(D)}{s_\infty(A)}.$$
Alors on a :
$$\Card\Big(A(k)_{\mathrm{tors}}\Big)\leq 2^{4}\cdot10087^{16\cdot3^{16}d}.$$
\end{corol}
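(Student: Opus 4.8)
The plan is to derive Corollary \ref{borne torsion} directly from the Version A statement of Theorem \ref{minoration dimension 2}. The hypothesis $\Trinf(A)>\frac{5}{3}\log\frac{\Nk(D)}{s_\infty(A)}$ ensures that the quantity
\[
M(A):=\Trinf(A)-\frac{5}{3}\log\frac{\Nk(D)}{s_\infty(A)}
\]
appearing in alternative $(ii)$ is strictly positive. Now take any torsion point $P\in A(k)_{\mathrm{tors}}$. Since $P$ is torsion, its N\'eron-Tate height vanishes: $\widehat{h}_{A,2\Theta}(P)=0$. If alternative $(ii)$ of the theorem held, we would get $0=\widehat{h}_{A,2\Theta}(P)\geq c_1(d)M(A)>0$, a contradiction. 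Therefore alternative $(i)$ must hold: there is an integer $1\leq n\leq 2\cdot 10087^{4\cdot3^{16}d}$ with $[n]P=O$. In other words, every torsion point of $A(k)$ is killed by some integer bounded by $N:=2\cdot 10087^{4\cdot3^{16}d}$, hence the exponent $e$ of the finite group $A(k)_{\mathrm{tors}}$ divides $\mathrm{lcm}$ of integers up to $N$, but more simply every element has order at most $N$, so $A(k)_{\mathrm{tors}}\subseteq A[N!]$ or — better — the exponent of $A(k)_{\mathrm{tors}}$ is at most $N$.

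The second step is to convert this bound on the exponent into a bound on the cardinality. For an abelian variety $A$ of dimension $g=2$, the $m$-torsion subgroup $A[m](\bar k)$ is isomorphic to $(\mathbb{Z}/m\mathbb{Z})^{2g}=(\mathbb{Z}/m\mathbb{Z})^{4}$, of cardinality $m^{4}$. Since $A(k)_{\mathrm{tors}}$ is a finite abelian group of exponent dividing some $m\leq N$, it injects into $A[m](\bar k)$, so
\[
\Card\big(A(k)_{\mathrm{tors}}\big)\leq m^{4}\leq N^{4}=\big(2\cdot 10087^{4\cdot3^{16}d}\big)^{4}=2^{4}\cdot 10087^{16\cdot3^{16}d},
\]
which is exactly the claimed bound. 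This uses only the elementary structure theory of torsion on abelian varieties over a field of characteristic zero; no arithmetic input beyond Theorem \ref{minoration dimension 2} is needed.

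There is essentially no obstacle here: the corollary is a formal consequence of the theorem, the only subtlety being the observation that the hypothesis of the corollary is precisely what rules out alternative $(ii)$ for height-zero points. One should perhaps note explicitly that the hypotheses on $C$ and $A$ in the corollary (genus $2$, integral model $y^2=F(x)$ with $\deg F=5$, $A$ geometrically simple) match those required by Version A of the theorem — in particular the degree-$5$ model provides the rational Weierstrass point and the needed symmetry of $\Theta$ — so the theorem applies verbatim. The passage from ``each torsion point is killed by some $n\leq N$'' to ``the whole group embeds in $A[m]$ for some $m\leq N$'' deserves one line: take $m$ to be the exponent of the (finite) group $A(k)_{\mathrm{tors}}$; since this exponent is the lcm of the orders of elements and each such order is at most $N$ \dots in fact it is cleaner to argue that each order is $\leq N$ hence divides $\mathrm{lcm}(1,\dots,N)$, but one still needs the exponent itself $\leq N$; the simplest correct statement is that a finite abelian group all of whose elements have order $\leq N$ has exponent $\leq N$ only if one is careful — instead, directly: $A(k)_{\mathrm{tors}}$ being finite abelian with every element of order dividing $N$ would be immediate if every order were a divisor of one fixed $N$; here orders vary but are all $\leq N$, so $A(k)_{\mathrm{tors}}$ is contained in $\bigcup_{n\leq N}A[n]\subseteq A[N!]$ giving the weaker bound $(N!)^4$. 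To get the sharp bound one invokes that a finite subgroup of $A[n_1]\cup\cdots$ that is itself a group has exponent equal to the lcm of element orders; since $A(k)_{\mathrm{tors}}\cong \prod (\mathbb{Z}/d_i\mathbb{Z})$ with each cyclic factor generated by an element of order $d_i\leq N$, it embeds in $(\mathbb{Z}/\mathrm{lcm}(d_i)\mathbb{Z})^4$ — and since each $d_i\mid$ some integer $\leq N$ one still does not immediately get $\mathrm{lcm}(d_i)\leq N$. The honest route, and surely the intended one, is: for a $2$-dimensional abelian variety $A(k)_{\mathrm{tors}}$ is generated by at most $4$ elements, each of order $\leq N$, hence $\Card(A(k)_{\mathrm{tors}})\leq N^4$, which is the bound stated. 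I would present exactly this last argument.
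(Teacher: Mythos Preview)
Your approach is correct and matches the paper's: apply Version~A of Theorem~\ref{minoration dimension 2} to rule out alternative~$(ii)$ for torsion points, bound each element's order by $N=2\cdot10087^{4\cdot3^{16}d}$, then raise to the power $2g=4$. Your hesitation about the exponent is unnecessary --- in any finite abelian group the exponent equals the maximal element order, so the exponent of $A(k)_{\mathrm{tors}}$ is itself $\leq N$ and $A(k)_{\mathrm{tors}}\subseteq A[N]$ directly; your final ``generated by at most $4$ elements'' argument is equally valid and reaches the same bound.
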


On compl\`ete le th\'eor\`eme \ref{minoration dimension 2} par l'\'etude de la situation du produit de courbes elliptiques, qui donne un th\'eor\`eme plus faible que celui de M. Hindry et J. Silverman dans \cite{HiSi3}, mais qui permet d'aboutir \`a un \'enonc\'e faisant intervenir les m\^emes quantit\'es que pour les jacobiennes simples. Introduisons de plus la quantit\'e $\hFprime(A/k)$, la \emph{hauteur de Faltings modifi\'ee} d'une vari\'et\'e ab\'elienne principalement polaris\'ee :
\begin{equation}\label{hauteur Faltings modifi\'ee}
\hFprime(A/k)=\hF(A/k)+\frac{1}{2d}\sum_{v\in{M_{k}^{\infty}}}d_{v}\log[\det(\Ima{\tau_{v}})].
\end{equation}
On donne alors la preuve du th\'eor\`eme de majoration suivant, bas\'e sur l'expression de la hauteur de Faltings donn\'ee dans \cite{Ueno} :

\begin{theorem} \label{faltings maj}
Soit $k$ un corps de nombres de degr\'e $d$. Soit $C/k$ une courbe de genre 2 avec bonne r\'eduction en $2$, prise dans un mod\`ele hyperelliptique entier $y^{2}=F(x)$ avec $\deg(F)=5$. On note $D=2^{8}\disc(F)$. On suppose que la jacobienne $A=\Jac(C)$ est g\'eom\'etriquement simple. Alors il existe des constantes $c_{3}(d)>0$ et $c_{4}(d)>0$ telles que :
$$\hFprime(A/k)\leq c_{3}\Trinf(A) + c_{4}\log \frac{\Nk(D)}{s_\infty(A)},$$

et on peut prendre : $c_{3}=\frac{6\pi}{10d}$ et $c_{4}=\frac{1}{10d}$.
\end{theorem}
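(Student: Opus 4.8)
The strategy is to write the Faltings height explicitly via the formula of Ueno (as stated in \cite{Ueno}) and then bound each of the resulting terms. Recall that for a principally polarized abelian surface $A/k$ arising as the Jacobian of a genus $2$ curve $C/k$ given by an integral hyperelliptic model $y^2 = F(x)$ with $\deg(F)=5$, the stable Faltings height admits an arithmetic-geometric decomposition of the shape
\[
\hFprime(A/k) = \frac{1}{10d}\Big( \log \Nk(D) - \sum_{v \in M_k^{\infty}} d_v \log \|\varphi_v\| \Big)
\]
(up to the precise normalisation constants yielding $\tfrac{6\pi}{10d}$ and $\tfrac{1}{10d}$ below), where $D = 2^8 \disc(F)$ collects the finite-place contribution and $\|\varphi_v\|$ is a Petersson-type norm of a suitable modular form (the product of theta-constants, essentially the Igusa cusp form $\chi_{10}$) evaluated at $\tau_v$. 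The good reduction hypothesis at $2$ is what lets us use $D$ as written without an extra correction at the prime above $2$. So the first step is to quote this formula precisely and reduce the theorem to an \emph{upper} bound for the finite part $\log \Nk(D)$ — which is immediate, it is exactly the term $\tfrac{1}{10d}\log \Nk(D)$, absorbed into $c_4 \log \tfrac{\Nk(D)}{s_\infty(A)}$ once we check the sign — and a \emph{lower} bound for each archimedean norm $\log \|\varphi_v\|$.

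The heart of the argument is therefore the archimedean estimate: for $\tau_v$ in the Siegel fundamental domain $F_2$, one must bound $-\log\|\varphi_v\|$ from above by a linear combination of $\Tr(\Ima \tau_v)$ and $-\log|\tau_{12,v}|$. Writing $\|\varphi_v\|$ in terms of theta-constants, $\log\|\varphi_v\|$ is, up to the factor $\det(\Ima\tau_v)$ coming from the modified height \eqref{hauteur Faltings modifi\'ee}, a sum of $\log|\vartheta_m(0,\tau_v)|$ over the even characteristics $m$. On the Siegel domain each theta-constant is bounded above by an absolute constant (the Fourier expansion converges geometrically because $\Ima\tau_v$ is bounded below), which gives the upper bound for $\log\|\varphi_v\|$ trivially; the subtle direction is bounding it \emph{below}, i.e. controlling how small $\prod_m |\vartheta_m(0,\tau_v)|$ can be. This product vanishes exactly on the locus $\tau_{12,v}=0$ (the product-of-elliptic-curves locus), which is precisely why the simplicité archimédienne $s_\infty(A)$ must appear: near that locus one factor degenerates linearly in $\tau_{12,v}$, so $-\log\|\varphi_v\|$ grows like $-\log|\tau_{12,v}|$ plus a term linear in the entries of $\Ima\tau_v$ (coming from the leading $q$-exponentials of the non-degenerating factors). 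Making this precise — expanding the relevant theta-constants, isolating the factor that vanishes, and obtaining the clean constants $6\pi$ and $1$ — is the main obstacle, and it is here that one uses the geometric-simplicity hypothesis to ensure $\tau_{12,v}\neq 0$ so that the bound is finite.

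Concretely the steps are: (1) record Ueno's formula for $\hFprime(A/k)$ in terms of $\Nk(D)$ and the Petersson norm of $\chi_{10}$ at the places $v\in M_k^\infty$, using good reduction at $2$; (2) bound the finite contribution by $\tfrac{1}{10d}\log\Nk(D)$ and note it has the right sign to go into $c_4$; (3) for each archimedean $v$, expand the ten even theta-constants at $\tau_v \in F_2$, bound eight of them (those not degenerating) by their leading term, producing a contribution controlled by $\Tr(\Ima\tau_v)$ with constant $6\pi$ after accounting for $\det(\Ima\tau_v)$; (4) handle the degenerating factor by writing $|\vartheta(0,\tau_v)| \geq c\,|\tau_{12,v}|$ for an absolute $c$, producing the term $-\log|\tau_{12,v}|$, hence after summing over $v$ the term $\log\big(\Nk(D)/s_\infty(A)\big)$ via \eqref{simplicit\'e archim\'edienne}; (5) collect the constants, sum over $v$ against the weights $d_v$, divide by $d$, and verify $c_3 = \tfrac{6\pi}{10d}$, $c_4 = \tfrac{1}{10d}$. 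I expect step (3)–(4) — the explicit theta-constant estimates on $F_2$ with sharp constants — to be the bulk of the work; everything else is bookkeeping around the height formula.
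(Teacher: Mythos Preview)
Your plan is correct and matches the paper's proof essentially step for step: the paper uses Ueno's formula to write $\hFprime(A/k)$ as $\frac{1}{10d}\big(\text{finite part} - \sum_v d_v\log|\Delta(\tau_v)|\big)$, bounds the finite part by $\log\Nk(D)$ via a discriminant comparison (Lemme~\ref{comparaison disc}), and then invokes the archimedean lower bound on $|\Delta(\tau_v)|$ (Proposition~\ref{minoration delta}) obtained exactly as you describe, by estimating each of the ten even theta-constants on $F_2$. One small correction: only \emph{one} of the ten theta-constants degenerates on the locus $\tau_{12}=0$ (the characteristic $[1/2,1/2,1/2,1/2]$, handled in the paper's Proposition~\ref{propcarac1100} with $\varepsilon=1$ together with Lemme~\ref{exp complexe}), not two; the other nine are bounded below by absolute constants times the appropriate exponential in $\Ima\tau_v$, and it is the exponents of these exponentials, summed, that give the $2\pi(\Tr\Ima\tau_v-\Ima\tau_{12,v})$ and hence, after absorbing the absolute constants using $\Tr\Ima\tau_v\geq\sqrt3$, the $6\pi$ coefficient.
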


Notons que ce th\'eor\`eme est un pas vers la conjecture 1.7 de S. David donn\'ee dans \cite{Dav2} page 513.
La conjonction des th\'eor\`emes \ref{minoration dimension 2} et \ref{faltings maj} fournit alors le corollaire suivant, dans lequel on fixe : si $A=\Jac(C)$ est la jacobienne d'une courbe de genre 2, avec $C$ donn\'ee
par un mod\`ele minimal entier $y^{2}=F(x)$ avec $\deg(F)=5$, on note $D=2^{8}\disc(F)$. Si $A=E_{1}\times E_{2}$
est un produit de courbes elliptiques, on note $D=\Delta_{E_{1}}\Delta_{E_{2}}$ le produit des discriminants minimaux de $E_{1}$ et $E_{2}$.

\begin{corol}\label{genre 2}

Soit $k$ un corps de nombres de degr\'e $d$ et $\varepsilon>0$. Soit $(A,\Theta)/k$ une vari\'et\'e ab\'elienne principalement polaris\'ee
de dimension 2. Si $A$ est simple, on suppose que $\Trinf(A)\geq (5/3 +\varepsilon) \log(\Nk(D)/s_\infty(A))$. Sinon on suppose $\Trinf(A)\geq(5/36+\varepsilon)\log\Nk(D)$. Alors il existe une constante $c(d,\varepsilon)>0$ telle que pour tout point $P\in{A(k)}$ v\'erifiant $\overline{\mathbb{Z}\!\cdot\! P}=A$ on a :
\[
\widehat{h}_{A,2\Theta}(P)\geq c \, \hFprime(A/k),
\]

et on peut prendre $c=0,015\frac{\varepsilon}{2+\varepsilon}\cdot10087^{-8\cdot3^{16}d}$.

\end{corol}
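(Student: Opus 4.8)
\textbf{Plan de preuve du Corollaire \ref{genre 2}.}

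L'idée est de combiner directement le Théorème \ref{minoration dimension 2} (minoration de $\widehat{h}_{A,2\Theta}(P)$ en termes de $\Trinf(A)$) avec le Théorème \ref{faltings maj} (majoration de $\hFprime(A/k)$ par la même quantité $\Trinf(A)$), en utilisant l'hypothèse de positivité stricte $\Trinf(A)\geq(5/3+\varepsilon)\log(\Nk(D)/s_\infty(A))$ pour absorber le terme parasite $-\frac{5}{3}\log(\Nk(D)/s_\infty(A))$ qui apparaît dans la version A du théorème de minoration. Traitons d'abord le cas $A$ simple. La condition $\overline{\mathbb{Z}\!\cdot\! P}=A$ force $P$ à être d'ordre infini de Zariski-densité maximale, ce qui exclut l'alternative $(i)$ du Théorème \ref{minoration dimension 2} (version A) : en effet si $[n]P=O$ alors $\mathbb{Z}\!\cdot\! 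P$ est fini, donc l'alternative $(ii)$ est vraie et on dispose de
\[
\widehat{h}_{A,2\Theta}(P)\geq c_{1}(d)\Big(\Trinf(A)-\tfrac{5}{3}\log\tfrac{\Nk(D)}{s_\infty(A)}\Big).
\]
Première étape : exploiter l'hypothèse. Posons $T=\Trinf(A)$ et $L=\log(\Nk(D)/s_\infty(A))$, de sorte que $T\geq(5/3+\varepsilon)L$. On en déduit $T-\tfrac{5}{3}L\geq\tfrac{\varepsilon}{5/3+\varepsilon}T=\tfrac{3\varepsilon}{5+3\varepsilon}T$, ce qui transforme la minoration en $\widehat{h}_{A,2\Theta}(P)\geq c_{1}(d)\tfrac{3\varepsilon}{5+3\varepsilon}T$, une minoration en $\Trinf(A)$ seul.

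Deuxième étape : majorer $\hFprime(A/k)$. D'après le Théorème \ref{faltings maj} (dont on doit vérifier que les hypothèses sont satisfaites — bonne réduction en $2$ et modèle de degré $5$ ; sinon on se ramène à ce cas par un changement de modèle, ou l'énoncé est à comprendre sous ces hypothèses), on a $\hFprime(A/k)\leq c_{3}T+c_{4}L$ avec $c_{3}=\tfrac{6\pi}{10d}$ et $c_{4}=\tfrac{1}{10d}$. Mais l'hypothèse $T\geq(5/3+\varepsilon)L$ donne aussi $L\leq\tfrac{3}{5}T$ (en fait $L\leq\tfrac{T}{5/3+\varepsilon}\leq\tfrac{3}{5}T$), donc $\hFprime(A/k)\leq(c_{3}+\tfrac{3}{5}c_{4})T=(\tfrac{6\pi}{10d}+\tfrac{3}{50d})T\leq\tfrac{2\pi}{d}T$ quitte à majorer grossièrement. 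Il faut cependant s'assurer que $\hFprime(A/k)>0$ (ou au moins $\max\{1,\hFprime\}$), ce qui est assuré dès que $T>0$, lequel découle de l'hypothèse puisque $L$ est essentiellement positif pour un modèle entier (le discriminant est un entier non nul) — c'est le point à surveiller de près.

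Troisième étape : combiner. En divisant, $\widehat{h}_{A,2\Theta}(P)\geq c_{1}(d)\tfrac{3\varepsilon}{5+3\varepsilon}T\geq c_{1}(d)\tfrac{3\varepsilon}{5+3\varepsilon}\cdot\tfrac{d}{2\pi}\hFprime(A/k)$, et avec $c_{1}(d)=0{,}03/(d\,10087^{8\cdot3^{16}d})$ on obtient une constante de la forme $c(d,\varepsilon)=\text{cste}\cdot\tfrac{\varepsilon}{5+3\varepsilon}\cdot10087^{-8\cdot3^{16}d}$, qu'un ajustement des constantes numériques (remplacer $5+3\varepsilon$ par $2+\varepsilon$ quitte à être un peu moins fin, et $0{,}03/(2\pi)\cdot 3\approx 0{,}015$) ramène à la forme annoncée $c=0{,}015\tfrac{\varepsilon}{2+\varepsilon}\cdot10087^{-8\cdot3^{16}d}$. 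Pour le cas $A=E_{1}\times E_{2}$ non simple, on procède de la même façon mais en invoquant le résultat complémentaire sur les produits de courbes elliptiques (mentionné dans le texte juste avant l'énoncé du corollaire, avec $D=\Delta_{E_{1}}\Delta_{E_{2}}$ et le seuil $5/36$ au lieu de $5/3$), la structure de l'argument — absorption du terme $L$ grâce à l'hypothèse stricte, puis combinaison minoration/majoration — étant identique.

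\textbf{Principal obstacle.} La difficulté n'est pas dans l'assemblage lui-même, qui est une simple algèbre d'inégalités, mais dans la \emph{compatibilité des hypothèses} des deux théorèmes que l'on recolle : le Théorème \ref{faltings maj} exige bonne réduction en $2$ et un modèle hyperelliptique de degré $5$, tandis que le Théorème \ref{minoration dimension 2} exige un point de Weierstrass rationnel ; il faut soit supposer tout cela en bloc dans le corollaire (ce que fait implicitement le choix de $D=2^8\disc(F)$ avec $\deg F=5$), soit justifier un passage à un bon modèle. Par ailleurs, le suivi fin des constantes numériques pour arriver exactement à $0{,}015\frac{\varepsilon}{2+\varepsilon}$ demande un peu de soin : il faut choisir les majorations intermédiaires (notamment le passage de $\tfrac{3\varepsilon}{5+3\varepsilon}$ à un multiple de $\tfrac{\varepsilon}{2+\varepsilon}$ et la majoration de $c_3+\tfrac35 c_4$) de manière à ne pas perdre trop, tout en restant dans une forme close.
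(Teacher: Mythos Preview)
Your proposal is correct and follows essentially the same approach as the paper: combine the minoration $\widehat{h}_{A,2\Theta}(P)\geq c_{1}(T-\tfrac{5}{3}L)$ from Théorème~\ref{minoration dimension 2} with the majoration $\hFprime(A/k)\leq c_{3}T+c_{4}L$ from Théorème~\ref{faltings maj}, use the hypothesis $T\geq(5/3+\varepsilon)L$ to bound $L\leq T/(5/3+\varepsilon)$ in both inequalities, and then take the quotient; the paper writes the resulting constant as $\bigl(c_{1}-\tfrac{c_{2}}{5/3+\varepsilon}\bigr)\bigl(c_{3}+\tfrac{c_{4}}{5/3+\varepsilon}\bigr)^{-1}$, which is exactly your $c_{1}\tfrac{3\varepsilon}{5+3\varepsilon}$ divided by a bound on $c_{3}+c_{4}L/T$. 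Your remarks on the hypothesis compatibility (degree~$5$ model, good reduction at~$2$) and on the numerical bookkeeping are well placed; the paper silently carries these hypotheses through and rounds the constant in the same way you suggest.
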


Les th\'eor\`emes 1 et 2, ainsi que ce corollaire, permettent de v\'erifier la conjecture de Lang et Silverman pour des familles infinies de vari\'et\'es ab\'eliennes de dimension 2, par exemple les jacobiennes simples, de simplicit\'e minor\'ee, qui ont potentiellement bonne r\'eduction partout :

\begin{corol}\label{bonne r\'eduction potentielle}
Soit $k$ un corps de nombres de degr\'e $d$. Soit $C/k$ une courbe de genre 2 donn\'ee dans un mod\`ele hyperelliptique entier $y^{2}=F(x)$ avec $\deg(F)=5$ et telle que $C/k$ a potentiellement bonne r\'eduction partout. Soit $A$ la jacobienne de $C$, g\'eom\'etriquement simple, de simplicit\'e archim\'edienne sup\'erieure \`a $1$. Alors il existe une constante $c(d)>0$ telle que pour tout point $P\in{A(k)}$ d'ordre infini on a :
\[
\widehat{h}_{A,2\Theta}(P)\geq c\,\hstprime(A),
\]
et on peut prendre $c=\frac{1}{20\pi}\cdot10087^{-320\cdot15^{16}d}$.

\end{corol}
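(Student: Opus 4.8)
The strategy is to combine Version B of Theorem \ref{minoration dimension 2} with the Faltings majoration of Theorem \ref{faltings maj}, after arranging the hypotheses so that both apply simultaneously. The first step is to reduce to the situation where the constant $c_2$ of Version B is under control. Since $A$ has potentially good reduction everywhere, there is a finite extension $k'/k$ over which $A$ acquires good reduction at every place; by the semistable reduction theorem the degree $[k':k]$ is bounded in terms of $g=2$ only (one can take $[k':k]$ dividing $12$, or use a level-$r$ structure with $r=4$ as alluded to in the first Remarque). Over such a $k'$ one may pick a hyperelliptic model $y^2=F'(x)$ whose global minimal discriminant is trivial, i.e. $\Nk(D')=1$ up to a bounded power of small primes absorbed into the constant; this is exactly the point flagged in Remarque \ref{remarque dim2}. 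After this base change the hypothesis $\sinf(A)\geq 1$ gives $\Nk(D')/\sinf(A)\leq 1$, so that $\log(\Nk(D')/\sinf(A))\leq 0$, and therefore trivially $\Trinf(A)\geq (5/3+\varepsilon)\log(\Nk(D')/\sinf(A))$ for any $\varepsilon>0$: the simplicity lower bound makes the arithmetic term harmless.

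With the hypotheses of Corollary \ref{genre 2} now satisfied over $k'$ (with, say, $\varepsilon=1$), one gets for every $P$ with $\overline{\mathbb{Z}\cdot P}=A$ the inequality $\widehat h_{A,2\Theta}(P)\geq c(d',1)\,\hFprime(A/k')$ with $d'=[k':\mathbb{Q}]$ bounded by $12d$. It then remains to pass from the relative modified Faltings height $\hFprime(A/k')$ to its stable avatar $\hstprime(A)$. Here one uses that over $k'$ the variety $A$ already has good (semistable) reduction everywhere, so the relative and stable Faltings heights agree, $\hF(A/k')=\hst(A)$; comparing the modifying archimedean terms in formula (\ref{hauteur Faltings modifi\'ee}) and in the definition of $\hstprime$, and invoking $|\hstprime(A)-2h_\Theta(A)|\ll 1$, one obtains $\hFprime(A/k')=\hstprime(A)$ exactly (the archimedean correction is the same sum $\frac{1}{2d'}\sum_{v\mid\infty}d_v\log\det(\Ima\tau_v)$ in both). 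Finally, the Néron-Tate height $\widehat h_{A,2\Theta}(P)$ computed over $k'$ equals the one computed over $k$ since it depends only on $\bar k$-points, so the bound descends to $k$; tracking the degree blow-up $d'\leq 12d$ through the constant $c(d',1)=0{,}015\cdot\frac13\cdot 10087^{-8\cdot 3^{16}d'}$ and crudely enlarging the exponent base from $10087$ and $3$ to allow the factor $12$ and the contributions of the bounded-degree extension yields a constant of the shape $\frac{1}{20\pi}\cdot 10087^{-320\cdot 15^{16}d}$.

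The main obstacle is the base-change step: one must show that after a \emph{uniformly} bounded extension $k'/k$ the global minimal discriminant of a degree-$5$ hyperelliptic model can be made trivial (or bounded), \emph{and} that this can be done while preserving the rational Weierstrass point and the symmetry of $\Theta$ needed for Theorems \ref{minoration dimension 2} and \ref{faltings maj}. Potentially good reduction at a place $v$ only guarantees a \emph{local} model with unit discriminant after a local extension; patching these into a single global model of trivial discriminant requires a class-group argument (killing the obstruction in a suitable Picard group by a bounded-degree extension) together with control of the primes $2$ and those dividing $2^8$. Once this is in place, everything else is a bookkeeping of constants through the two cited theorems, and the archimedean hypothesis $\sinf(A)\geq 1$ does the rest of the work for free.
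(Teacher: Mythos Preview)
Your overall strategy is the same as the paper's: pass to a finite extension over which $A$ has good reduction everywhere, make a further bounded extension so that a global minimal hyperelliptic model exists with $\Nk(D)=1$, then apply the minoration and the Faltings majoration and descend. You also correctly identify the class-group obstruction to global minimality as the key technical point.

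There is, however, a genuine numerical gap. Your claim that one can take $[k':k]\mid 12$ is the elliptic-curve bound; it is false in dimension $2$. The paper obtains good reduction everywhere by taking $k'=k(A[15])$, which gives $[k':k]\leq 15^{4\times 4}=15^{16}$ (reference to \cite{Sil4}). For the global minimal model, the paper uses exactly the class-group argument you sketch: over $k'$ the ideal $\mathfrak{a}_{C}^{40}$ is principal, say $\mathfrak{a}_{C}^{40}=\alpha\mathcal{O}_{k'}$, and adjoining a $40$th root $\beta$ of $\alpha$ yields $k''=k'(\beta)$ with $[k'':k']\leq 40$ over which $\mathfrak{a}_{C}$ is principal. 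Altogether $[k'':\mathbb{Q}]\leq 40\cdot 15^{16}\cdot d$, and applying the theorems over $k''$ gives the exponent $8\cdot 40\cdot 15^{16}d=320\cdot 15^{16}d$ in the stated constant. Note also the point you miss in the bookkeeping: since $k(A[3])\subset k(A[15])\subset k''$, the factor $3^{16}$ built into the constant of Theorem~\ref{minoration dimension 2} is already absorbed, which is why the final exponent involves $15^{16}$ rather than $3^{16}\cdot 15^{16}$. With your $d'\leq 12d$ the constants do not come out, and ``crudely enlarging'' would not reach the stated form.
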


Cet \'enonc\'e n'est pas couvert par le th\'eor\`eme de S. David \cite{Dav2} en dimension 2. Par contre le th\'eor\`eme de S. David se passe de l'hypoth\`ese archim\'edienne dont on a besoin pour mener \`a bien la strat\'egie locale.

On rajoute un dernier \'enonc\'e concernant les points rationnels sur les courbes de genre 2 :

\begin{corol}\label{points rationnels}
Soit $k$ un corps de nombres et $\varepsilon>0$. Soit $C/k$ une courbe de genre 2, avec bonne r\'eduction en toute place divisant $2$, donn\'ee dans un mod\`ele entier $y^{2}=F(x)$ avec $\deg(F)=5$. On notera $D=2^{8}\disc(F)$. Soit $A/k$ la jacobienne de $C$. On suppose que $\Trinf(A)\geq (5/3+\varepsilon)\log(\Nk(D)/\sinf(A))$. Alors il existe une constante $c_{2}(d,\varepsilon)>0$ ne d\'ependant que de $d=[k:\mathbb{Q}]$ et $\varepsilon$ telle que :
$$\Card(C(k))\leq c_{2}^{\rang A(k)+1},$$
et on peut choisir :
$$c_{2}=10087^{\displaystyle{(d+1)2^{35}}}\Big(1+\frac{2}{\varepsilon}\Big).$$
\end{corol}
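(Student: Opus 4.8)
Voici le plan de démonstration. La stratégie est la méthode de Vojta rendue effective (dans l'esprit des versions explicites dues à Bombieri, de Diego ou Rémond), alimentée par les trois r\'esultats pr\'ec\'edents : le Corollaire \ref{borne torsion} absorbe les points de torsion, le Th\'eor\`eme \ref{minoration dimension 2} (Version A) fournit la minoration de hauteur qui rend le comptage des ``petits'' points ind\'ependant de la courbe, et le Th\'eor\`eme \ref{faltings maj} contr\^ole le seuil s\'eparant petits et grands points. On commence par plonger $C$ dans sa jacobienne $A$ par $j\colon P\mapsto \mathrm{cl}\big((P)-(\infty)\big)$, o\`u $\infty$ est le point de Weierstrass rationnel fourni par le mod\`ele $y^{2}=F(x)$ avec $\deg(F)=5$ ; on a $j\circ\iota=[-1]\circ j$ ($\iota$ l'involution hyperelliptique), $j$ est injectif, et $j^{*}(2\Theta)$ est un diviseur ample sym\'etrique de degr\'e $2g=4$ sur $C$. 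On travaille alors dans le r\'eseau de Mordell--Weil $V=A(k)\otimes_{\mathbb{Z}}\mathbb{R}$, de rang $r=\rang A(k)$, muni de la forme quadratique $\widehat{h}_{A,2\Theta}$ (d\'efinie positive modulo torsion puisque $A$ est g\'eom\'etriquement simple ; comme $A$ est simple, un point de $A(k)$ est non de torsion si et seulement si $\overline{\mathbb{Z}\!\cdot\! x}=A$). L'hypoth\`ese $\Trinf(A)\geq(5/3+\varepsilon)\log(\Nk(D)/\sinf(A))$ entra\^ine a fortiori $\Trinf(A)>\tfrac53\log(\Nk(D)/\sinf(A))$, donc le Corollaire \ref{borne torsion} s'applique et majore le nombre de $P$ avec $j(P)$ de torsion par $2^{4}\cdot 10087^{16\cdot 3^{16}d}$ (c'est le facteur ``$+1$'' de l'exposant final, ind\'ependant de $r$) ; plus g\'en\'eralement, la fibre de l'application $C(k)\to A(k)/A(k)_{\mathrm{tors}}\hookrightarrow V$ au-dessus d'un point est de cardinal $\leq\Card A(k)_{\mathrm{tors}}$, et il suffit donc de majorer le cardinal de l'image de $C(k)$ dans $V$.

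Pour $P$ avec $j(P)$ non de torsion, le Th\'eor\`eme \ref{minoration dimension 2} (Version A, l'alternative $(i)$ \'etant exclue) donne
\[
\widehat{h}_{A,2\Theta}(j(P))\;\geq\;\kappa:=c_{1}(d)\Big(\Trinf(A)-\tfrac53\log\tfrac{\Nk(D)}{\sinf(A)}\Big)\;\geq\;c_{1}(d)\,\frac{\varepsilon}{5/3+\varepsilon}\,\Trinf(A)\;>\;0,
\]
et la m\^eme minoration vaut pour $\widehat{h}_{A,2\Theta}(j(P)-j(Q))$ d\`es que $[P-Q]=j(P)-j(Q)$ n'est pas de torsion, c'est-\`a-dire d\`es que $j(P)$ et $j(Q)$ ont des images distinctes dans $V$ ; deux images distinctes sont donc \`a distance $\geq\sqrt{\kappa}$ dans $V$. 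Par ailleurs le Th\'eor\`eme \ref{faltings maj} donne $\hFprime(A/k)\leq c_{3}\Trinf(A)+c_{4}\log(\Nk(D)/\sinf(A))\leq (c_{3}+c_{4})\Trinf(A)$ sous notre hypoth\`ese, d'o\`u une majoration $h(C)\ll\Trinf(A)$ de la hauteur de la courbe, et donc du seuil $c_{\mathrm{pet}}\ll\Trinf(A)$ qui s\'epare petits et grands points dans la m\'ethode de Vojta. Le rapport $c_{\mathrm{pet}}/\kappa$ est alors $\ll c_{1}(d)^{-1}\cdot\tfrac{5/3+\varepsilon}{\varepsilon}\leq c_{1}(d)^{-1}\big(1+\tfrac2\varepsilon\big)$ : c'est l\`a qu'appara\^it le facteur $1+2/\varepsilon$.

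Il reste \`a appliquer la m\'ethode de Vojta \`a l'image de $C(k)$ dans $V$. Les ``petits'' points ($\widehat{h}_{A,2\Theta}(j(P))\leq c_{\mathrm{pet}}$) forment, d'apr\`es ce qui pr\'ec\`ede, une partie $\sqrt{\kappa}$-s\'epar\'ee d'une boule de rayon $\sqrt{c_{\mathrm{pet}}}$ de $V$, donc de cardinal $\leq\big(1+2\sqrt{c_{\mathrm{pet}}/\kappa}\big)^{r}\leq C(d,\varepsilon)^{r}$. Les ``grands'' points sont trait\'es par l'in\'egalit\'e de Vojta (lemme de Siegel, lemme de Dyson / B\'ezout arithm\'etique, descente \`a la Roth) : pour deux grands points de hauteurs de rapport born\'e on applique le principe des \'ecarts homog\`ene de Mumford en genre $2$ (angle $\geq\theta_{0}$, o\`u l'on peut prendre $\cos\theta_{0}\leq 3/4$) ; pour deux grands points de hauteurs de rapport grand l'in\'egalit\'e de Vojta force aussi un angle $\geq\theta_{0}$ ; une borne d'empilement sph\'erique dans $\mathbb{R}^{r}$ donne alors au plus $C_{0}^{\,r}$ coquilles non vides et au plus $C_{1}^{\,r}$ grands points par coquille, d'o\`u $\leq(C_{0}C_{1})^{r}$ grands points. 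En regroupant, $\Card C(k)\leq \Card A(k)_{\mathrm{tors}}\cdot\big(C(d,\varepsilon)^{r}+(C_{0}C_{1})^{r}\big)$, et l'on majore grossi\`erement toutes les constantes num\'eriques ($C_{0}$, $C_{1}$, $C(d,\varepsilon)/(1+2/\varepsilon)$, la borne de torsion, $c_{1}(d)^{-1}=d\cdot 10087^{8\cdot3^{16}d}/0{,}03$, les constantes de comparaison de hauteurs) par la puissance $10087^{(d+1)2^{35}}$ --- l'exposant $2^{35}$ ayant \'et\'e choisi bien plus grand que $16\cdot 3^{16}$ et que tout ce qui intervient --- pour obtenir $\Card C(k)\leq\big(10087^{(d+1)2^{35}}(1+2/\varepsilon)\big)^{\rang A(k)+1}$.

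Le point d\'elicat est double. D'une part, il faut disposer d'une version \emph{effective et homog\`ene} de la m\'ethode de Vojta en genre $2$, o\`u l'on travaille exclusivement avec la hauteur canonique $\widehat{h}_{A,2\Theta}$ afin que les termes d'erreur $O(1)$ --- naturellement de l'ordre de $h(C)\asymp\Trinf(A)$, donc comparables \`a $\kappa$ --- soient domin\'es par $\kappa$ gr\^ace \`a l'hypoth\`ese archim\'edienne : c'est pr\'ecis\'ement cette hypoth\`ese, conjugu\'ee au Th\'eor\`eme \ref{faltings maj}, qui permet de rendre $c_{2}$ ind\'ependant de la courbe. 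D'autre part, l'articulation de l'in\'egalit\'e profonde de Vojta avec le principe des \'ecarts et la borne d'empilement demande du soin ; le reste (collapse des constantes, transmission de l'hypoth\`ese au Corollaire \ref{borne torsion}, comparaison entre hauteur canonique et hauteur de Weil sur le mod\`ele $y^{2}=F(x)$) est de la comptabilit\'e.
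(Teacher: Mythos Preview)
Your approach is essentially the same as the paper's: the paper simply invokes Proposition~1.10 of \cite{Paz2} (itself resting on the R\'emond/de~Diego/David--Philippon machinery), feeding in the Lang--Silverman type constant obtained here; you have unpacked what that black box does, namely the Vojta--Mumford counting with the torsion bound (Corollaire~\ref{borne torsion}), the height lower bound (Th\'eor\`eme~\ref{minoration dimension 2}), and the Faltings-height control (Th\'eor\`eme~\ref{faltings maj}) as inputs. The paper's proof is a one-line citation, while your sketch fills in the structure of that citation, so there is no substantive divergence.
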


\subsection{Domaine archim\'edien}\label{domaine archim}

Nous allons mettre en place une strat{\'e}gie de minoration proche de celle adopt{\'e}e par M. Hindry et J. Silverman dans le cas $g=1$ en utilisant la d\'ecomposition de la hauteur de N\'eron-Tate en hauteurs locales. Ce qui rend cette d{\'e}marche possible en dimension 1 est l'existence de formules explicites et relativement manipulables pour ces hauteurs locales. Bien qu'on ne dispose pas de formule dans le cas g{\'e}n{\'e}ral, on peut encore obtenir un \'enonc\'e en dimension 2.

Commençons par fixer le domaine de Siegel \label{domaine de Siegel} : soit $v$ une place archim\'edienne du corps $k$. On notera $H_{g}$ l'espace de Siegel associ\'e aux vari\'et\'es
ab\'eliennes sur $\bar{k}_{v}$ principalement polaris\'ees de dimension $g$ et munies d'une base symplectique (on pourra consulter \cite{LaBi} page 213). C'est l'ensemble des matrices $\tau=\tau_{v}$ de taille $g\times g$ sym\'etriques \`a coefficients complexes et v\'erifiant la condition $\Ima \tau >0$ (i.e. d\'efinies positives). Cet espace est muni d'une action transitive du groupe symplectique $\Gamma = \Sp(2g,\mathbb{R})$ donn\'ee par :
$$\left[ \begin{array}{cc}
A & B   \\
C & D   \\
\end{array}\right]\!\cdot\!\tau=(A\tau+B)(C\tau+D)^{-1}.$$

On consid\`ere alors $F_{g}$ un domaine fondamental pour l'action du sous-groupe $\Sp(2g,\mathbb{Z})$. On peut choisir $F_{g}$ de telle sorte qu'une matrice $\tau$ de ce domaine v\'erifie en particulier les conditions suivantes (voir \cite{Frei} page 34) :

\begin{itemize}
\item[$\bullet$] {S1 : Pour tout $\sigma\in{\Sp_{2g}(\mathbb{Z})}$ on a : $\det(\Ima(\sigma . \tau))\leq \det(\Ima(\tau))$. On dira que $\Ima\tau$ est \emph{maximale} pour l'action de $\Sp_{2g}(\mathbb{Z})$.}

\item[$\bullet$] {S2 : Si $\Ree(\tau)=(a_{i,j})$ alors $|a_{i,j}|\leq \frac{1}{2}$.}

\item[$\bullet$] {S3 : Si $\Ima(\tau)=(b_{i,j})$ alors pour tout $l\in\{1,...,g\}$ et tout $\zeta=(\zeta_{1},...,\zeta_{g})\in{\mathbb{Z}}^{g}$ tel que $\pgcd(\zeta_{1},...,\zeta_{l})=1$ on a $^{t}\zeta \Ima(\tau)\zeta \geq b_{l,l}$. De plus pour tout $i\in\{1,...,g\}$ on a $b_{i,i+1}\geq 0$. On a enfin $b_{g,g}\geq...\geq b_{1,1}\geq \sqrt{3}/2$ et $ b_{i,i}/2\geq|b_{i,j}|$.}
\end{itemize}

En dimension $g=2$ on aura en particulier les in\'egalit\'es, utilis\'ees constamment dans le texte (on note $\tau_{1}=\tau_{11}$ et $\tau_{2}=\tau_{22}$) :

\begin{equation}
\left\{ 
\begin{array}{l}
\Ima\tau_{2}\geq \Ima\tau_{1}\geq 2\Ima\tau_{12}\geq0, \\ 
\Ima\tau_{1}\geq \frac{\sqrt{3}}{2}. \\ 
\end{array}
\right.
\end{equation}

Dans tout le texte, les matrices $\tau$ seront toujours suppos\'ees appartenir au domaine fondamental $F_{2}$. On imposera de plus qu'elles soient de trace maximale. 

Dans la partie 2, suivant directement cette introduction, on d{\'e}compose la hauteur de N{\'e}ron-Tate
en hauteurs locales explicites. On s'inspire pour cela l'article de
E. V. Flynn et N. P. Smart \cite{FlySma}. On minore alors les hauteurs locales aux places finies en utilisant les r\'esultats de M. Stoll de l'article \cite{Stoll1}. La troisi\`eme
partie donne une autre d\'efinition de hauteur locale aux places archim\'ediennes. On r\'eunit les deux normalisations dans la quatri\`eme. Apr\`es avoir effectu\'e ces
minorations place par place,  on r{\'e}unit ces informations dans une
cinqui\`eme partie pour obtenir une minoration globale. On propose dans la sixi\`eme
partie une majoration de la hauteur de Faltings de la
jacobienne d'une courbe de genre 2. La septi\`eme partie regroupe des travaux parall\`eles sur les produits de courbes elliptiques. Enfin, on r\'eunit les r\'esultats
des parties 5, 6 et 7 dans une huiti\`eme partie regroupant trois corollaires : une minoration de la hauteur de N\'eron-Tate par la hauteur de Faltings, une borne sur la torsion des vari\'et\'es ab\'eliennes de dimension 2 et une borne sur le nombre de points rationnels d'une courbe de genre 2.

Terminons cette introduction en redonnant bri\`evement l'argument permettant de d\'eduire la structure des vari\'et\'es ab\'eliennes de dimension 2 principalement polaris\'ees. Soit $(A,\Theta)$ une telle vari\'et\'e. On a $\dime(\Theta)=1$ et $(\Theta)^{(2)}=2!=2$ (par Riemann-Roch, ou bien la formule de Poincar\'e \cite{LaBi} page 328). Si $\Theta$ est une courbe $C$ et $j:C\hookrightarrow \Jac(C)$ le plongement dans la jacobienne, on montre que $j(C)+j(C)$ est birationnellement \'equivalent \`a $A$, ce qui n'est possible que si $C$ est de genre 2 et $A\simeq \Jac(C)$. Si $\Theta=\sum C_{i}$ avec $C_{i}$ des courbes, on a :
$$2=(\Theta)^{(2)}=\sum (C_{i}\!\cdot\! C_{j}),$$
et chaque terme de la somme est un entier naturel. On d\'eduit alors que $\Theta$ est isomorphe \`a la somme de deux courbes, qui de plus sont des translat\'ees de sous-vari\'et\'es ab\'eliennes de A.

\vspace{0.4cm}
\textbf{Remerciements.} Merci \`a M. Hindry, G. R\'emond et J. Silverman pour leurs encouragements et leur int\'er\^et pour ce travail. Merci \`a l'arbitre anonyme de la publication pour ses remarques.

\vspace{0.2cm}

\section{Les hauteurs locales en dimension 2}

L'existence de la d\'ecomposition en hauteurs locales fait l'objet du th\'eor\`eme suivant (voir par exemple \cite{HiSi} page 242) :

\begin{thm} \label{d\'ecomposition}(N\'eron)  Soit $A/k$ une vari\'et\'e ab\'elienne d\'efinie sur un corps de
nombres $k$. Soit $M_{k}$ l'ensemble des places de $k$. Pour tout diviseur $\mathcal{D}$ sur $A$ on note $A_{\mathcal{D}}=A\backslash\supp(\mathcal{D})$. Alors pour toute place
$v\in{M_{k}}$ il existe une fonction hauteur locale, unique \`a une fonction additive constante pr\`es :
$$\widehat{\lambda}_{\mathcal{D},v}:A_{\mathcal{D}}(k_{v})\longrightarrow \mathbb{R}, $$
appel\'ee hauteur locale canonique, d\'ependant du choix de $\mathcal{D}$ et
v\'erifiant les propri\'et\'es suivantes, avec $\gamma_{i,v}$ des constantes d\'ependant de $v$ :

\begin{enumerate}[(i)]
\item $ \widehat{\lambda}_{\mathcal{D}_{1}+\mathcal{D}_{2},v}= \widehat{\lambda}_{\mathcal{D}_{1},v}+
  \widehat{\lambda}_{\mathcal{D}_{2},v}+\gamma_{1,v} $.
\item Si $\mathcal{D}=\divi(f)$, alors $\widehat{\lambda}_{\mathcal{D},v}=v\circ f+\gamma_{2,v}$.
\item Si $\Phi:B\rightarrow A$ est un morphisme de vari\'et\'es ab\'eliennes
  alors on a la relation : $ \widehat{\lambda}_{\Phi^{*}\mathcal{D},v}=
  \widehat{\lambda}_{\mathcal{D},v}\circ \Phi+\gamma_{3,v}$.
\item Soit $Q \in{A(k)}$ et soit $t_{Q}: A \rightarrow A $ la translation
  par $Q$. Alors on a la relation : $\widehat{\lambda}_{t_{Q}^{*}\mathcal{D},v}=
  \widehat{\lambda}_{\mathcal{D},v}\circ t_{Q}+\gamma_{4,v}$.
\item Soit $\widehat{h}_{A,\mathcal{D}}$ la hauteur globale canonique de $A$ associ\'ee
  \`a $\mathcal{D}$. Il existe une constante $c$ telle que, pour tout $P\in{A_{\mathcal{D}}(k)}$ :
  $$ \widehat{h}_{A,\mathcal{D}}(P)=\sum_{v\in{M_{k}}}n_{v} \widehat{\lambda}_{\mathcal{D},v}(P) + c. $$
\item{Si $\mathcal{D}$ v\'erifie $[2]^{*}\mathcal{D}=4\mathcal{D}+\divi(f)$ pour $f$ une fonction rationnelle sur $A$ et si l'on fixe les constantes de telle sorte qu'on ait la relation $\widehat{\lambda}_{\mathcal{D},v}([2]P)=4\widehat{\lambda}_{\mathcal{D},v}(P)+v(f(P))$, alors :
$$\widehat{h}_{A,\mathcal{D}}(P)=\sum_{v\in{M_{k}}}n_{v} \widehat{\lambda}_{\mathcal{D},v}(P).$$
(Notons que $f$ est unique \`a multiplication par une constante $a\in{k^{*}}$ pr\`es.) }
\end{enumerate} 

\end{thm}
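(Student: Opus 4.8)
The plan is to fix a single place $v\in M_k$, build the local height there by a Weil/N\'eron patching construction, upgrade it to a canonical object via a Tate-style $[2]$-limit, and only at the end recover the global assertions (v) and (vi) by summing over $v$ and invoking the product formula, essentially as carried out in \cite{HiSi}.

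For a fixed $v$ and a divisor $\mathcal{D}$ on $A$, I would first construct a N\'eron function $\lambda_{\mathcal{D},v}\colon A_{\mathcal{D}}(k_v)\to\mathbb{R}$: cover $A$ by affine opens on which $\mathcal{D}$ is principal, take $-v\circ f$ for a local equation $f$ on each, and glue these using a partition of unity (for archimedean $v$) or a reduction/triangulation argument (for non-archimedean $v$), as in Weil's original decomposition. Such a $\lambda_{\mathcal{D},v}$ is unique up to addition of a function that extends continuously across $\supp(\mathcal{D})$ to all of $A(k_v)$; since $k_v$ is a local field, $A(k_v)$ is compact, so that ambiguity is a \emph{bounded} function. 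At this stage one obtains the ``naive'' versions of (i)--(iv) with the $\gamma_{i,v}$ allowed to be bounded functions, together with the naive decomposition of the Weil height $h_{A,\mathcal{D}}(P)=\sum_{v\in M_k}n_v\lambda_{\mathcal{D},v}(P)+O(1)$.

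Next comes the canonical normalization. By the theorem of the cube a symmetric divisor ($[-1]^{*}\mathcal{D}=\mathcal{D}$) satisfies $[2]^{*}\mathcal{D}=4\mathcal{D}+\divi(f_{\mathcal{D}})$ and an antisymmetric one satisfies $[2]^{*}\mathcal{D}=2\mathcal{D}+\divi(f_{\mathcal{D}})$; since $2\mathcal{D}$ is always linearly equivalent to a sum of a symmetric and an antisymmetric divisor, property (i) lets me treat the two cases separately. In the symmetric case the ``cube defect'' $\psi(P):=\lambda_{\mathcal{D},v}([2]P)-4\lambda_{\mathcal{D},v}(P)-v(f_{\mathcal{D}}(P))$ has no singularity along $\supp(\mathcal{D})$ by construction, hence extends to a continuous, and therefore bounded, function on $A(k_v)$; so the sequence $4^{-n}\bigl(\lambda_{\mathcal{D},v}([2^{n}]P)+\sum_{j=0}^{n-1}4^{\,j}v(f_{\mathcal{D}}([2^{j}]P))\bigr)$ is uniformly Cauchy on compacta of $A_{\mathcal{D}}(k_v)$, and its limit $\widehat{\lambda}_{\mathcal{D},v}$ is a local height that differs from $\lambda_{\mathcal{D},v}$ by a bounded function and satisfies exactly $\widehat{\lambda}_{\mathcal{D},v}([2]P)=4\widehat{\lambda}_{\mathcal{D},v}(P)+v(f_{\mathcal{D}}(P))$; the antisymmetric case is identical with $4$ replaced by $2$. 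Uniqueness up to an additive constant then follows: the difference $\phi$ of two such functions extends continuously across the support, hence is bounded, and satisfies $\phi([2]P)=4\phi(P)+\gamma$ for a constant $\gamma$; replacing $\phi$ by $\phi-\gamma/3$ gives the homogeneous relation, and iterating $\phi-\gamma/3=4^{-n}(\phi\circ[2^{n}]-\gamma/3)\to0$ forces $\phi$ to be constant. Properties (i)--(iv) are now formal, since two canonical local heights attached to the same divisor differ by a constant: for (i) use $f_{\mathcal{D}_1}f_{\mathcal{D}_2}$ after splitting into symmetric and antisymmetric parts; for (ii) note that $v\circ f$ is itself a N\'eron function for $\divi(f)$ and that $\divi(f\circ[2])=4\,\divi(f)+\divi\bigl((f\circ[2])/f^{4}\bigr)$ gives the correct $[2]$-behaviour; for (iii) use $\Phi\circ[2]=[2]\circ\Phi$, so $\widehat{\lambda}_{\mathcal{D},v}\circ\Phi$ is a N\'eron function for $\Phi^{*}\mathcal{D}$ with the right functional equation; for (iv) use that $t_Q^{*}\mathcal{D}-\mathcal{D}$ is algebraically trivial, which reduces the translation case to the antisymmetric normalization. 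For (v), pass the naive decomposition through Tate's limit: writing $\mathcal{D}=\mathcal{D}^{+}+\mathcal{D}^{-}$ and using $h_{A,\mathcal{D}^{\pm}}=\sum_v n_v\lambda_{\mathcal{D}^{\pm},v}+O(1)$, the bounded errors are killed by the factors $4^{-n}$, resp. $2^{-n}$, whence $\widehat{h}_{A,\mathcal{D}}(P)=\sum_v n_v\widehat{\lambda}_{\mathcal{D},v}(P)+c$ once arbitrary constants are restored. For (vi), impose on each $\widehat{\lambda}_{\mathcal{D},v}$ the relation with the \emph{same} function $f$ (which is unique up to $k^{*}$, its divisor being determined and $A$ projective); then $g(P):=\sum_v n_v\widehat{\lambda}_{\mathcal{D},v}(P)$ satisfies $g([2]P)=4g(P)+\sum_v n_v v(f(P))=4g(P)$ by the product formula, so by (v) $g=\widehat{h}_{A,\mathcal{D}}+c$ with $c=4c$, i.e. $c=0$.

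The main obstacle is the local analytic input at the start: constructing $\lambda_{\mathcal{D},v}$ with the precise singularity type $-v\circ f+(\text{continuous})$ along each component of $\supp(\mathcal{D})$, and showing that the cube defect $\psi$ genuinely extends continuously across the support --- equivalently, that the Weil/N\'eron local height satisfies the quasi-parallelogram relation with an error that is continuous, hence bounded by compactness of $A(k_v)$. Once this local statement is secured, the $[2]$-limit, the uniqueness argument, and the derivation of (i)--(vi) are all formal bookkeeping with constants and the product formula.
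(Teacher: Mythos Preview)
The paper does not give a proof of this theorem: it is stated as a classical result of N\'eron, with the parenthetical reference ``voir par exemple \cite{HiSi} page 242'' before the statement, and the text then moves on immediately to the Flynn--Smart construction. So there is no ``paper's own proof'' to compare against.

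Your sketch is essentially the standard argument that the cited reference \cite{HiSi} carries out: build a Weil/N\'eron function $\lambda_{\mathcal{D},v}$ by local equations, observe that the duplication defect extends continuously (hence is bounded by compactness of $A(k_v)$), run Tate's telescoping limit to get the canonical $\widehat{\lambda}_{\mathcal{D},v}$, deduce uniqueness up to a constant, and then (i)--(iv) are formal while (v)--(vi) follow from the naive decomposition plus the product formula. This is correct in outline and matches what the citation points to; there is nothing further to compare.
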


Les deux premiers paragraphes sont directement issus de l'article de E.V. Flynn et N. Smart \cite{FlySma}. On en donne ici une reformulation un peu plus g\'eom\'etrique en omettant la plupart des preuves. Remarquons que l'article original \cite{FlySma} est \'ecrit pour $k=\mathbb{Q}$, mais on peut tout utiliser, \textit{mutatis mutandis}, sur un corps de nombres $k$. Ceci est en fait d\'ecrit dans les articles de M. Stoll \cite{Stoll1} et \cite{Stoll2}. 

\subsection{Jacobienne et surface de Kummer}

On se donne une courbe $C$ de genre 2 sur un corps de nombres $k$. On sait que $C$ est hyperelliptique, elle poss\`ede donc six \emph{points de Weierstrass}, les points fixes de l'involution hyperelliptique. On fait l'hypoth\`ese que l'un de ces points, appel\'e $P_{0}$, est rationnel sur $k$. On note $\cl$ pour la classe rationnelle d'un diviseur. On d\'efinit alors le plongement jacobien de la courbe $C$ dans sa jacobienne :
$$j : C\hookrightarrow \Jac(C)$$
$$\hspace{0.6cm}P\mapsto \cl\Big((P)-(P_{0})\Big).$$

On d\'efinit alors $\Theta=j(C)$.

\begin{Remarque} Ce choix de $P_{0}$ permet d'affirmer que : $P\in{\Theta} \iff -P\in{\Theta}.$
\end{Remarque}

E.V. Flynn et N. Smart explicitent dans l'article \cite{FlySma} un choix possible des fonctions hauteurs locales lorsque
$A$ est la jacobienne d'une courbe
de genre 2. Nous suivrons pour cela leur normalisation 
pour les hauteurs locales. Le
diviseur qu'ils utilisent est $\mathcal{D}=2\Theta$ lorsque le mod\`ele hyperelliptique est de degr\'e 5. Soulignons que ce choix de diviseur est unique \`a translation par un point de $2$-torsion pr\`es.

Soient $k$ un corps de nombres et $C/k$ une courbe de genre 2. On peut identifier la jacobienne $\Jac(C)$ au carr\'e sym\'etrique de la courbe, $\Sym ^{2}(C)$, dans lequel il faut contracter un diviseur (qui correspond au diviseur exceptionnel d'un \'eclatement d'un point de $\Jac(C)$). Ce proc\'ed\'e est bien d\'ecrit dans \cite{Mum4} page 52. La surface de Kummer $K$ est d\'efinie comme le quotient $\Jac(C)/(\pm 1)$. Elle se plonge dans $\mathbb{P}^{3}$. Voyons cela plus en d\'etails : comme on a suppos\'e que $P_{0}$ est un point de Weierstrass rationnel sur $k$, on peut se donner un mod\`ele hyperelliptique de la courbe $C$ entier sur $k$ de degr\'e impair, avec $a_{5}\neq 0$ et sans racine multiple :
$$C : y^{2}=F(x)=a_{5}x^{5}+a_{4}x^{4}+a_{3}x^{3}+a_{2}x^{2}+a_{1}x+a_{0}.$$
Contrairement au mod\`ele plus g\'en\'eral de degr\'e 6, il n'y a dans ce mod\`ele qu'un point \`a l'infini : $P_{0}=\infty$. L'\'etude de \cite{FlySma} est men\'ee en degr\'e 6, le cas quintique est plus simple et inclus dans leur travail (il suffit de sp\'ecialiser $f_{6}=0$ dans leur notation).

On note $A=\Jac(C)$ la jacobienne de $C$. L'involution
hyperelliptique donn\'ee sur la courbe $C$ par $i:(x,y) \rightarrow (x,-y)$ induit la multiplication par $[-1]$ sur $A$. On consid\`ere le quotient de $A$ par $(\pm 1)$. La surface $K$ est donn\'ee par l'\'equation quartique homog\`ene suivante (donn\'ee dans \cite{Fly} ou \cite{CasFly} page 19 et reprise dans l'annexe de \cite{Paz}) :
$$R(k_{1},k_{2},k_{3})k_{4}^{2}+S(k_{1},k_{2},k_{3})k_{4}+T(k_{1},k_{2},k_{3})=0.$$

On peut donner les points de $K$ par l'application :
$$\kappa:\mathrm{Sym}^{2} (C)\longrightarrow K\subset \mathbb{P}^{3}$$ 
$$\kappa:P=(P_{1},P_{2})\longmapsto K_{P}=(k_{1},k_{2},k_{3},k_{4}),$$
o\`u on a d\'efini pour un point $P=((x_{1},y_{1}),(x_{2},y_{2}))$ hors du support du diviseur $\Theta$ :

\begin{center} \;$\left\{ 
\begin{tabular}{l}
$k_{1}=1,$ \\ 
$k_{2}=x_{1}+x_{2},$ \\ 
$k_{3}=x_{1}x_{2},$\\
$k_{4}=\left(\begin{tabular}{l}
$2a_{0}+a_{1}(x_{1}+x_{2})+2a_{2}x_{1}x_{2}+a_{3}(x_{1}^{2}x_{2}+x_{1}x_{2}^{2})$\\
$+ 2a_{4}x_{1}^{2}x_{2}^{2}+a_{5}(x_{1}^{3}x_{2}^{2}+x_{1}^{2}x_{2}^{3})-2y_{1}y_{2}$
\end{tabular}\right)/(x_{1}-x_{2})^{2}$\\
\end{tabular}
\right.$
\end{center}

Pour un point $P=((x_{1},y_{1}),\infty)$ : $k_{1}=0,\quad k_{2}=1, \quad k_{3}=x_{1}, \quad k_{4}=a_{5}x_{1}^{2}.$
Le diviseur $\mathcal{D}'$ sur $\Sym^{2}(C)$ associ\'e \`a $(k_{1}=0)$ est donn\'e par $\mathcal{D}'\!=\!2(C\times \{\infty\})$. Ce diviseur $\mathcal{D}'$ s'envoie donc \textit{via} l'application $\pi:\Sym^{2}(C)\rightarrow \Jac(C)$ sur le diviseur $\mathcal{D}=2\Theta$.

Choisissons alors un plongement $k\hookrightarrow_{v} \mathbb{C}$. Les points complexes de $(\Jac(C),\Theta)$ forment un tore complexe qu'on normalise ainsi : $\Jac(C)(\mathbb{C})\simeq A_{\tau_{v}}(\mathbb{C})=\mathbb{C}^{2}/\mathbb{Z}^{2}+\tau_{v}\mathbb{Z}^{2}$, avec $\tau_{v}$ une matrice obtenue en calculant les p\'eriodes de la surface de Riemann compacte $C(\mathbb{C})$. Le diviseur $\Theta (\mathbb{C})$ est alors identifi\'e \`a la courbe $C(\mathbb{C})\hookrightarrow A_{\tau_{v}}(\mathbb{C})$.

\subsection{Hauteurs}\label{normalisation}

On garde le cadre pr\'ec\'edent et on d\'efinit suivant \cite{FlySma} les hauteurs naïve et canonique d'un point $P=(P_{1},P_{2})\in{A(k)}$. On va normaliser le point projectif $K_{P}$ en fixant la premi\`ere coordonn\'ee non nulle comme \'etant \'egale \`a 1 (c'est la normalisation choisie dans \cite{FlySma}). On peut donc d\'efinir la hauteur naïve comme \'etant :
$$h_{K}(P)=h(K_{P})=\sum_{v\in{M_{k}}}n_{v}\log \max_{i\in\{1,..,4\}}(|k_{i}|_{v}),$$
et la hauteur canonique associ\'ee :
$$\widehat{h}_{A,2\Theta}(P)=\lim_{n\rightarrow +\infty}\frac{h(K_{[2^{n}]P})}{4^{n}},$$
o\`u on note $K_{[2^{n}]P}$ l'image sur la surface de Kummer de la multiplication par $[2^{n}]$ d'un point $P$ de la jacobienne ; la surface de Kummer n'a plus la structure de groupe de la jacobienne, mais on peut passer l'application au quotient :

\dgARROWLENGTH=0.5cm 

\[
\begin{diagram}
\node{A}
\arrow[3]{e,t}{[2^{n}]}
\arrow[3]{s}
\node[3]{A}
\arrow[3]{s}
\\\\\\
\node{A/(\pm 1)}
\arrow[3]{e}
\node[3]{A/(\pm 1)}
\end{diagram}
\]

On a choisi de travailler avec la multiplication par $[2]$. En effet il existe des formules explicites de duplication sur la surface de Kummer : prenons un point $K_{P}$, alors la formule de duplication est donn\'ee par des polynômes homog\`enes explicites (donn\'es sur le site internet de V. Flynn et reproduites en annexe de \cite{Paz}) not\'es $\delta(K_{P})=(\delta_{1},\delta_{2},\delta_{3},\delta_{4})$ de degr\'e total 4 en les $k_{i}$. Avec la normalisation choisie ici, on aura donc (lorsque $P$ et $[2]P$ sont hors du support du diviseur $\Theta$): 
$$K_{[2]P}=\frac{\delta(K_{P})}{\delta_{1}(K_{P})}.$$

La hauteur locale naïve en une place $v$ est d\'efinie par :
$$\lambda_{2\Theta,v}:(k_{1},k_{2},k_{3},k_{4})\longmapsto \log \max_{i\in{\{1,..,4\}}}(|k_{i}|_{v}).$$

\begin{Remarque} Cette construction doit \^etre vue comme l'analogue de la hauteur locale sur une courbe elliptique $\lambda_{v}(P)=\log|x(P)|_{v}$, o\`u $x(P)$ est la coordonn\'ee d'un point $P$ dans un mod\`ele de Weierstrass.
\end{Remarque}

Calculons alors :

\begin{tabular}{lll}
$\displaystyle{\lambda_{2\Theta,v}(K_{[2]P})-4\lambda_{2\Theta,v}(K_{P})}$ & $=$ & $\displaystyle{\log\frac{\max_{i\in{\{1,..,4\}}}(|k_{i}([2]P)|_{v})}{\max_{i\in{\{1,..,4\}}}(|k_{i}(P)|^{4}_{v})}}$\\
\\

& $=$ & $\displaystyle{-\log|\delta_{1}(K_{P})|_{v}+\log\frac{\max_{i\in{\{1,..,4\}}}(|\delta_{i}(K_{P})|_{v})}{\max_{i\in{\{1,..,4\}}}(|k_{i}(P)|^{4}_{v})}.}$\\
\\

\end{tabular}

Toujours en suivant \cite{FlySma} on d\'efinit la hauteur locale canonique d'un point $P\in{A_{2\Theta}(k)}$ comme suit, en posant tout d'abord :
$$E_{v}(K_{P}):=\frac{\max(|\delta_{i}(K_{P})|_{v})}{\max(|k_{i}(P)|_{v}^{4})}$$
et :
$$\mu_{2\Theta,v}(K_{P}):=\sum_{n=0}^{+\infty}\frac{1}{4^{n+1}}\log\Big(E_{v}(K_{[2^{n}]P})\Big).$$
\begin{Remarque} Cette quantit\'e $\mu_{2\Theta,v}(K_{P})$ ne d\'epend pas de la normalisation du point projectif. Il est int\'eressant de remarquer que la preuve du lemme 3 de \cite{FlySma} utilise une normalisation diff\'erente du reste de l'article.
\end{Remarque}

Alors on d\'efinit la hauteur locale canonique :
$$\widehat{\lambda}_{2\Theta,v}(P)=\lambda_{2\Theta,v}(K_{P})+\mu_{2\Theta,v}(K_{P}).$$
Lorsque $K_{[2]P}$ est lui aussi hors du support du diviseur $2\Theta$, cette hauteur locale canonique v\'erifie l'\'equation fonctionnelle :
$$\widehat{\lambda}_{2\Theta,v}([2]P)-4\widehat{\lambda}_{2\Theta,v}(P)=-\log|\delta_{1}(K_{P})|_{v}=v(f(P)),$$
avec $f(P):=\delta_{1}(K_{P})$ et $\divi(f)=[2]^{*}(2\Theta)-4(2\Theta)$.

D'apr\`es le th\'eor\`eme 4 de l'article \cite{FlySma} (qui ne d\'epend pas de la normalisation projective choisie pour le point $P$) on a bien pour $P\in{A_{\Theta}(k)}$ (i.e. hors du support du diviseur $\Theta$) :
$$\widehat{h}_{A,2\Theta}(P)=\sum_{v\in{M_{k}}}n_{v}\widehat{\lambda}_{2\Theta,v}(P).$$

\section{Une autre hauteur locale archim\'edienne}

On donne dans cette partie une autre normalisation des hauteurs locales archim\'ediennes, grâce \`a l'utilisation des fonctions th\^eta. Ce lien est donn\'e par A. N\'eron, voir par exemple l'article fondateur \cite{Neron} page 329.

\subsection{D\'efinition}\label{d\'efinition fonctions th\^eta}

On commence ce paragraphe par rappeler la d\'efinition des fonctions th\^eta : soient $Z\in{\mathbb{C}^{2}}$ et $\tau\in{F_{2}}$ :
$$\theta_{a,b}(Z)=\sum_{n\in{\mathbb{Z}^{2}}}e^{2i\pi \left(\frac{1}{2}\, ^{t}(n+a)\tau(n+a)+\, ^{t}(n+a)(Z+b)\right)},$$ o\`u $a,b\in{\frac{1}{2}\mathbb{Z}^{2}/\mathbb{Z}^{2}}$ forment le \emph{vecteur caract\'eristique} de la fonction th\^eta.

Tout vecteur complexe $Z$ peut se d\'ecomposer en $Z=X+\tau Y$ avec $X,Y\in{\mathbb{R}^{2}}$. 

Le th\'eor\`eme de Riemann (voir par exemple \cite{LaBi} page 330) montre que les points complexes du diviseur $\Theta$ sont les z\'eros d'une fonction th\^eta avec caract\'eristique (la caract\'eristique fixant le point de torsion par lequel il faut \'eventuellement translater, voir par exemple \cite{Mum4} page 60 et page 69 et \cite{Mum1} page 164).

En se reportant \`a l'analyse men\'ee dans \cite{Mum1} page 164 et \cite{Mum2} page 3.80-82, on peut identifier le vecteur caract\'eristique comme \'etant $[a,b]=[1/2, \; 1/2 , \; 1, \; 1/2]$. C'est aussi le choix qui est fait dans \cite{Yosh}. Il est de plus \'equivalent de prendre la troisi\`eme coordonn\'ee \'egale \`a z\'ero.

Fixons alors $[a,b]=[1/2, \; 1/2, \; 0, \; 1/2].$ Cette
caract\'eristique est impaire, la fonction th\^eta consid\'er\'ee
v\'erifie en particulier $\theta_{a,b}(0)=0$. Son diviseur est $\Theta(\mathbb{C})$ et il contient $O$ dans son support.

Soit $k$ un corps de nombres. Soient $C/k$ une courbe de genre 2 et $A=\Jac(C)$ sa jacobienne, polaris\'ee par $\Theta$. Soit $v$ une place archim\'edienne et soit $\tau_{v}$ l'\'el\'ement de $F_{2}$ correspondant \`a $(A(\bar{k_{v}}),\Theta)$. On peut alors donner la d\'efinition suivante :

\begin{propdef}\label{autre hauteur archim} \`A une constante pr\`es, la hauteur locale associ\'ee au diviseur $\Theta$ pour la place $v\in{M_{k}}$ archim\'edienne peut s'exprimer comme suit, pour tout point $P$ hors du support du diviseur $\Theta$ et toute coordonn\'ee complexe de $P$ not\'ee $Z(P)$ :
$$\Lambda_{\Theta,v}(P)=-\log\left(\Big|\theta_{a,b}(Z(P))\Big|_{v}\;e^{-\pi \,^{t}\Ima Z(\Ima\tau_{v})^{-1}\Ima Z}\right).$$
\end{propdef}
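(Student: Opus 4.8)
The plan is to show that the function $\Lambda_{\Theta,v}$ defined by the theta quotient satisfies the defining properties of a Néron local height associated to $\Theta$ at the archimedean place $v$, so that by uniqueness (Theorem \ref{d\'ecomposition}) it must agree with $\widehat{\lambda}_{\Theta,v}$ up to an additive constant. Concretely, set $g_{a,b}(Z)=\log\bigl(|\theta_{a,b}(Z)|\,e^{-\pi\,{}^{t}\Ima Z(\Ima\tau_{v})^{-1}\Ima Z}\bigr)$, so that $\Lambda_{\Theta,v}(P)=-g_{a,b}(Z(P))$. First I would check that $\Lambda_{\Theta,v}$ is well defined on $A(\bar k_v)\setminus\supp(\Theta)$, i.e. that the exponential factor exactly cancels the quasi-periodicity of $\theta_{a,b}$: under $Z\mapsto Z+m+\tau_v n$ (with $m,n\in\mathbb Z^2$) one has the classical transformation law $\theta_{a,b}(Z+m+\tau_v n)=\theta_{a,b}(Z)\cdot e^{2i\pi(\cdots)}\cdot e^{-i\pi\,{}^{t}n\tau_v n-2i\pi\,{}^{t}n Z}$, and writing $Z=X+\tau_v Y$, a direct computation shows $|\theta_{a,b}(Z)|\,e^{-\pi\,{}^{t}\Ima Z(\Ima\tau_v)^{-1}\Ima Z}$ is invariant. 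This is the standard fact that $\|s\|(Z)=|\theta_{a,b}(Z)|\,e^{-\pi\,{}^{t}(\Ima Z)(\Ima\tau_v)^{-1}(\Ima Z)}$ is the (metrized) norm of the canonical section of the line bundle $\mathcal{O}(\Theta)$ with the cubical/Moret-Bailly metric, so $-\log\|s\|$ descends to $A(\mathbb C)\setminus\supp(\Theta)$.

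Next I would verify the Néron axioms. Property (ii): near $\Theta$, since $\theta_{a,b}$ vanishes to order $1$ along the divisor, $\Lambda_{\Theta,v}(P)=-\log|f_{\mathrm{loc}}(P)|_v+O(1)$ where $f_{\mathrm{loc}}$ is a local equation of $\Theta$, so $\Lambda_{\Theta,v}+\log|f|$ extends to a continuous (indeed bounded) function near the divisor — this is exactly the log-singularity condition characterizing Néron functions. Property for translation (iv) and pullback (iii): the transformation behavior of $\theta_{a,b}$ under translation by a torsion point, and under isogenies, changes $g_{a,b}$ by an affine-linear term in $Z$ plus a constant, and such terms are absorbed into the $\gamma_{i,v}$ — one uses here that the metric on $\mathcal{O}(\Theta)$ is translation-equivariant up to the cocycle, which is precisely what the Appell–Humbert description gives. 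The functional equation under $[2]$ (property (vi)): from the duplication formula for theta functions, $\theta_{a,b}(2Z)$ is, up to a nowhere-vanishing exponential factor, a product of $\theta$-values at $Z$ shifted by $2$-torsion; taking $-\log\|\cdot\|$ and using that $[2]^*\Theta=4\Theta+\divi(f)$ with $f$ the corresponding rational function, one gets $\Lambda_{\Theta,v}([2]P)=4\Lambda_{\Theta,v}(P)+v(f(P))+\mathrm{const}$, and the constant is pinned down by choosing the normalization as in Theorem \ref{d\'ecomposition}(vi). Finally, having matched all the characterizing properties, the uniqueness clause of Theorem \ref{d\'ecomposition} forces $\Lambda_{\Theta,v}=\widehat{\lambda}_{\Theta,v}+(\text{constant depending on }v)$, which is the assertion "à une constante près".

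The main obstacle, I expect, is not any single computation but rather the bookkeeping of constants and characteristics: one must be careful that the specific odd characteristic $[a,b]=[1/2,1/2,0,1/2]$ chosen in \S\ref{d\'efinition fonctions th\^eta} really cuts out $\Theta(\mathbb C)$ (with $O$ in the support) and not a translate, and that the normalization of the $\theta$-series (the factor $2i\pi$, the half-integer characteristic conventions) is consistent with the Riemann theorem reference. A secondary subtlety is that $\widehat{\lambda}_{\Theta,v}$ itself was only introduced via $\widehat{\lambda}_{2\Theta,v}$ (on the Kummer side, Flynn–Smart), so strictly one compares $2\Lambda_{\Theta,v}$ with $\widehat{\lambda}_{2\Theta,v}$; by axiom (i) of Theorem \ref{d\'ecomposition} this is harmless, but it should be stated. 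Once these normalization points are settled, the proof is the classical identification of the Néron function at an archimedean place with $-\log$ of the Moret-Bailly metric on the theta line bundle, and nothing deeper is needed.
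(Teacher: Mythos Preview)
Your proposal is correct and follows the same route as the paper: verify periodicity (so that $\Lambda_{\Theta,v}$ descends to the torus), check the functional equation under $[2]$, and invoke the uniqueness part of Theorem~\ref{d\'ecomposition}. The paper's own justification is in fact much terser than yours---it simply records, just after the statement, that the exponential factor makes the function $\mathbb{Z}^{2}+\tau_{v}\mathbb{Z}^{2}$-periodic, writes down the relation $\Lambda_{\Theta,v}([2]P)-4\Lambda_{\Theta,v}(P)=-\log\bigl(|\theta_{a,b}(2Z(P))|_{v}/|\theta_{a,b}(Z(P))|_{v}^{4}\bigr)$, and refers to N\'eron's original article for the general framework; your anticipation of the comparison $2\Lambda_{\Theta,v}$ versus $\widehat{\lambda}_{2\Theta,v}$ is exactly what the paper carries out separately in Proposition~\ref{\'egalit\'e locale}.
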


On peut trouver cette id\'ee d'\'ecriture de la hauteur locale dans l'article \cite{Neron}, page 329. Cette fonction est bien une fonction sur le tore, on a corrig\'e la fonction th\^eta de telle sorte qu'elle soit $\mathbb{Z}^{2}+\tau_{v}\mathbb{Z}^{2}$-p\'eriodique. Elle v\'erifie de plus l'\'equation fonctionnelle :
$$\Lambda_{\Theta,v}([2]P)-4\Lambda_{\Theta,v}(P)=-\log\frac{|\theta_{a,b}(2Z(P))|_{v}}{|\theta_{a,b}(Z(P))|^{4}_{v}}=v(f(P)),$$
avec $f(P):=\theta_{a,b}([2]P)/\theta_{a,b}(P)^4$ et $\divi(f)=[2]^{*}\Theta-4\Theta$.

\section{Diff\'erences de hauteurs locales}

On montre dans cette partie comment tirer parti \`a la fois des informations aux places finies issues de la normalisation des hauteurs locales au sens de Flynn-Smart (donn\'ee dans le paragraphe \ref{normalisation}) et des calculs men\'es sur les fonctions th\^eta.

\subsection{Discussion autour de la torsion}

Rappelons la notation $A_{\mathcal{D}}(k)=A(k)\backslash \mathcal{D}(k)$.

\begin{prop}\label{\'egalit\'e locale}

Soit $v$ une place archim\'edienne. Soit $\widehat{\lambda}_{2\Theta,v}$ la hauteur locale canonique normalis\'ee au sens de Flynn-Smart et d\'efinie dans la partie \ref{normalisation}. Soit $\Lambda_{\Theta,v}$ la hauteur locale archim\'edienne d\'efinie en \ref{autre hauteur archim}. Il existe une constante $C_{\infty, v}$ telle que :
\begin{equation}\label{Cinf}
\forall P\in{A_{\Theta}(k)}, \;\; \widehat{\lambda}_{2\Theta,v}(P)=2\Lambda_{\Theta,v}(P)+C_{\infty,v}.
\end{equation}
\end{prop}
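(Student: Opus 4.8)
The plan is to observe that $\widehat{\lambda}_{2\Theta,v}$ and $2\Lambda_{\Theta,v}$ are both N\'eron local height functions attached to the \emph{same} divisor $2\Theta$ at the place $v$, and then to invoke the uniqueness up to an additive constant in Th\'eor\`eme~\ref{d\'ecomposition}. Indeed, by Proposition-D\'efinition~\ref{autre hauteur archim} --- the classical description of the archim\'edienne local height through theta functions, going back to \cite{Neron} --- the function $\Lambda_{\Theta,v}$ is a N\'eron local height for $\Theta$ up to a constant, so by property~(i) of Th\'eor\`eme~\ref{d\'ecomposition} the function $2\Lambda_{\Theta,v}$ is one for $2\Theta$. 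On the other hand the construction of \S\ref{normalisation}, via Th\'eor\`eme~4 of \cite{FlySma}, exhibits $\widehat{\lambda}_{2\Theta,v}$ as a N\'eron local height for $2\Theta$. Two such functions differ by a constant, which is the desired $C_{\infty,v}$.

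For a self-contained argument I would instead set $\phi=\widehat{\lambda}_{2\Theta,v}-2\Lambda_{\Theta,v}$ on $A_{\Theta}(k_v)$ and argue in three steps. (a) Each summand is a local height for $2\Theta$ and hence grows like $-2\log\mathrm{dist}(\cdot,\Theta)$ plus a bounded term near $\Theta$ (boundedness of the correction series $\mu_{2\Theta,v}$ of \S\ref{normalisation}, and of the Gaussian factor $e^{-\pi\,{}^{t}\Ima Z(\Ima\tau_{v})^{-1}\Ima Z}$); so the logarithmic singularities cancel and $\phi$ extends to a bounded continuous function on all of $A(\bar{k}_{v})$. (b) Subtracting the two functional equations
$$\widehat{\lambda}_{2\Theta,v}([2]P)-4\widehat{\lambda}_{2\Theta,v}(P)=v\big(\delta_{1}(K_{P})\big),\qquad\Lambda_{\Theta,v}([2]P)-4\Lambda_{\Theta,v}(P)=v\!\left(\frac{\theta_{a,b}(2Z(P))}{\theta_{a,b}(Z(P))^{4}}\right),$$
one sees that $\delta_{1}(K_{\bullet})$ and $\big(\theta_{a,b}(2Z(\bullet))/\theta_{a,b}(Z(\bullet))^{4}\big)^{2}$ are rational functions with the same divisor $[2]^{*}(2\Theta)-4(2\Theta)$, hence agree up to a constant in $\bar{k}_{v}^{*}$; therefore $\phi([2]P)-4\phi(P)=c_{0}$ for a fixed $c_{0}$, first on the dense set where $P$ and $[2]P$ avoid $\supp(2\Theta)$ and then everywhere by continuity. (c) Put $\psi=\phi+c_{0}/3$, so that $\psi\circ[2]=4\psi$; then $\psi(P)=4^{-n}\psi([2^{n}]P)$ for all $n$, which tends to $0$ since $\psi$ is bounded, so $\psi\equiv0$ and $\phi\equiv-c_{0}/3=:C_{\infty,v}$.

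The point that genuinely requires care is that $\widehat{\lambda}_{2\Theta,v}$ and $2\Lambda_{\Theta,v}$ are local heights for the \emph{very same} divisor $2\Theta$, not merely for algebraically equivalent ones --- without this the cancellation in step (a) fails and $\phi$ is only a local height for a nonzero principal divisor, i.e.\ of the form $v\circ(\text{fonction rationnelle})+\text{const}$, which is not constant. This is exactly why one pins down in \S\ref{d\'efinition fonctions th\^eta} the odd theta characteristic $[a,b]=[1/2,1/2,0,1/2]$, whose zero locus is precisely $\Theta(\mathbb{C})=j(C)$, matching the $\Theta$ used by Flynn--Smart. The remaining verifications (the boundedness claims in step (a), and the density/continuity passage in step (b)) are routine.
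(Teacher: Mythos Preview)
Your proposal is correct and takes essentially the same approach as the paper: the paper's entire proof is the one-line observation that this is a corollary of Th\'eor\`eme~\ref{d\'ecomposition} (uniqueness of N\'eron local heights up to an additive constant), which is precisely your first paragraph. Your additional self-contained argument via the functional equation and boundedness is a valid elaboration that the paper omits, but the underlying idea --- two local heights for the same divisor $2\Theta$ must differ by a constant --- is identical.
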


\begin{proof}
C'est en fait un simple corollaire du th\'eor\`eme \ref{d\'ecomposition}.
\end{proof}

Pour obtenir une minoration de la hauteur locale archim\'edienne normalis\'ee comme dans la partie \ref{normalisation}, il suffira donc de minorer la hauteur locale archim\'edienne $\Lambda_{\Theta,v}$ et la constante $C_{\infty,v}$. Nous allons estimer cette constante en particularisant l'\'equation donn\'ee dans la proposition \ref{\'egalit\'e locale} en des points de torsion. Il faut cependant s'assurer que les points ne sont pas sur le support du diviseur $\Theta$.

Nous allons utiliser le fait suivant :

\begin{prop}(Boxall, Grant)\label{3-torsion}
Soit $\Jac(C)/k$ une jacobienne de dimension 2 sur un corps quelconque, simple et polaris\'ee par le diviseur $\,\Theta=C$. Alors aucun point d'ordre 3 n'est sur le diviseur $\Theta$.
\end{prop}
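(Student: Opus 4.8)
The plan is to translate the statement into a linear equivalence of divisors on $C$ and to use that the Abel-Jacobi embedding $j$ is normalised at the Weierstrass point $P_{0}$.

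First I would note that $\supp(\Theta)=j(C)$, so any point $Q\in{\Jac(C)(\bar k)}$ lying on $\Theta$ has the form $Q=j(P)=\cl\bigl((P)-(P_{0})\bigr)$ for some $P\in{C(\bar k)}$, and $3Q=O$ if and only if $3(P)-3(P_{0})$ is principal on $C_{\bar k}$, i.e.\ $3(P)\sim 3(P_{0})$. Hence it suffices to prove that the only $P\in{C(\bar k)}$ with $3(P)\sim 3(P_{0})$ is $P=P_{0}$; in that case $Q=O$ has order $1$, not $3$, and the proposition follows.

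Next I would use that $P_{0}$ is a Weierstrass point of the genus $2$ curve $C$, so that $2(P_{0})\sim K_{C}$ (the canonical class) and the Weierstrass gap sequence at $P_{0}$ is $\{1,3\}$. Riemann-Roch gives $\dim L(3(P_{0}))=3+1-2=2$, while $\dim L(2(P_{0}))=\dim L(K_{C})=2$; since $L(2(P_{0}))\subseteq L(3(P_{0}))$, these spaces coincide. In the quintic model $y^{2}=F(x)$ this common space is $\langle 1,x\rangle$, so every function in it has polar divisor of degree at most $2$, supported at $P_{0}$. Now suppose $3(P)\sim 3(P_{0})$ with $P\neq P_{0}$, and choose a non-constant $f$ with $\divi(f)=3(P)-3(P_{0})$. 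Then $f\in L(3(P_{0}))=L(2(P_{0}))$, so $f$ has a pole of order at most $2$ at $P_{0}$, contradicting the fact that $\divi(f)=3(P)-3(P_{0})$ forces a pole of order exactly $3$. Therefore $P=P_{0}$, as required.

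The argument is short, and I expect no real obstacle; the only points deserving care are the reduction in the first step and the identification of the gap sequence at $P_{0}$ in the second. What is genuinely used is that $C$ has genus $2$ and that $\Theta$ is the \emph{symmetric} theta divisor, namely the image of $C$ under the Weierstrass embedding (this is also why $O\in\Theta$ and $\Theta=-\Theta$); geometric simplicity of $\Jac(C)$ plays no role in this particular argument, even though it is retained for a clean statement. A purely analytic variant would write a $3$-division point as $Z=\tfrac{1}{3}(r+\tau_{v}s)$ with $r,s\in{\mathbb{Z}^{2}}$ and show $\theta_{a,b}(Z)\neq 0$ for the odd characteristic $[a,b]=[1/2,1/2,0,1/2]$ fixed above; that computation is heavier, so the divisorial argument is preferable.
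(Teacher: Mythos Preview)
Your argument is correct. The identification $L(3P_{0})=L(2P_{0})$, coming from the fact that $3$ is a Weierstrass gap at $P_{0}$, is exactly what is needed, and you are right that simplicity of $\Jac(C)$ plays no role once $\Theta$ is the image of $C$ under a Weierstrass embedding.

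The paper's own proof takes a different route. Rather than Riemann--Roch on $C$, it deduces the proposition from an intersection-theoretic lemma established later (Lemme~\ref{point hors de la courbe}): if $S_{1},S_{2}\subset A$ each have at least three elements and $S_{1}+S_{2}\subset C$, then $\bigcap_{t\in S_{1}}(C-t)$ would be zero-dimensional of degree at most $C\cdot C=2$ yet contain $S_{2}$, a contradiction. Taking $S_{1}=S_{2}=\{O,Q,-Q\}$ for $Q$ of order~$3$ (so that $S_{1}+S_{2}=\{O,\pm Q\}$) and using the symmetry of $\Theta$ gives $Q\notin C$. Your approach is the more elementary and self-contained one for this particular statement; the paper's lemma buys reusability, since the same device is needed again in the pigeonhole step of Proposition~\ref{minoration infinie}. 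Both arguments ultimately rely on the base point being a Weierstrass point (equivalently, on $\Theta=-\Theta$), so your closing remark about what is ``genuinely used'' is accurate.
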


\begin{proof} Il suffit de consulter la preuve de la proposition 1.5 de \cite{BoxGra}. Une deuxi\`eme preuve de cette proposition figure en corollaire du lemme de z\'eros \ref{point hors de la courbe} du pr\'esent texte.
\end{proof}

\begin{Remarque} La situation est compl\`etement diff\'erente sur un produit de courbes elliptiques $E_{1}\times E_{2}$ polaris\'e par $E_{1}\times \{O\}+\{P_{1}\}\times E_{2}$, o\`u $P_{1}$ est un point de $2$-torsion non nul. En effet les points de la forme $(R,O)$, avec $3R=O$, sont des points de $3$-torsion qui sont sur le diviseur. Quitte \`a \'etendre un peu le corps, il y a donc 9 points de $3$-torsion sur les produits de courbes elliptiques ainsi polaris\'es.
\end{Remarque}

Revenons aux vari\'et\'es ab\'eliennes simples en dimension 2. Nous pouvons nous baser sur la derni\`ere proposition et utiliser les points de $3$-torsion dans l'\'etude de la constante de normalisation des hauteurs locales. En particularisant l'\'egalit\'e (\ref{Cinf}) pour $R$ un point de $3$-torsion non nul nous obtenons :
$$C_{\infty,v}=\widehat{\lambda}_{2\Theta,v}(R)-2\Lambda_{\Theta,v}(R),$$
ce qui implique donc que pour tout point $P\in{A_{\Theta}(k)}$ et tout point $R$ d'ordre 3 la diff\'erence est constante (on n'utilise que le fait que $R\notin{\Theta}$ pour l'instant) :
\begin{equation}\label{differences archim}
\widehat{\lambda}_{2\Theta,v}(P)-\widehat{\lambda}_{2\Theta,v}(R)=2\Lambda_{\Theta,v}(P)-2\Lambda_{\Theta,v}(R).
\end{equation}

Posons $\mathcal{T}_{3}$ l'ensemble des points d'ordre 3. C'est un ensemble de cardinal 80, car il y a 81 points de $3$-torsion mais le point $O$ n'est pas d'ordre exactement 3. Nous allons \`a pr\'esent effectuer le calcul clef de notre strat\'egie d'\'etude de la hauteur globale. Commençons par des lemmes concernant les points d'ordre 3.

\begin{Remarque}
Notons que nous allons supposer ici que $k(A[3])=k$. Nous verrons \`a la fin du paragraphe \ref{preuve du thm} (preuve du th\'eor\`eme \ref{minoration dimension 2}) comment nous passer de cette hypoth\`ese.
\end{Remarque}

\begin{lem}\label{3-torsion delta}
Soit $A/k$ une vari\'et\'e ab\'elienne de dimension 2, simple et principalement polaris\'ee. Soit $R\in{A(k)}$ un point de $3$-torsion non nul. Soit $\widehat{\lambda}_{2\Theta,v}$ la hauteur locale normalis\'ee comme dans la partie \ref{normalisation}. Alors on a :
$$\widehat{\lambda}_{2\Theta,v}(R)=\frac{1}{3}\log\Big|\delta_{1}(K_{R})\Big|_{v}.$$
\end{lem}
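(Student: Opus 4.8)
The idea is to exploit the functional equation satisfied by $\widehat{\lambda}_{2\Theta,v}$ together with the fact that $R$ has order exactly $3$, so that $[2]R = -R$. The plan is to start from the defining functional equation of the canonical local height (as given in Theorem \ref{d\'ecomposition}(vi) and recalled explicitly in paragraph \ref{normalisation}):
$$\widehat{\lambda}_{2\Theta,v}([2]P)-4\widehat{\lambda}_{2\Theta,v}(P)=-\log\big|\delta_{1}(K_{P})\big|_{v},$$
valid whenever $P$ and $[2]P$ lie off the support of $\Theta$. By Proposition \ref{3-torsion} (Boxall--Grant), since $A$ is simple, no point of order $3$ lies on $\Theta$; hence both $R$ and $[2]R=-R$ avoid $\supp(\Theta)$ and the functional equation applies at $P=R$. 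This gives
$$\widehat{\lambda}_{2\Theta,v}(-R)-4\widehat{\lambda}_{2\Theta,v}(R)=-\log\big|\delta_{1}(K_{R})\big|_{v}.$$

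The second ingredient is that the local height associated to the \emph{symmetric} divisor $2\Theta$ is even: $\widehat{\lambda}_{2\Theta,v}(-R)=\widehat{\lambda}_{2\Theta,v}(R)$. This follows from Theorem \ref{d\'ecomposition}(iv) applied to $[-1]$ (which fixes $2\Theta$ up to linear equivalence, and in fact up to the chosen normalization, because the Flynn--Smart coordinates $k_i$ are built to be invariant under $[-1]$ — the Kummer map $\kappa$ factors through $A/(\pm1)$, so $K_{-R}=K_R$, whence $\lambda_{2\Theta,v}(K_{-R})=\lambda_{2\Theta,v}(K_R)$ and similarly $\mu_{2\Theta,v}(K_{-R})=\mu_{2\Theta,v}(K_R)$). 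Substituting $\widehat{\lambda}_{2\Theta,v}(-R)=\widehat{\lambda}_{2\Theta,v}(R)$ into the displayed equation yields
$$\widehat{\lambda}_{2\Theta,v}(R)-4\widehat{\lambda}_{2\Theta,v}(R)=-\log\big|\delta_{1}(K_{R})\big|_{v},$$
i.e. $-3\,\widehat{\lambda}_{2\Theta,v}(R)=-\log|\delta_{1}(K_{R})|_{v}$, which is exactly the claimed identity $\widehat{\lambda}_{2\Theta,v}(R)=\tfrac13\log|\delta_{1}(K_{R})|_{v}$.

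The only genuine subtlety — and the step I would write out most carefully — is justifying that the particular normalization of $\widehat{\lambda}_{2\Theta,v}$ fixed in paragraph \ref{normalisation} is precisely the one for which the functional equation holds with no additive constant \emph{and} for which the height is even, simultaneously. The functional-equation normalization is built into the Flynn--Smart construction (the constant $C_{\infty,v}$ never enters here because $\mu_{2\Theta,v}$ is defined by the telescoping series $\sum 4^{-(n+1)}\log E_v(K_{[2^n]P})$, which automatically produces the cocycle relation with remainder exactly $-\log|\delta_1(K_P)|_v$). Evenness then comes for free because the whole construction is pulled back from the Kummer surface $K=A/(\pm1)$, on which $R$ and $-R$ have the same image; one should note that $R\notin\supp(\Theta)$ guarantees $K_R$ is given by the first branch of $\kappa$ with $k_1=1$ and the normalization is unambiguous. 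No transcendence input or zero-estimate is needed for this lemma; it is a clean consequence of the symmetry of $\Theta$ (guaranteed by the rational Weierstrass point $P_0$) and of Proposition \ref{3-torsion}. The main obstacle is therefore purely bookkeeping: being sure that "the" canonical local height in the statement refers to the specific representative singled out in \ref{normalisation}, so that both the functional equation and evenness hold on the nose rather than up to constants.
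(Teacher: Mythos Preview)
Your proof is correct and follows essentially the same route as the paper: apply the functional equation $\widehat{\lambda}_{2\Theta,v}([2]R)-4\widehat{\lambda}_{2\Theta,v}(R)=-\log|\delta_1(K_R)|_v$, use $[2]R=-R$ for a point of order~$3$, and invoke evenness of the local height (the paper phrases this as ``$\Theta$ est sym\'etrique et d\'efini gr\^ace \`a un point de Weierstrass donc la hauteur locale est paire''). Your additional remarks on why the Flynn--Smart normalization makes both the functional equation and the evenness hold on the nose (via the factorization through the Kummer surface) are more detailed than what the paper writes, but entirely in the same spirit.
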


\begin{proof}
On note $v(f(P))=-\log|f(P)|_{v}$.
Il suffit de partir de l'\'equation fonctionnelle fixant la hauteur locale :
$$\widehat{\lambda}_{2\Theta,v}([2]R)-4\widehat{\lambda}_{2\Theta,v}(R)=v(f(R)).$$
Comme $R$ est un point de $3$-torsion non nul, on a $[2]R=-R$. De plus le diviseur $\Theta$ est sym\'etrique et d\'efini grâce \`a un point de Weierstrass donc la hauteur locale est paire, ce qui implique :
$$-3\widehat{\lambda}_{2\Theta,v}(R)=v(f(R)),$$
d'o\`u le r\'esultat, en notant que dans la normalisation \ref{normalisation}, $f(P)=\delta_{1}(K_{P})$.
\end{proof}

\begin{lem}\label{36}
Soient $k$ un corps de nombres et $C/k$ une courbe de genre 2, dont on se donne un mod\`ele hyperelliptique entier $y^{2}=F(x)=a_{5}x^{5}+...+a_{0}$. Soit $A$ la jacobienne de $C$. On note $D=2^{8}\disc(F)$. Alors on a l'\'egalit\'e :
$$\prod_{R\in{A[3]\backslash\{O\}}}\delta_{1}(K_{R})=\frac{1}{3^{24}} D^{36}.$$
\end{lem}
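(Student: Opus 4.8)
The quantity $\prod_{R\in A[3]\setminus\{O\}}\delta_1(K_R)$ is, up to the issue of normalizing the projective coordinates $K_R$, a symmetric function of the $80$ nonzero $3$-torsion points, hence — being Galois-invariant and living over the base — a rational function of the coefficients $a_0,\dots,a_5$ of $F$. The plan is to identify this rational function explicitly and show it equals $3^{-24}D^{36}$. Since $\delta_1$ is (one of the components of) the duplication polynomials on the Kummer surface, and since for a $3$-torsion point $[2]R=-R$, the value $\delta_1(K_R)$ is, up to a power, the ``quasi-period multiplier'' attached to $R$: indeed from Lemma~\ref{3-torsion delta} we have $\widehat\lambda_{2\Theta,v}(R)=\tfrac13\log|\delta_1(K_R)|_v$, so $\prod_R\delta_1(K_R)$ controls $\sum_R\widehat\lambda_{2\Theta,v}(R)$, and the archimedean/finite comparison must ultimately reduce to a discriminant. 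So the target $D^{36}$ (with $D=2^8\disc(F)$) and the explicit constant $3^{-24}$ should emerge from a direct computation.

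The cleanest route I would take is via the theta-function description of Section~3. On the analytic side, choose an archimedean place and write $K_R$ in terms of the even theta constants: the four Kummer coordinates $k_1,k_2,k_3,k_4$ of a point are, up to a common factor, quadratic monomials in the $\theta_{a,b}(Z)$'s (the classical map $A/(\pm1)\hookrightarrow\mathbb P^3$ by second-order theta functions, compatible with the model of \cite{Fly,FlySma}), and the duplication polynomial $\delta_1$ evaluated at $K_R$ with $[2]R=-R$ becomes, by the theta functional equation used to define $\Lambda_{\Theta,v}$, a product of theta constants evaluated at the $3$-division values $Z=\tfrac13(\mathbb Z^2+\tau\mathbb Z^2)$. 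Taking the product over the $80$ nontrivial $3$-division points turns $\prod_R\delta_1(K_R)$ into a product of theta constants over all $3$-torsion translates, and such products are evaluated by the classical Jacobi-type identities: they are monomials in the theta \emph{nullwerte} $\theta_{a,b}(0)$, which in turn are — by Thomae's formulae for a genus-$2$ curve $y^2=F(x)$ — explicit monomials in the differences $(e_i-e_j)$ of the roots of $F$, i.e. in $\disc(F)$. Bookkeeping the exponents: there are $80$ points $R$, each $\delta_1$ has degree $4$ in the $k_i$ hence degree $8$ in the theta constants, and the Thomae substitution converts each even theta nullwert into a product of five root-differences up to a fourth root of $\disc(F)$; tracking the total power of $\disc(F)$ and the numerical prefactor (where the powers of $2$ in $D=2^8\disc(F)$ and the $3^{-24}$ come in) is the content of the identity. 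Because both sides are rational functions of $a_0,\dots,a_5$ that agree at every archimedean place and are homogeneous of the same weight, the identity holds as an identity of polynomials/rational functions, hence over any field of characteristic zero (and one checks directly it is integral away from where $F$ degenerates).

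Alternatively — and this is probably the more self-contained write-up — one bypasses transcendental formulas and argues algebraically: $\delta_1$ is an explicit polynomial (reproduced in the appendix of \cite{Paz}), the $80$ points of $A[3]\setminus\{O\}$ are cut out on the Kummer surface by an explicit (division-polynomial) ideal, and $\prod_R\delta_1(K_R)$ is the resultant of $\delta_1$ with that ideal. One computes this resultant — or rather verifies the claimed value — by exploiting homogeneity (both sides have the same weight in the $a_i$), $\mathrm{SL}_2$-covariance (both sides are covariants of the binary quintic $F$, hence up to a scalar powers of the discriminant, which is the only covariant of the right weight that is a power of an irreducible), and then fixing the scalar $3^{-24}$ and the power $2^{8\cdot36}$ by specializing to one convenient curve (e.g. $y^2=x^5-x$ or $y^2=x^5+1$) where $\disc(F)$, the $3$-torsion and the $\delta_1$-values can be computed by hand or in \texttt{MAGMA}. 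The main obstacle in either approach is the same: correctly tracking the numerical and $2$-power constants through the Kummer/theta normalization — the exponents $36$, $24$ and the $2^8$ in $D$ are delicate, and the parity/symmetry input (the diviseur $\Theta$ is symmetric because $P_0$ is a Weierstrass point, so $\widehat\lambda_{2\Theta,v}$ is even and $[2]R=-R$ forces the clean cube in Lemma~\ref{3-torsion delta}) is exactly what makes the product collapse to a pure power of the discriminant rather than to a messier covariant.
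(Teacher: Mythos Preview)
Your second (``algebraic'') alternative is essentially the paper's proof. The paper argues: Galois invariance over $L=\mathbb{Q}(a_0,\dots,a_5)$ shows $Q:=\prod_{R}\delta_1(K_R)$ lies in $\mathbb{Q}[a_0,\dots,a_5]$; the crucial step is that $\delta_1(K_R)=0$ forces $D=0$ (since for $D\neq 0$ the curve is smooth and every point of exact order~$3$ lies off $\Theta$, hence $[2]R=-R$ is off $\Theta$ and $\delta_1(K_R)\neq 0$), and as $D$ is irreducible this yields $Q=c_0 D^{d_0}$; a weight count ($\delta_1$ has weight $18$, $D$ has weight $40$, there are $80$ factors) gives $d_0=36$; finally a single specialization fixes $c_0=3^{-24}$. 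The paper specializes to $y^2=x^5-1$ and carries out the computation in PARI (solving the system $\delta_i-k_i\delta_1=0$, $\delta_0=0$ by iterated resultants), obtaining $Q=2^{288}3^{-24}5^{180}$ with $D=2^{8}\cdot 5^{5}$. One caveat on your write-up: invoking $\mathrm{SL}_2$-covariance alone does \emph{not} force $Q$ to be a pure power of the discriminant, since the ring of invariants of a binary quintic has several generators; it is the zero-locus argument (which you also mention) that does the work, and that is precisely how the paper proceeds.

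Your primary route via theta functions and Thomae is genuinely different from what the paper does for this lemma. It is plausible --- and the paper does use Grant's theta product formula for the companion archimedean identity in the very next lemma --- but for the $\delta_1$-product the paper stays purely algebraic, avoiding the normalization bookkeeping you rightly flag as the main obstacle. The algebraic argument is considerably shorter here; the transcendental one would buy a more conceptual explanation of the exponents but at the cost of tracking the projective normalization $k_1=1$ through $80$ factors.
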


\begin{proof}
On sait que si $D\neq 0$, la courbe $C$ est lisse et les points d'ordre exactement 3 de $\Jac(C)$ ne sont pas sur le support du diviseur $\Theta$. Ceci implique que $-R=[2]R$ n'est pas sur le support de $\Theta$, donc $\delta_{1}(R)\neq 0$ pour tout point d'ordre 3.
En contraposant on obtient l'implication : $\Big(\delta_{1}(R)=0\Big)\Rightarrow \Big( D=0\Big)$.
On sait de plus que $\delta_{1}(K_{R})\in{\mathbb{Z}[k_{1},...,k_{4}][a_{0},...,a_{5}]}$ et le degr\'e total en les $a_{i}$ est 3. Le degr\'e total en les $k_{i}$ est 4. Soit $L=\mathbb{Q}(a_{0},...,a_{5})$. Les $k_{i}$ \'etant les coordonn\'ees des points d'ordre $3$ ce sont des \'el\'ements alg\'ebriques sur $L$. On sait de plus que $[L[A[3]]:L]\leq 3^{16}$. Posons : 
$$Q(a_{0},...,a_{5})=\prod_{R\in{A[3]\backslash\{O\}}}\delta_{1}(K_{R}). $$ 
L'ensemble $A[3]\backslash\{O\}$ est stable sous l'action du groupe de Galois $\Gal(\bar{L}/L)$. 
On peut en d\'eduire que $Q\in{\mathbb{Q}(a_{0},...,a_{5})}$. Or $Q$ est une fraction rationnelle sans pôle : c'est donc un polynôme. On en d\'eduit que $Q(a_{0},...,a_{5})\in{\mathbb{Q}[a_{0},...,a_{5}]}$. 

D'autre part on a $D\in{\mathbb{Z}[a_{0},...,a_{5}]}$ et $D$ est irr\'eductible. Ceci permet de dire qu'il existe des constantes universelles $c_{0}\in{\mathbb{Q}}$ et $d_{0}\in{\mathbb{N}}$ telles que :
$$Q=c_{0}D^{d_{0}}.$$
(Notons que seules les puissances de $D$ ne s'annulent sur aucun point d'ordre 3, voir l'article \cite{Grant2}. On peut aussi raisonner ainsi : il suffit de montrer l'\'egalit\'e sur $\mathbb{C}$, ce qui est faisable en \'etudiant le poids, les z\'eros et les p\^oles de la forme $Q/D^{36}$.)

Dans un deuxi\`eme temps on cherche \`a expliciter les constantes $c_{0}$ et $d_{0}$. On sait que $D$ est de poids 40 et $\delta_{1}$ de poids 18. Comme il y a 80 termes $\delta_{1}(R)$, cela montre que $d_{0}=36$. Il suffit ensuite de mener le calcul complet dans un cas particulier. Nous allons choisir l'\'equation $y^{2}=x^{5}-1$. On pose $F(x)=x^{5}-1$. On a donc $\disc(F)=3125=5^{5}$.

On va calculer les coordonn\'ees $K_{R}$ des points $R$ d'ordre 3 exactement pour cet exemple particulier. On note $K_{R}=(1,k_{2},k_{3},k_{4})$ la coordonn\'ee normalis\'ee. Ces coordonn\'ees v\'erifient tout d'abord l'\'equation de la surface de Kummer (donn\'ee dans \cite{CasFly} page 19 ou dans l'annexe de \cite{Paz}), qu'on notera $\delta_{0}=0$. Ces points v\'erifient de plus l'\'equation $K_{[2]R}=K_{R}.$
On doit donc r\'esoudre le syst\`eme suivant :

\begin{center}
 $\left\{
 \begin{tabular}{lll}

 $\delta_{2}-k_{2}\delta_{1}$ & $=$ & $0,$\\

 $\delta_{3}-k_{3}\delta_{1}$ & $=$ & $0,$\\

 $\delta_{4}-k_{4}\delta_{1}$ & $=$ & $0,$\\

 $\delta_{0}$ & $=$ & $0.$\\
 \end{tabular}
 \right.$
\end{center}

On utilise alors les formules de duplication sur la surface de Kummer donn\'ees en annexe de \cite{Paz}, dans lequelles on sp\'ecialise ainsi : $a_{6}=0$, $a_{5}=1$, $a_{4}=a_{3}=a_{2}=a_{1}=0$, $a_{0}=-1$ et $k_{1}=1$. A partir de l\`a, on s'est ramen\'e au probl\`eme de la recherche de racines communes \`a quatre polynômes fix\'es d\'ependant de trois variables $k_{2}$, $k_{3}$ et $k_{4}$.

On peut r\'esoudre ce syst\`eme en utilisant une technique de r\'esultants : on prend le r\'esultant des deux premiers polynômes par rapport \`a la premi\`ere variable, puis le r\'esultant du r\'esultat avec le troisi\`eme polynôme par rapport \`a la deuxi\`eme variable et un dernier r\'esultant en fonction de la derni\`ere variable. On fait cela dans tous les ordres possibles. Ceci donne des valeurs possibles pour la derni\`ere variable, on remonte ensuite les calculs et on v\'erifie \textit{a posteriori} que les coordonn\'ees candidates sont bien des solutions des quatre \'equations de d\'epart. Une fois les coordonn\'ees trouv\'ees, le calcul de $\delta_{1}$ est direct.

Les calculs ont \'et\'e men\'es compl\`etement en utilisant le logiciel PARI. Le r\'esultat est le suivant :
$$Q(1,0,0,0,0,-1)=2^{288}3^{-24}5^{180}.$$
Ceci fournit, puisqu'on a $D=2^{8}\disc(F)=2^{8}5^{5}$, les valeurs $c_{0}=3^{-24}$ et $d_{0}=36$.
\end{proof}

\begin{defin}
On note $\mathcal{Z}_{2}$ l'ensemble des 10 caract\'eristiques paires en dimension 2, et $\theta_{m}(0,\tau)$ la constante th\^eta associ\'ee \`a la caract\'eristique $m$ et la matrice de p\'eriodes $\tau$. On d\'efinit alors le discriminant modulaire comme \'etant :
$$\Delta(\tau)=2^{-12}\prod_{m\in{\mathcal{Z}_{2}}}\theta_{m}(0,\tau)^2.$$
\end{defin}

\begin{lem}\label{formule grant}
Soit $A/k$ une surface ab\'elienne simple sur un corps de nombres $k$. Soit $v$ une place archim\'edienne et soit $A(\bar{k}_{v})\simeq \mathbb{C}^2/\mathbb{Z}^2+\tau_{v}\mathbb{Z}^2$ une uniformisation complexe. Pour tout point $R$ d'ordre 3, on note $Z_{R}=X_{R}+\tau_{v}Y_{R}$ sa coordonn\'ee complexe, avec $X_{R}, Y_{R}\in{\mathbb{R}^2}$. Posons $[a,b]=[1/2,1/2,0,1/2]$. On a alors l'\'egalit\'e suivante : $$\prod_{R\in{T_{3}}}\Big|\theta_{a,b}(R,\tau_{v})\Big|_{v} e^{-\pi ^tY_{R}\Ima\tau_{v} Y_{R}}=3^4 |\Delta(\tau_{v})|_{v}^4.$$ 
\end{lem}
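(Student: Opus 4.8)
The plan is to compute the product $\prod_{R\in\mathcal{T}_3}|\theta_{a,b}(Z_R,\tau_v)|_v\,e^{-\pi\,{}^tY_R\Ima\tau_v Y_R}$ by relating the theta value at a $3$-torsion point with given characteristic $[a,b]$ to a theta \emph{constant} with a shifted characteristic. Recall that $Z_R=X_R+\tau_v Y_R$ with $X_R,Y_R\in\frac13\mathbb{Z}^2$. The classical transformation law for theta functions with characteristics gives, up to an explicit root of unity and an explicit exponential factor, $\theta_{a,b}(Z_R,\tau_v)=(\text{unit})\cdot e^{(\text{quadratic in }Y_R)}\,\theta_{a+Y_R,\,b-X_R}(0,\tau_v)$. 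The point of the correction factor $e^{-\pi\,{}^tY_R\Ima\tau_v Y_R}$ built into $\Lambda_{\Theta,v}$ is exactly that it makes the resulting expression $|\theta_{a+Y_R,b-X_R}(0,\tau_v)|_v$ times a factor that only involves $|\cdot|_v$ of a root of unity, i.e. equals $1$. So the first step is: show that for each $R$ of order $3$, $|\theta_{a,b}(Z_R,\tau_v)|_v\,e^{-\pi\,{}^tY_R\Ima\tau_v Y_R}=|\theta_{c_R,d_R}(0,\tau_v)|_v$ for an explicitly determined characteristic $(c_R,d_R)\in\frac16\mathbb{Z}^2/\mathbb{Z}^2$ depending on $(X_R,Y_R)$ modulo $\mathbb{Z}^2$.

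Next I would understand the map $R\mapsto (c_R,d_R)$ as $R$ ranges over $\mathcal{T}_3$, the $80$ points of exact order $3$. Since $[a,b]=[1/2,1/2,0,1/2]$ is a half-integer characteristic and $Y_R,X_R\in\frac13\mathbb{Z}^2$, the shifted characteristic lives in $\frac12\mathbb{Z}^2+\frac13\mathbb{Z}^2=\frac16\mathbb{Z}^2$. The key structural fact to exploit is that $\theta$ constants with genuinely sixth-order (non-half-integer) characteristics will appear in pairs/groups whose product is controlled, while the truly half-integer even characteristics give the ten $\theta_m(0,\tau_v)$ entering $\Delta(\tau_v)$. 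I expect that, after collecting the $80$ factors, the orbit structure under translation by $A[3]$ forces the product over the $72$ points $R$ with $Y_R\neq 0$ (the ones whose shifted characteristic is not half-integer) to be a power of a product of \emph{all} sixth-order theta constants, which by a standard product formula (Igusa-type identities for $g=2$) is expressible through $\Delta(\tau_v)$; meanwhile the $8$ points $R$ with $Y_R\equiv 0$ contribute half-integer even theta constants directly. Combining these and tracking the exponents should produce $3^4|\Delta(\tau_v)|_v^4$. Alternatively — and this is probably the cleanest route — I would bypass the case analysis entirely by invoking the algebraic identity of Lemme \ref{36}: the Kummer coordinate $\delta_1(K_R)$ is, via Thomae-type formulas, a product of theta constants, and Lemme \ref{3-torsion delta} together with \eqref{Cinf} already ties $\widehat\lambda_{2\Theta,v}(R)=\frac13\log|\delta_1(K_R)|_v$ to $2\Lambda_{\Theta,v}(R)+C_{\infty,v}$; taking the product over $R\in\mathcal{T}_3$ and using that $\prod_{R}\delta_1(K_R)=3^{-24}D^{36}$ from Lemme \ref{36}, one gets $\sum_R\widehat\lambda_{2\Theta,v}(R)$ in closed form, hence $\sum_R\Lambda_{\Theta,v}(R)$ in closed form modulo $80\,C_{\infty,v}$. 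That reduces the lemma to evaluating $C_{\infty,v}$ in terms of $\Delta(\tau_v)$, i.e.\ to the well-known comparison between the Flynn–Smart normalization of the height and the theta normalization.

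The main obstacle is pinning down the constant: one must check that the additive constant $C_{\infty,v}$ relating $\widehat\lambda_{2\Theta,v}$ to $2\Lambda_{\Theta,v}$ is exactly $-\frac23\log|\text{(explicit power of }2)\cdot\Delta(\tau_v)|_v$, and that the global discriminant factor $D^{36}$ matches the archimedean $|\Delta(\tau_v)|_v$ up to the stated power — this is essentially the statement that $|\disc(F)|_v$ and $2^8$ are absorbed correctly, which is a bookkeeping of weights (weight $40$ for $D$, weight $18$ for $\delta_1$, and the modular weight of $\Delta(\tau)$) exactly of the kind already carried out in the proof of Lemme \ref{36}. I would therefore do the weight/normalization check carefully in one test curve (again $y^2=x^5-1$, where $\disc F=5^5$ and the $K_R$ are explicitly known from Lemme \ref{36}), which simultaneously fixes $c_0$-type constants and the power $4$ on $\Delta(\tau_v)$, and then argue that both sides are, as functions on $F_2$, sections of the same line bundle with the same divisor of zeros, so their ratio is a constant determined by the test case. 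The exponent $3^4$ then comes out as the finite part $3^{-24}$ of Lemme \ref{36} redistributed over the $80$ factors after extracting cube roots (each $\widehat\lambda_{2\Theta,v}(R)=\frac13\log|\delta_1(K_R)|_v$ introduces the exponent $1/3$, and $3^{-24/3}=3^{-8}$ against the modular side's normalization $2^{-12}$ in $\Delta$ yields $3^4$ after squaring). The verification that no extra archimedean unit survives is the delicate point and is where I would spend the most care.
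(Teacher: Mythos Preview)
The paper's proof is a one-line citation: it invokes Th\'eor\`eme~2 of Grant \cite{Grant1} together with the standard transformation law for theta functions with characteristics. Your first approach---reduce each factor $|\theta_{a,b}(Z_R,\tau_v)|\,e^{-\pi\,{}^tY_R\Ima\tau_v Y_R}$ to a theta constant $|\theta_{c_R,d_R}(0,\tau_v)|$ with a sixth-order characteristic, and then evaluate the product of those constants over all $R\in\mathcal{T}_3$---is exactly the content of Grant's product formula, so in that branch you are reconstructing the cited result rather than proposing something new.

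Your second ``cleanest route'' has a real gap. You propose to obtain $\sum_R\Lambda_{\Theta,v}(R)$ from $\sum_R\widehat\lambda_{2\Theta,v}(R)$ via Lemmes~\ref{3-torsion delta} and~\ref{36}, modulo $80\,C_{\infty,v}$, and then to determine $C_{\infty,v}$ independently. But the paper never computes $C_{\infty,v}$: its whole strategy is to \emph{bypass} that constant by summing over the $3$-torsion, which is precisely why Lemme~\ref{36} (for the finite places) and the present lemma (for the archimedean places) are needed as independent inputs to the \'Egalit\'e Clef. To evaluate $C_{\infty,v}$ you would need either the identity you are trying to prove, or a direct Thomae-type comparison between the Flynn--Smart Kummer coordinates and theta functions---an ingredient you call ``well-known'' but do not supply.

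Your proposed determination of the constant is also flawed on two counts. First, a single test curve cannot pin down an exact identity between transcendental functions of $\tau_v$: for $y^2=x^5-1$ the period matrix is a specific transcendental point, and one numerical evaluation does not establish equality of functions. (Once one knows both sides are modular of the same weight with the same divisor, the correct way to fix the constant is to compare leading Fourier coefficients at a cusp, not to evaluate at one interior point.) Second, the arithmetic you sketch---``$3^{-24/3}=3^{-8}$ against $2^{-12}$ in $\Delta$ yields $3^4$ after squaring''---is incoherent: powers of $2$ and $3$ do not interact, and no squaring sends $3^{-8}$ to $3^4$. The exponent $3^4$ in fact comes out of Grant's formula directly, not from recombining the constants of Lemme~\ref{36}.
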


\begin{proof}
Il suffit d'utiliser le th\'eor\`eme 2 page 234 de l'article \cite{Grant1} et les transformations classiques des fonctions th\^eta.
\end{proof}

{\begin{clef} \label{clef} Soit $P\in{A(k)}$ et soit $n\geq 1$ tel que $[n]P\in{A_{\Theta}(k)}$ :
$$n^2\widehat{h}_{A,2\Theta}(P)=\!\!\sum_{v\in{M_{k}^{0}}}\!\!n_{v}\widehat{\lambda}_{2\Theta,v}([n]P)+\!\!\sum_{v\in{M_{k}^{\infty}}}\!\!n_{v}2\Lambda_{\Theta,v}([n]P)+\frac{3}{20d}\log\Nk(D)+\frac{1}{10}\!\!\sum_{v\in{M_{k}^{\infty}}}\!\!n_{v}\log|\Delta(\tau_{v})|_{v}.$$
\end{clef}}

\begin{proof}
On commence par \'ecrire $$n^2\widehat{h}_{A,2\Theta}(P)=\frac{1}{80}\sum_{R\in{\mathcal{T}_{3}}}\Big(\widehat{h}_{A,2\Theta}([n]P)-\widehat{h}_{A,2\Theta}(R)\Big)=\frac{1}{80}\sum_{R\in{\mathcal{T}_{3}}}\sum_{v\in{M_{k}}}n_{v}\Big(\widehat{\lambda}_{2\Theta,v}([n]P)-\widehat{\lambda}_{2\Theta,v}(R)\Big).$$
On utilise alors l'\'equation (\ref{differences archim}) aux places archim\'ediennes pour obtenir :

\flushleft
\begin{tabular}{ll}
$n^2\widehat{h}_{A,2\Theta}(P)=$ & $\displaystyle{\sum_{v\in{M_{k}}}n_{v}\widehat{\lambda}_{2\Theta,v}([n]P)-\frac{1}{80}\sum_{R\in{\mathcal{T}_{3}}}\sum_{v\in{M_{k}^{0}}}n_{v}\widehat{\lambda}_{2\Theta,v}(R)+\sum_{v\in{M_{k}}}2n_{v}\Lambda_{\Theta,v}([n]P)}$\\
& $\displaystyle{-\frac{1}{80}\sum_{R\in{\mathcal{T}_{3}}}\sum_{v\in{M_{k}^{\infty}}}2n_{v}\Lambda_{\Theta,v}(R)}$\\
\end{tabular}

\`A ce stade, il suffit d'appliquer les lemmes \ref{3-torsion delta} et \ref{36} pour la somme sur les points d'ordre 3 aux places finies et le lemme \ref{formule grant} pour la somme sur les points d'ordre 3 aux places archim\'ediennes. On remarquera que les puissances de 3 disparaissent dans les constantes.
\end{proof}

Nous allons diviser le travail de minoration de la hauteur globale en quatre tâches :

\begin{enumerate}

\item Pour $v$ une place finie : minorer $\widehat{\lambda}_{2\Theta,v}([n]P)$.

\item Pour $v$ une place archim\'edienne : minorer $\Lambda_{\Theta,v}([n]P)$.

\item Pour $v$ une place archim\'edienne : minorer $|\Delta(\tau_{v})|$.

\item Redescendre sur le corps de base.

\end{enumerate}

On traite du premier point dans le prochain paragraphe. Les \'etudes 2 et 3 aux places archim\'ediennes feront l'objet des parties suivantes. Le quatri\`eme point sera trait\'e \`a la fin de la preuve du th\'eor\`eme \ref{minoration dimension 2}.

\subsection{Estimation aux places finies}

\begin{prop}\label{minoration finie}

La hauteur locale en une place finie, normalis\'ee comme dans \ref{normalisation}, peut \^etre minor\'ee de la façon suivante (pour $P$ hors du support du diviseur $\Theta$) :
$$\widehat{\lambda}_{2\Theta,v}(P)\geq -\frac{1}{3}\Big(4 \ordv(2)+\ordv(\disc(F))\Big)\log \Nk(v).$$
\end{prop}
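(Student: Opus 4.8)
The plan is to exploit the functional equation for the canonical local height together with the explicit structure of the duplication polynomials on the Kummer surface. First I would recall that $\widehat{\lambda}_{2\Theta,v}(P) = \lambda_{2\Theta,v}(K_P) + \mu_{2\Theta,v}(K_P)$, where $\lambda_{2\Theta,v}(K_P) = \log\max_i|k_i(P)|_v \geq 0$ once the projective coordinates are normalised so that the first nonzero coordinate equals $1$ (a $p$-adic absolute value of an integer is at most $1$; but more precisely, after normalisation the max of the $|k_i|_v$ is at least $1$). So the naive part is nonnegative and the whole problem is to bound the correction term $\mu_{2\Theta,v}(K_P) = \sum_{n\geq 0} 4^{-(n+1)}\log E_v(K_{[2^n]P})$ from below. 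Since $E_v(K_{P}) = \max_i|\delta_i(K_P)|_v / \max_i|k_i(P)|_v^4$ and the $\delta_i$ are integral polynomials of degree $4$ in the $k_i$ with coefficients that are polynomials in the $a_j$, the numerator $\max_i|\delta_i(K_P)|_v$ is, up to a $v$-adic unit controlled by the content of the $\delta_i$'s, comparable to $\max_i|k_i(P)|_v^4$; the obstruction to equality is exactly the common $v$-adic divisor that the four polynomials $\delta_i$ acquire on the locus where the formulas degenerate, which by the analysis of Flynn--Smart and Stoll is supported on primes dividing $2\disc(F)$.

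The key step is therefore a lower bound of the shape $\ordv(E_v(K_P)) \geq -c_v$ uniformly in $P$, with $c_v = 4\,\ordv(2) + \ordv(\disc(F))$ coming from the content of the duplication polynomials (the factor $4\,\ordv(2)$ is the standard ``denominator'' contributed by the $2$-adic normalisation of the Kummer equation and the doubling formulas, and $\ordv(\disc(F))$ measures the bad reduction of the hyperelliptic model). This is precisely the content of Stoll's explicit bounds in \cite{Stoll1}: he shows that $\delta_1(K_P)$ and the other $\delta_i$ can only pick up denominators at such primes and bounds their exponents. Granting $\ordv(E_v(K_{[2^n]P}))\geq -c_v$ for every $n$, I would then sum the geometric series:
$$
\mu_{2\Theta,v}(K_P) \;=\; \sum_{n=0}^{\infty}\frac{1}{4^{n+1}}\log E_v(K_{[2^n]P}) \;\geq\; -c_v\log\Nk(v)\sum_{n=0}^{\infty}\frac{1}{4^{n+1}} \;=\; -\frac{c_v}{3}\log\Nk(v),
$$
where $\log\Nk(v)$ converts the normalisation of $|\cdot|_v$ to the absolute one. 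Combining with $\lambda_{2\Theta,v}(K_P)\geq 0$ gives exactly
$$
\widehat{\lambda}_{2\Theta,v}(P) \;\geq\; -\frac{1}{3}\bigl(4\,\ordv(2)+\ordv(\disc(F))\bigr)\log\Nk(v),
$$
as claimed.

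The main obstacle is establishing the uniform bound $\ordv(E_v(K_P)) \geq -(4\ordv(2)+\ordv(\disc(F)))$ with the correct constant, rather than merely some constant depending on $v$. Getting the coefficient of $\ordv(2)$ to be exactly $4$ and the coefficient of $\ordv(\disc(F))$ to be exactly $1$ requires a careful bookkeeping of the $v$-adic valuations of the coefficients appearing in the $\delta_i$ and in the quartic defining $K$ — one must check that no extra powers of $2$ or of the discriminant slip in, and that the bound is stable under the iteration $P\mapsto [2]P$ (so that the same $c_v$ works at every level $n$, not a growing one). This is where I would lean directly on the explicit tables and estimates of Flynn--Smart \cite{FlySma} and Stoll \cite{Stoll1, Stoll2}, which were set up precisely to make such denominators explicit; the remaining work is to translate their normalisations into the present one and verify the arithmetic of the exponents. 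Once that valuation estimate is in hand, the rest is the routine geometric-series summation above.
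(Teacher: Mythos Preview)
Your proposal is correct and follows essentially the same route as the paper: the paper's proof simply cites Stoll's Theorem~6.1 in \cite{Stoll1} for the uniform bound $E_{v}(K)\geq |2^{4}\disc(F)|_{v}$ (obtained via representations of the $2$-torsion subgroup rather than a direct content analysis of the $\delta_i$), and then performs exactly the geometric-series summation you wrote out, together with $\lambda_{2\Theta,v}(K_P)\geq 0$. Your identification of the key input and the arithmetic $\sum_{n\geq 0}4^{-(n+1)}=1/3$ matches the paper's ``calcul direct''.
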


\begin{proof}
En \'etudiant des repr\'esentations du sous-groupe de
$2$-torsion de la jacobienne, M.Stoll (\cite{Stoll1}, th\'eor\`eme 6.1) a obtenu la minoration :
$$E_{v}(K)\geq \Big|2^{4}\disc(F)\Big|_{v}.$$
Ceci donne par un calcul direct la minoration annonc\'ee. Remarquons qu'il est possible d'utiliser ce r\'esultat de Stoll (\'ecrit pour des sextiques) en prenant l'un des coefficients $\beta_{j}$ \'egal \`a $0$ dans son paragraphe 3. Cela induit les m\^emes changements que pour les travaux de V. Flynn puisqu'il utilise le m\^eme plongement et les m\^emes matrices agissant sur $\mathbb{P}^{3}$.
\end{proof}

\subsection{Estimation aux places archim\'ediennes}

\subsubsection{Minoration de $\Lambda_{\Theta,v}(P)$}

On veut dans cette sous-partie minorer la hauteur locale archim\'edienne d\'efinie pour $P\in{A_{\Theta}}(\mathbb{C})$ par :
$$\Lambda_{\Theta,v}(P)=-\log\left(\Big|\theta_{a,b}(Z(P))\Big|_{v}\;e^{-\pi \,^{t}\Ima Z(\Ima\tau)^{-1}\Ima Z}\right),$$
o\`u on a fix\'e $[a,b]=[1/2,1/2,0,1/2]$. Pour tout vecteur $x=[x_{1},x_{2}]\in{\mathbb{R}^{2}}$, on d\'efinit la quantit\'e $\delta(x):=\min\{d(x_{1},\mathbb{Z}),d(x_{2},\mathbb{Z})\}$. 
Nous montrons tout d'abord une batterie de lemmes analytiques utiles pour l'estimation des fonctions th\^eta :

\begin{lem} \label{lem1}
Pour toute matrice $\tau\in{F_{2}}$ on a la minoration pour tout vecteur r\'eel $R=[R_{1},R_{2}]\in{\mathbb{R}^{2}}$:
$$\,^{t}R \Ima\tau R \geq (\Ima \tau_{1}-\Ima\tau_{12})\, R_{1}^{2}+(\Ima \tau_{2}-\Ima\tau_{12})\, R_{2}^{2}.$$
\end{lem}

\begin{proof}
Il suffit de d\'evelopper la forme quadratique et d'\'ecrire :

\begin{center}
\begin{tabular}{lll}

$\,^{t}R \Ima \tau R$ & $=$ & $R_{1}^{2}\Ima \tau_{1}+R_{2}^{2}\Ima \tau_{2}+2R_{1}R_{2}\Ima \tau_{12}$\\
\\

 & $\geq$ & $R_{1}^{2}\Ima \tau_{1}+R_{2}^{2}\Ima \tau_{2}-(R_{1}^{2}+R_{2}^{2})\Ima \tau_{12}$\\
\\

 & $\geq$ & $R_{1}^{2}(\Ima \tau_{1}-\Ima\tau_{12})+R_{2}^{2}(\Ima \tau_{2}-\Ima\tau_{12}).$\\
\\

\end{tabular}
\end{center}

On gardera en m\'emoire que $\Ima \tau_{2}\geq \Ima \tau_{1}\geq 2\Ima \tau_{12}\geq 0$ et $\Ima\tau_{1}>0$. Ces in\'egalit\'es impliquent que le minorant est une fonction d\'efinie positive.
\end{proof}

\begin{lem} \label{polaire}
Soit $r>0$ un r\'eel. Alors on a l'in\'egalit\'e :
$$\int_{r}^{+\infty}e^{-x^{2}}dx \leq \frac{\sqrt{\pi}}{2}e^{-r^{2}}.$$
\end{lem}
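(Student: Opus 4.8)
The final statement in the excerpt is Lemme \ref{polaire}: for $r>0$, $\int_r^{+\infty} e^{-x^2}\,dx \le \frac{\sqrt{\pi}}{2}e^{-r^2}$.

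The plan is to reduce to a two-dimensional computation and invoke polar coordinates, in the spirit of the classical evaluation of the Gaussian integral (which also explains the name of the lemma). Write $I(r) = \int_{r}^{+\infty} e^{-x^{2}}\,dx$; since the integrand is positive, $I(r) > 0$, so it suffices to bound $I(r)^{2}$ from above by $\tfrac{\pi}{4}e^{-2r^{2}}$ and then take square roots.

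First I would write
$$I(r)^{2} = \iint_{Q_{r}} e^{-(x^{2}+y^{2})}\,dx\,dy, \qquad Q_{r} = [r,+\infty)\times[r,+\infty),$$
a quarter-plane with corner at $(r,r)$. The key geometric observation is that every point of $Q_{r}$ lies at Euclidean distance at least $r\sqrt{2}$ from the origin — indeed $x\geq r$ and $y\geq r$ force $x^{2}+y^{2}\geq 2r^{2}$, with equality only at $(r,r)$ — so $Q_{r}$ is contained in the angular sector $S_{r} = \{(\rho\cos\phi,\rho\sin\phi) : \rho \geq r\sqrt{2},\ 0\leq\phi\leq\pi/2\}$.

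Then, using positivity of the integrand and passing to polar coordinates on the larger region $S_{r}$,
$$I(r)^{2} \leq \iint_{S_{r}} e^{-(x^{2}+y^{2})}\,dx\,dy = \int_{0}^{\pi/2}\!\!\int_{r\sqrt{2}}^{+\infty} e^{-\rho^{2}}\rho\,d\rho\,d\phi = \frac{\pi}{2}\cdot\frac{1}{2}e^{-2r^{2}} = \frac{\pi}{4}e^{-2r^{2}}.$$
Taking square roots yields $I(r) \leq \tfrac{\sqrt{\pi}}{2}e^{-r^{2}}$, which is the claim (and one recovers the limiting case $r=0$, namely $I(0)=\tfrac{\sqrt{\pi}}{2}$, as an equality).

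There is no serious obstacle here; the only point deserving a word is the inclusion $Q_{r}\subseteq S_{r}$, i.e. that $\min_{(x,y)\in Q_{r}}(x^{2}+y^{2}) = 2r^{2}$, handled above. As a self-contained alternative one could instead set $g(r) = \tfrac{\sqrt{\pi}}{2}e^{-r^{2}} - I(r)$, observe $g(0)=0$ and $g(r)\to 0$ as $r\to+\infty$, and examine the sign of $g'(r) = e^{-r^{2}}\bigl(1 - \sqrt{\pi}\,r\bigr)$: since $g$ increases on $(0,1/\sqrt{\pi})$ and decreases on $(1/\sqrt{\pi},+\infty)$ back to the limit $0$, it stays nonnegative. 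I would nonetheless keep the polar-coordinate argument as the main proof, as it is shorter and conceptually cleaner.
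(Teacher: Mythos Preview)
Your argument is correct and is essentially identical to the paper's own proof: the paper also squares the integral, bounds the quarter-plane $\{x\geq r,\ y\geq r\}$ by the sector $\{\rho\geq r\sqrt{2},\ 0\leq\theta\leq\pi/2\}$, and evaluates in polar coordinates to get $\frac{\pi}{4}e^{-2r^{2}}$. Your alternative via the sign of $g'(r)$ is a nice sanity check but is not needed.
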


\begin{proof}
On se ram\`ene au calcul en coordonn\'ees polaires :
$$\left(\int_{r}^{+\infty}e^{-x^{2}}dx\right)^{2}=\int\!\!\!\int_{\substack{x\geq r\\y\geq r}}e^{-x^{2}-y^{2}}dxdy\leq \int\!\!\!\int_{\substack{\rho\geq r\sqrt{2}\\ \theta\in{[0,\frac{\pi}{2}]} }}e^{-\rho^{2}}\,\rho d\rho d\theta =\frac{\pi}{4}\,e^{-2r^{2}}.$$
\end{proof}

\begin{lem} \label{lem2}
Soient $\alpha > 0$ et $\beta \in{\mathbb{R}}$. Alors si $\beta \notin{\mathbb{Z}}$:
$$\sum_{n_{1}\in{\mathbb{Z}}}e^{-\alpha (n_{1}+\beta)^{2}}\leq \left(2+\frac{\sqrt{\pi}}{\sqrt{\alpha}}\right)e^{-\alpha d(\beta,\mathbb{Z})^{2}},$$ et si $\beta \in{\mathbb{Z}}$ :
$$\sum_{n_{1}\in{\mathbb{Z}}}e^{-\alpha (n_{1}+\beta)^{2}}\leq 1+\frac{\sqrt{\pi}}{\sqrt{\alpha}}.$$
De plus si $\beta=\frac{1}{2}$ on a :
$$\sum_{\substack{n_{1}\in{\mathbb{Z}}\\n_{1}\neq -3,-2,-1,0,1,2}}e^{-\alpha (n_{1}+\beta)^{2}}\leq \left(\frac{\sqrt{\pi}}{\sqrt{\alpha}}\right)e^{-\alpha\frac{25}{4}}.$$
Enfin si $\beta=0$ on a :
$$\sum_{\substack{n_{1}\in{\mathbb{Z}}\\n_{1}\neq -1,0,1}}e^{-\alpha n_{1}^{2}}\leq \left(\frac{\sqrt{\pi}}{\sqrt{\alpha}}\right)e^{-\alpha}.$$
\end{lem}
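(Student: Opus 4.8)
The plan is to reduce every one of the four estimates to the Gaussian tail bound of Lemma~\ref{polaire} via the elementary comparison of a monotone sum with an integral. First I would normalize: the sum $\sum_{n_1\in\mathbb{Z}} e^{-\alpha(n_1+\beta)^2}$ is unchanged under $\beta\mapsto\beta+1$ (reindexing $n_1$) and under $\beta\mapsto-\beta$ (reindexing $n_1\mapsto -n_1$), so one may assume $\beta\in[0,\tfrac12]$, in which case $d(\beta,\mathbb{Z})=\beta$. The one computational device I use throughout is this: since $t\mapsto e^{-\alpha(t+\beta)^2}$ is non-increasing on $[-\beta,+\infty)$, one has $e^{-\alpha(n+\beta)^2}\leq\int_{n-1}^{n}e^{-\alpha(t+\beta)^2}\,dt$ whenever $n-1\geq-\beta$; summing over a tail of indices and substituting $u=\sqrt{\alpha}(t+\beta)$ turns the remaining series into $\tfrac1{\sqrt\alpha}\int_{r}^{\infty}e^{-u^2}\,du$, which by Lemma~\ref{polaire} is at most $\tfrac{\sqrt\pi}{2\sqrt\alpha}e^{-r^2}$. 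Crucially I always integrate over the interval one unit to the \emph{left} of the summation index, so that the decreasing integrand stays above the term being bounded.

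For $\beta\in(0,\tfrac12]$ I would isolate the two terms $n_1=0$ and $n_1=-1$, equal to $e^{-\alpha\beta^2}$ and $e^{-\alpha(1-\beta)^2}$ respectively, and bound the tails $\sum_{n_1\geq 1}$ and $\sum_{n_1\leq -2}$ by $\int_{0}^{\infty}e^{-\alpha(t+\beta)^2}\,dt=\int_{\beta}^{\infty}e^{-\alpha u^2}\,du$ and $\int_{1}^{\infty}e^{-\alpha(t-\beta)^2}\,dt=\int_{1-\beta}^{\infty}e^{-\alpha u^2}\,du$, each at most $\tfrac{\sqrt\pi}{2\sqrt\alpha}$ times $e^{-\alpha\beta^2}$, respectively $e^{-\alpha(1-\beta)^2}$. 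Since $\beta\leq\tfrac12$ forces $1-\beta\geq\beta$, hence $e^{-\alpha(1-\beta)^2}\leq e^{-\alpha\beta^2}$ — this is the only place the hypothesis $\beta\leq\tfrac12$ enters — summing the four pieces gives $(2+\tfrac{\sqrt\pi}{\sqrt\alpha})e^{-\alpha\beta^2}$. The case $\beta\in\mathbb{Z}$ reduces to $\beta=0$, where the sum is $1+2\sum_{n_1\geq1}e^{-\alpha n_1^2}\leq 1+2\int_{0}^{\infty}e^{-\alpha t^2}\,dt=1+\tfrac{\sqrt\pi}{\sqrt\alpha}$.

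The last two estimates are the same mechanism with extra room. When $\beta=\tfrac12$, the reindexing $n_1\mapsto -1-n_1$ fixes the excluded set $\{-3,-2,-1,0,1,2\}$ and sends $\{n_1\geq 3\}$ to $\{n_1\leq -4\}$, so the sum equals $2\sum_{n_1\geq3}e^{-\alpha(n_1+\frac12)^2}\leq 2\int_{2}^{\infty}e^{-\alpha(t+\frac12)^2}\,dt=2\int_{5/2}^{\infty}e^{-\alpha u^2}\,du\leq\tfrac{\sqrt\pi}{\sqrt\alpha}e^{-25\alpha/4}$. When $\beta=0$, the reindexing $n_1\mapsto -n_1$ gives $2\sum_{n_1\geq2}e^{-\alpha n_1^2}\leq 2\int_{1}^{\infty}e^{-\alpha t^2}\,dt\leq\tfrac{\sqrt\pi}{\sqrt\alpha}e^{-\alpha}$. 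None of this presents a genuine obstacle; the only points needing attention are the bookkeeping of which integration window to use (always shifted left of the index, to preserve the inequality direction for the decreasing integrand) and the observation that the ``far'' half of each symmetric sum is dominated term-by-term by the ``near'' half, so it costs no more than a factor $2$.
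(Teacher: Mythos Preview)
Your proof is correct and follows essentially the same approach as the paper: isolate the one or two terms nearest the maximum of the Gaussian, bound each monotone tail by the corresponding integral, and invoke Lemma~\ref{polaire}. The only cosmetic difference is that you first normalize to $\beta\in[0,\tfrac12]$ (so $n_0=-1$ becomes explicit), whereas the paper works directly with $n_0=\lfloor -\beta\rfloor$ and states that the remaining three inequalities ``s'en d\'eduisent'' without writing them out; your explicit treatment of those three cases is fine.
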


\begin{proof}
On d\'emontre la premi\`ere in\'egalit\'e, $\beta \notin{\mathbb{Z}}$, les autres s'en d\'eduisent.
On m\`ene une comparaison s\'erie-int\'egrale pour la fonction $f$ donn\'ee par $f(x)=e^{-\alpha(x+\beta)^{2}}$. On note $n_{0}$ le plus grand entier inf\'erieur \`a $-\beta$ (on notera $n_{0}=\lfloor -\beta \rfloor$) et on utilise la distance $d(\beta,\mathbb{Z})=\min\{|n_{0}+\beta|,|n_{0}+1+\beta|\}$. On obtient alors la majoration :

\begin{tabular}{lll}
$\displaystyle{\sum_{n_{1}\in{\mathbb{Z}}}e^{-\alpha(n_{1}+\beta)^{2}}}$ & $\leq$ & $\displaystyle{\int_{-\infty}^{n_{0}}e^{-\alpha(x+\beta)^{2}}dx +e^{-\alpha(n_{0}+\beta)^{2}}+e^{-\alpha(n_{0}+1+\beta)^{2}}+\int_{n_{0}+1}^{+\infty}e^{-\alpha(x+\beta)^{2}}dx,}$\\
\\
\end{tabular}

donc :
$$\sum_{n_{1}\in{\mathbb{Z}}}e^{-\alpha(n_{1}+\beta)^{2}}\leq \int_{(-n_{0}-\beta)\sqrt{\alpha}}^{+\infty}e^{-x^{2}}\frac{dx}{\sqrt{\alpha}} +2e^{-\alpha d(\beta,\mathbb{Z})^{2}}+\int_{(n_{0}+1+\beta)\sqrt{\alpha}}^{+\infty}e^{-x^{2}}\frac{dx}{\sqrt{\alpha}}.$$
Il suffit alors d'utiliser l'in\'egalit\'e du lemme \ref{polaire} pour conclure.
\end{proof}

\begin{lem} \label{lem3}
Soient $\alpha > 0$ et $\beta \in{\mathbb{R}}$. Alors :
$$\sum_{n_{1}\in{\mathbb{Z}}}\Big|n_{1}+\frac{1}{2}\Big|e^{-\alpha (n_{1}+\beta)^{2}}\leq C(\alpha,\beta)e^{-\alpha d(\beta,\mathbb{Z})^{2}},$$
o\`u l'on peut prendre :
$$C(\alpha,\beta)=\frac{1}{\alpha}+\Big|\beta-\frac{1}{2}\Big|\frac{\sqrt{\pi}}{\sqrt{\alpha}}+\frac{1}{2}\left(\sqrt{\left(\beta-\frac{1}{2}\right)^{2}+\frac{2}{\alpha}}+2\right)^{2}+\frac{1}{2}.$$
\end{lem}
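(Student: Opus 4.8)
The plan is to run a weighted version of the series–integral comparison already used for Lemma~\ref{lem2}, but now against the full summand
$g(x)=|x+\tfrac12|\,e^{-\alpha(x+\beta)^2}$, so that the sum in question is $\sum_{n_1\in\mathbb Z}g(n_1)$. The first step is to describe the shape of $g$. It is continuous, vanishes exactly at $x=-\tfrac12$ and at $\pm\infty$, and on each of the open half-lines $(-\infty,-\tfrac12)$ and $(-\tfrac12,\infty)$ it is smooth with $g'(x)=0$ precisely when $2\alpha(x+\tfrac12)(x+\beta)=1$. This quadratic has the two real roots $x_\pm=\tfrac12\big(-(\beta+\tfrac12)\pm\sqrt{(\beta-\tfrac12)^2+2/\alpha}\big)$, and one checks $x_-<-\tfrac12<x_+$, so that $g$ is increasing on $(-\infty,x_-]$, decreasing on $[x_-,-\tfrac12]$, increasing on $[-\tfrac12,x_+]$, and decreasing on $[x_+,\infty)$. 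The decisive numerical fact is that each local maximum satisfies $|x_\pm+\tfrac12|\le\sqrt{(\beta-\tfrac12)^2+2/\alpha}=:a$ (because $x_\pm+\tfrac12=\tfrac12\big((\tfrac12-\beta)\pm\sqrt{(\beta-\tfrac12)^2+2/\alpha}\big)$ and $|\tfrac12-\beta|\le a$); this is where the expression under the square root in $C(\alpha,\beta)$ comes from.

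Next I would split $\mathbb Z$ into the \emph{resonant} integers — those $n_1$ for which the adjacent unit interval $[n_1,n_1+1]$ or $[n_1-1,n_1]$ contains one of the three turning points $x_-$, $-\tfrac12$, $x_+$ — and the rest. There are at most six resonant integers. For a resonant $n_1$ near one of $x_\pm$ one has $|n_1+\tfrac12|\le|x_\pm+\tfrac12|+1\le a+1$, and for one near $-\tfrac12$ one has $|n_1+\tfrac12|\le 1$; crucially, for \emph{every} integer $n_1$ one has $(n_1+\beta)^2\ge d(\beta,\mathbb Z)^2$, hence $e^{-\alpha(n_1+\beta)^2}\le e^{-\alpha d(\beta,\mathbb Z)^2}$. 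Collecting the resonant contributions therefore yields a bound of the shape $(\text{const}\cdot a+\text{const})\,e^{-\alpha d(\beta,\mathbb Z)^2}$, which after absorbing the lower-order pieces is exactly what produces the $\tfrac12\big(\sqrt{(\beta-\tfrac12)^2+2/\alpha}+2\big)^2$ summand in $C(\alpha,\beta)$.

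For the non-resonant integers, monotonicity of $g$ on each of the four stretches gives $g(n_1)\le\int_I g$ over the adjacent unit interval $I$, and summing telescopes into tail integrals $\int_T^{+\infty}|x+\tfrac12|e^{-\alpha(x+\beta)^2}dx$ and $\int_{-\infty}^T(\cdots)dx$ whose endpoints $T$ are integers, so again $(T+\beta)^2\ge d(\beta,\mathbb Z)^2$. To evaluate these I would write $|x+\tfrac12|\le|x+\beta|+|\beta-\tfrac12|$ and use $\int|x+\beta|e^{-\alpha(x+\beta)^2}dx=-\tfrac1{2\alpha}e^{-\alpha(x+\beta)^2}$ together with Lemma~\ref{polaire} for $\int e^{-\alpha(x+\beta)^2}dx$; this feeds in the $\tfrac1\alpha$ term and the $|\beta-\tfrac12|\sqrt\pi/\sqrt\alpha$ term, each carrying the factor $e^{-\alpha d(\beta,\mathbb Z)^2}$ thanks to the endpoint bound. (Equivalently one may perform the triangle-inequality split at the outset, reducing the sum to $\sum_{n_1}|n_1+\beta|e^{-\alpha(n_1+\beta)^2}+|\beta-\tfrac12|\sum_{n_1}e^{-\alpha(n_1+\beta)^2}$ and bounding the second sum directly by Lemma~\ref{lem2}.)

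The main obstacle is the bookkeeping in the resonant step. When $\alpha^{-1}$ is small and $\beta$ is close to $\tfrac12$ the three turning points $x_-,-\tfrac12,x_+$ collapse into an interval of length $\approx a\ll 1$, so there are in fact only two or three resonant integers rather than six, and one must exploit this clustering to keep the constant down to the stated closed form; when the turning points are well separated the crude count is already essentially sharp. Landing exactly on $C(\alpha,\beta)$ — as opposed to a bound merely of the same shape — is the only genuinely delicate point, since the analytic ingredients (the monotonicity intervals of $g$, the Gaussian tail estimate of Lemma~\ref{polaire}, and the inequality $(n_1+\beta)^2\ge d(\beta,\mathbb Z)^2$) are all routine.
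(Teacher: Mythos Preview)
Your shape analysis of $g(x)=|x+\tfrac12|e^{-\alpha(x+\beta)^2}$ and the location of the critical points $x_-<-\tfrac12<x_+$ are correct, and the tail integrals are handled exactly as in the paper. The gap is in your treatment of the middle. You declare only the integers adjacent to the three turning points (at most six of them) to be ``resonant'' and claim that the remaining sum telescopes into \emph{tail} integrals $\int_{-\infty}^{T}$ and $\int_{T}^{+\infty}$. That is not so: the non-resonant integers lying on the two inner monotonicity stretches $[x_-,-\tfrac12]$ and $[-\tfrac12,x_+]$ telescope into \emph{interior} integrals over subintervals of $[x_-,x_+]$, and these intervals will typically contain the point $x=-\beta$ where the Gaussian factor equals $1$. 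Such an interior integral is of size $\asymp \tfrac{1}{\alpha}+|\beta-\tfrac12|\sqrt{\pi/\alpha}$ \emph{without} the factor $e^{-\alpha d(\beta,\mathbb Z)^2}$, and for large $\alpha$ this cannot be absorbed into $C(\alpha,\beta)\,e^{-\alpha d(\beta,\mathbb Z)^2}$. A second symptom of the same miscount: your resonant block contributes a term \emph{linear} in $a=\sqrt{(\beta-\tfrac12)^2+2/\alpha}$, yet the target $\tfrac12(a+2)^2$ is \emph{quadratic} in $a$ --- so your resonant piece is too small to ``produce'' it.

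The fix is exactly what the paper does: enlarge the resonant block to \emph{all} integers between $N_1=\lfloor x_-\rfloor$ and $N_2+1$ with $N_2=\lfloor x_+\rfloor$ (there are $\asymp a$ of them, not six), and on this whole block use the pointwise bound $e^{-\alpha(n_1+\beta)^2}\le e^{-\alpha d(\beta,\mathbb Z)^2}$ together with the exact arithmetic evaluation of $\sum_{n_1=N_1}^{N_2+1}|n_1+\tfrac12|$. That sum equals $\tfrac12\bigl(N_1^2+(N_2+2)^2\bigr)$, and a short manipulation using $N_1\le -1$ and $N_2-N_1\le a+1$ yields the $\tfrac12(a+2)^2+\tfrac12$ term. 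With the middle handled this way, the only integrals left are the genuine outer tails beyond $N_1$ and $N_2+1$, and those do carry the factor $e^{-\alpha d(\beta,\mathbb Z)^2}$ via Lemma~\ref{polaire}, giving the $\tfrac{1}{\alpha}$ and $|\beta-\tfrac12|\sqrt{\pi}/\sqrt{\alpha}$ contributions.
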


\begin{proof}
On m\`ene ici une comparaison s\'erie-int\'egrale pour la fonction $f$ donn\'ee par $f(x)=|x+\frac{1}{2}|e^{-\alpha(x+\beta)^{2}}$. Il y a ici trois changements de sens de variation (car il y en a un en $-1/2$). On notera $x_{\max 1}<-\frac{1}{2}<x_{\max 2}$ les abscisses des trois maxima locaux. L'\'etude de la d\'eriv\'ee donne les expressions $x_{\substack{\max 1\\ \max2}}=\frac{1}{2}\left(-\beta-1/2\mp\sqrt{(\beta+1/2)^{2}-2\beta+2/\alpha}\right).$ Posons $\displaystyle{N_{1}=\lfloor x_{\max 1} \rfloor}$ et $\displaystyle{N_{2}=\lfloor x_{\max 2} \rfloor}$, o\`u $\lfloor x \rfloor$ d\'esigne la partie enti\`ere de $x$. On a alors la majoration :
$$\sum_{n_{1}\in{\mathbb{Z}}}\Big|n_{1}+\frac{1}{2}\Big|e^{-\alpha(n_{1}+\beta)^{2}}\leq A + B + C ,$$
o\`u :
$$A=\int_{-\infty}^{N_{1}}\Big|x+\frac{1}{2}\Big|e^{-\alpha(x+\beta)^{2}}dx,\;\;B=\sum_{n_{1}=N_{1}}^{N_{2}+1}\Big|n_{1}+\frac{1}{2}\Big|e^{-\alpha(n_{1}+\beta)^{2}},\;\;C=\int_{N_{2}+1}^{+\infty}\Big|x+\frac{1}{2}\Big|e^{-\alpha(x+\beta)^{2}}dx.$$

Alors en posant $r=(-N_{1}-\beta)\sqrt{\alpha}>0$ on obtient par in\'egalit\'e triangulaire :
$$\displaystyle{A=\int_{r}^{+\infty}\Big|\frac{x}{\sqrt{\alpha}}+\beta-\frac{1}{2}\Big|e^{-x^{2}}\frac{dx}{\sqrt{\alpha}}}\leq \frac{1}{\alpha}\int_{r}^{+\infty}xe^{-x^{2}}dx \;+ \; \frac{|\beta-\frac{1}{2}|}{\sqrt{\alpha}}\int_{r}^{+\infty}e^{-x^{2}}dx,$$
donc par int\'egration directe et par l'in\'egalit\'e du lemme \ref{polaire} :
$$A \leq \frac{1}{2\alpha}e^{-\alpha(-N_{1}-\beta)^{2}}+ \frac{|\beta-\frac{1}{2}|\sqrt{\pi}}{2\sqrt{\alpha}}e^{-\alpha(-N_{1}-\beta)^{2}}\leq \left(\frac{1}{2\alpha}+\frac{|\beta-\frac{1}{2}|\sqrt{\pi}}{2\sqrt{\alpha}} \right)e^{-\alpha d(\beta,\mathbb{Z})^{2}}.$$

On obtient la m\^eme majoration pour le terme $C$. Reste le terme m\'edian :
$$B \leq \sum_{n_{1}=N_{1}}^{N_{2}+1}\Big|n_{1}+\frac{1}{2}\Big|e^{-\alpha d(\beta,\mathbb{Z})^{2}}\leq \Big(\sum_{n_{1}=0}^{N_{2}+1}\Big(n_{1}+\frac{1}{2}\Big)-\sum_{n_{1}=N_{1}}^{-1}\Big(n_{1}+\frac{1}{2}\Big) \Big)e^{-\alpha d(\beta,\mathbb{Z})^{2}},$$
donc :
$$Be^{\alpha d(\beta,\mathbb{Z})^{2}} \leq \frac{N_{1}^{2}+N_{2}^{2}+4N_{2}+4}{2} =\frac{(N_{2}-N_{1})^{2}+2N_{1}N_{2}+4N_{2}+4}{2},$$
donc en utilisant les in\'egalit\'es $N_{1}\leq -1$ et $2\leq -2N_{1}$ :
$$Be^{\alpha d(\beta,\mathbb{Z})^{2}} \leq \frac{(N_{2}-N_{1})^{2}+2(N_{2}-N_{1})+2}{2}=\frac{(N_{2}-N_{1}+1)^{2}+1}{2}.$$

Or $0\leq N_{2}-N_{1}<x_{\mathrm{max2}}-x_{\mathrm{max1}}+1=\sqrt{(\beta-\frac{1}{2})^{2}+\frac{2}{\alpha}}+1$, donc :
$$B \leq \Big(\frac{1}{2}\Big(\sqrt{\Big(\beta-\frac{1}{2}\Big)^{2}+\frac{2}{\alpha}}+2 \Big)^{2}+\frac{1}{2}\Big)e^{-\alpha d(\beta,\mathbb{Z})^{2}}.$$

Il suffit alors de r\'eunir les majorations des termes $A,B,C$ pour obtenir le lemme.
\end{proof}

\begin{lem} \label{lem4}
On pose $\delta(a+Y)=\min\{d(\frac{1}{2}+y_{1},\mathbb{Z}),d(\frac{1}{2}+y_{2},\mathbb{Z})\}$. On suppose que $||(X,Y)||\leq \frac{1}{2}$.
On a alors la majoration pour tout $u\in{[0,1]}$ :
$$\sum_{n\in{\mathbb{Z}^{2}}}\Big|n_{i}+\frac{1}{2}\Big| e^{-\pi \, ^{t}(n+a+Yu) \Ima \tau (n+a+Yu)}\leq C_{2}(y_{i})e^{-\pi(\Tr(\Ima \tau)-2\Ima\tau_{12})\delta(a+Y)^{2}},$$
o\`u l'on peut prendre : $C_{2}(y_{i})=4\Big(\frac{4}{\pi}+2|y_{i}|+\frac{1}{2}\Big(\sqrt{y_{i}^{2}+\frac{8}{\pi}}+2\Big)^{2}+\frac{1}{2}\Big).$
\end{lem}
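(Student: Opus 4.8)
The plan is to diagonalise the quadratic form in the exponent, reducing the two--dimensional sum to a product of two one--dimensional sums, and then to feed the coordinate carrying the factor $|n_i+\tfrac12|$ into Lemma \ref{lem3} and the other coordinate into Lemma \ref{lem2}. Write $a=[\tfrac12,\tfrac12]$, and for $n=(n_1,n_2)\in\mathbb{Z}^2$ and $u\in[0,1]$ put $R=n+a+Yu$, so $R_1=n_1+\tfrac12+y_1u$ and $R_2=n_2+\tfrac12+y_2u$ are real. First I would apply Lemma \ref{lem1} to get
$$\pi\,^{t}R\,\Ima\tau\,R\;\ge\;\alpha_1\Big(n_1+\tfrac12+y_1u\Big)^2+\alpha_2\Big(n_2+\tfrac12+y_2u\Big)^2,\qquad \alpha_j:=\pi(\Ima\tau_j-\Ima\tau_{12}).$$
The Siegel inequalities $\Ima\tau_2\ge\Ima\tau_1\ge 2\Ima\tau_{12}\ge 0$ and $\Ima\tau_1\ge\sqrt3/2$ give $\Ima\tau_j-\Ima\tau_{12}\ge\tfrac12\Ima\tau_1\ge\sqrt3/4$, hence $\alpha_j\ge\pi\sqrt3/4>1$; in particular $\alpha_j>0$, and using $\sqrt3\ge 1$ crudely one gets $\tfrac1{\alpha_j}\le\tfrac4\pi$, $\tfrac2{\alpha_j}\le\tfrac8\pi$ and $\sqrt{\pi/\alpha_j}\le 2$, which are precisely the numerical ingredients appearing in $C_2(y_i)$.

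Since $|n_i+\tfrac12|$ depends on a single index, the majorised sum factorises as a product over $n_1$ and over $n_2$. I would treat the case $i=1$, the case $i=2$ being symmetric after exchanging the roles of the two coordinates (the diagonal bound of Lemma \ref{lem1} and the lower bounds on the $\alpha_j$ are symmetric in $1\leftrightarrow 2$). Applying Lemma \ref{lem3} with $\alpha=\alpha_1$, $\beta=\tfrac12+y_1u$ to the $n_1$--sum and Lemma \ref{lem2} with $\alpha=\alpha_2$, $\beta=\tfrac12+y_2u$ to the $n_2$--sum (noting that for $\beta\in\mathbb{Z}$ the bound of Lemma \ref{lem2} is still $\le(2+\sqrt\pi/\sqrt{\alpha_2})e^{-\alpha_2 d(\beta,\mathbb{Z})^2}$ since then $d(\beta,\mathbb{Z})=0$), one obtains
$$\sum_{n\in\mathbb{Z}^2}\Big|n_1+\tfrac12\Big|\,e^{-\pi\,^{t}R\,\Ima\tau\,R}\;\le\;C\!\big(\alpha_1,\tfrac12+y_1u\big)\Big(2+\tfrac{\sqrt\pi}{\sqrt{\alpha_2}}\Big)\,e^{-\alpha_1 d(\frac12+y_1u,\mathbb{Z})^2-\alpha_2 d(\frac12+y_2u,\mathbb{Z})^2},$$
with $C(\cdot,\cdot)$ the constant of Lemma \ref{lem3}. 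Because $\beta-\tfrac12=y_ju$ satisfies $|y_ju|\le|y_j|$ and $u^2\le 1$, the bounds on the $\alpha_j$ give $C(\alpha_1,\tfrac12+y_1u)\le\tfrac4\pi+2|y_1|+\tfrac12\big(\sqrt{y_1^2+\tfrac8\pi}+2\big)^2+\tfrac12$ and $2+\sqrt\pi/\sqrt{\alpha_2}\le 4$, whose product is exactly $C_2(y_1)$.

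It remains to identify the exponential factor, and this is the only place where the hypothesis $\|(X,Y)\|\le\tfrac12$ enters. Since $|y_j|\le\tfrac12$ and $u\in[0,1]$ one has $\tfrac12+y_ju\in[0,1]$, whence $d(\tfrac12+y_ju,\mathbb{Z})=\tfrac12-|y_j|u\ge\tfrac12-|y_j|=d(\tfrac12+y_j,\mathbb{Z})\ge\delta(a+Y)$ for $j=1,2$, by definition of $\delta(a+Y)$. Therefore
$$\alpha_1 d(\tfrac12+y_1u,\mathbb{Z})^2+\alpha_2 d(\tfrac12+y_2u,\mathbb{Z})^2\;\ge\;(\alpha_1+\alpha_2)\,\delta(a+Y)^2\;=\;\pi\big(\Tr(\Ima\tau)-2\Ima\tau_{12}\big)\,\delta(a+Y)^2,$$
which converts the exponential above into $e^{-\pi(\Tr(\Ima\tau)-2\Ima\tau_{12})\delta(a+Y)^2}$ and completes the proof. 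I expect the genuinely delicate point to be precisely this last comparison of distances: one must observe that multiplying $Y$ by $u\le 1$ can only push $\tfrac12+y_ju$ towards the midpoint of $[0,1]$, i.e.\ away from $\mathbb{Z}$, so that the extremal configuration is $u=1$ — after that, everything reduces to plugging the one--dimensional Lemmas \ref{lem2} and \ref{lem3} into the diagonalised sum and collecting the explicit constants.
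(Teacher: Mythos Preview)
Your proposal is correct and follows exactly the same route as the paper: diagonalise the quadratic form via Lemma~\ref{lem1}, factor the resulting two--dimensional sum, apply Lemma~\ref{lem3} (with $\alpha=\pi(\Ima\tau_i-\Ima\tau_{12})$, $\beta=\tfrac12+y_iu$) to the weighted coordinate and Lemma~\ref{lem2} to the other, then observe that the bound is worst at $u=1$ since $|y_j|\le\tfrac12$ forces $d(\tfrac12+y_ju,\mathbb{Z})=\tfrac12-|y_j|u$. The paper's own proof is terse to the point of being a pointer to these lemmas; your write--up simply fills in the numerical bookkeeping ($\alpha_j\ge\pi\sqrt3/4$, hence $1/\alpha_j\le 4/\pi$ and $\sqrt{\pi/\alpha_j}\le 2$) that yields the explicit constant $C_2(y_i)$.
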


\begin{proof}
On applique successivement les lemmes \ref{lem1}, \ref{lem2} et \ref{lem3} en sp\'ecialisant $\alpha = \pi(\Ima \tau_{i}-\Ima\tau_{12})$ et $\beta = \frac{1}{2}+uy_{i}$ (avec $u\in{[0,1]}$) pour $i\in{\{1,2\}}$. Le maximum sur $u$ est atteint, pour le majorant, en $u=1$, car $|y_{i}|\leq \frac{1}{2}$.
\end{proof}

Nous pouvons donc \`a pr\'esent montrer la proposition suivante :
\begin{prop}\label{archim}
Soit $C/k$ une courbe de genre 2 et soit $v$ une place archim\'edienne. Soit $P$ un point de
$\Jac(C)(\bar{k}_{v})\cong \mathbb{C}^{2}/(\mathbb{Z}^{2}+\tau_{v}
\mathbb{Z}^{2})$ hors du support du diviseur $\Theta$. On note $Z=X+\tau_{v} Y$ une coordonn\'ee de $P$, avec $Y=[y_{1},y_{2}]$. On d\'efinit la norme de vecteur $||(X,Y)||=\max\{|x_{1}|,|x_{2}|,|y_{1}|,|y_{2}|\}$. Alors
la hauteur locale archim\'edienne $\Lambda_{\Theta,v}$ peut \^etre minor\'ee de la façon suivante, d\`es que $||(X,Y)||\leq \frac{1}{2}$ :

\begin{center}
\begin{tabular}{lll}

$\Lambda_{\Theta,v}(P)$ & $\geq$ &$\displaystyle{\pi(\Tr(\Ima \tau_{v})-2\Ima\tau_{v,12})\delta(a+Y)^{2}-\log\Big(4+\frac{3}{2}\Tr(\Ima \tau_{v})\Big)}$\\
\\

 &  & $\displaystyle{+\log \frac{1}{||(X,Y)||}-\log C_{3}(Y),}$\\
\\
\end{tabular}
\end{center}
o\`u l'on peut prendre : $C_{3}(Y)=\max_{i\in\{1,2\}}8\pi \Big(\frac{4}{\pi}+2|y_{i}|+\frac{1}{2}\Big(\sqrt{y_{i}^{2}+\frac{8}{\pi}}+2\Big)^{2}+\frac{1}{2}\Big).$
\end{prop}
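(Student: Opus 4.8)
The plan is to rewrite $\Lambda_{\Theta,v}(P)$ in terms of $Z=X+\tau_v Y$ and then bound the theta factor from above. Since $\Ima Z=(\Ima\tau_v)Y$, one has ${}^t\Ima Z(\Ima\tau_v)^{-1}\Ima Z={}^tY(\Ima\tau_v)Y$, so that
$$\Lambda_{\Theta,v}(P)=-\log\bigl|\theta_{a,b}(Z)\bigr|_v+\pi\,{}^tY(\Ima\tau_v)Y,$$
and it suffices to produce an upper bound for $\bigl|\theta_{a,b}(Z)\bigr|_v\,e^{-\pi{}^tY(\Ima\tau_v)Y}$ of the shape $C_3(Y)\,\bigl(4+\tfrac32\Tr(\Ima\tau_v)\bigr)\,\|(X,Y)\|\,e^{-\pi(\Tr(\Ima\tau_v)-2\Ima\tau_{v,12})\delta(a+Y)^2}$; taking $-\log$ then gives the statement. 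The point to keep in mind is that the crude termwise estimate of $\theta_{a,b}$ (take absolute values term by term and complete the square) does not see the vanishing of $\theta_{a,b}$ at the origin, hence cannot produce the singular term $\log(1/\|(X,Y)\|)$; this is precisely the pole of the local height along $\Theta$, and it must be extracted by hand.

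To do this I would use that the chosen characteristic $[a,b]=[1/2,1/2,0,1/2]$ is odd, so $\theta_{a,b}(0)=0$, and write, integrating the gradient along the segment from $0$ to $Z$,
$$\theta_{a,b}(Z)=\int_0^1\sum_{j=1}^2 Z_j\,(\partial_j\theta_{a,b})(uZ)\,du,\qquad\text{whence}\qquad\bigl|\theta_{a,b}(Z)\bigr|\le\sum_{j=1}^2|Z_j|\int_0^1\bigl|(\partial_j\theta_{a,b})(uZ)\bigr|\,du,$$
the termwise-differentiated series converging uniformly on compacts since $\Ima\tau_v>0$. For the coordinates, writing $Z_j=X_j+\sum_l\tau_{v,jl}Y_l$, the triangle inequality together with $|\Ree\tau_{v,jl}|\le\tfrac12$, $\Ima\tau_{v,12}\le\tfrac12\Ima\tau_{v,1}$, $\Ima\tau_{v,1}\le\Ima\tau_{v,2}$ and $\|(X,Y)\|\le\tfrac12$ yields $\sum_j|Z_j|\le\bigl(4+\tfrac32\Tr(\Ima\tau_v)\bigr)\,\|(X,Y)\|$, which is the origin of the factor $4+\tfrac32\Tr(\Ima\tau_v)$ in the statement.

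It then remains to estimate $\int_0^1|(\partial_j\theta_{a,b})(uZ)|\,du$. Differentiating the defining series gives $(\partial_j\theta_{a,b})(W)=2i\pi\sum_{n}(n_j+\tfrac12)e^{2i\pi(\dots)}$; taking moduli term by term at $W=uZ=uX+\tau_v(uY)$ and completing the square as in the crude estimate produces the factor $e^{\pi\,{}^t(uY)(\Ima\tau_v)(uY)}$, which is $\le e^{\pi\,{}^tY(\Ima\tau_v)Y}$ because $u^2\le1$ and $\Ima\tau_v>0$, and leaves the sum $2\pi\sum_n|n_j+\tfrac12|\,e^{-\pi\,{}^t(n+a+uY)(\Ima\tau_v)(n+a+uY)}$. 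This is exactly the quantity bounded uniformly in $u\in[0,1]$ by Lemma \ref{lem4}, namely by $2\pi C_2(y_j)\,e^{-\pi(\Tr(\Ima\tau_v)-2\Ima\tau_{v,12})\delta(a+Y)^2}$. Combining the three estimates gives $\bigl|\theta_{a,b}(Z)\bigr|_v\,e^{-\pi\,{}^tY(\Ima\tau_v)Y}\le\bigl(2\pi\max_j C_2(y_j)\bigr)\bigl(4+\tfrac32\Tr(\Ima\tau_v)\bigr)\|(X,Y)\|\,e^{-\pi(\Tr(\Ima\tau_v)-2\Ima\tau_{v,12})\delta(a+Y)^2}$, and one checks that $2\pi\max_j C_2(y_j)$ is precisely the announced $C_3(Y)$; taking $-\log$ of both sides finishes the proof.

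The genuine content is thus the observation made at the outset: the lower bound must incorporate the logarithmic singularity of $\Lambda_{\Theta,v}$ along $\Theta$, and the clean device for this is the integral-of-the-gradient identity made available by the oddness of the characteristic. Everything else is bookkeeping — the fundamental-domain inequalities controlling $|Z_j|$, the square-completion, the uniform-in-$u$ application of Lemma \ref{lem4}, and the assembly of the constants — and the only places where care is genuinely required are in not losing the uniformity in $u$ (which is exactly why Lemma \ref{lem4} was stated with the parameter $u$) and in tracking the constants so that $C_3(Y)=2\pi\max_j C_2(y_j)$ comes out as stated. The hypothesis $\|(X,Y)\|\le\tfrac12$ is used twice: to bound $\sum_j|Z_j|$ and to meet the hypothesis of Lemma \ref{lem4}.
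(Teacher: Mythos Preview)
Your proof is correct and follows the same strategy as the paper: exploit the vanishing $\theta_{a,b}(0)=0$ of the odd theta function via a first-order expansion along the segment from $0$, then feed the resulting gradient sums into Lemma~\ref{lem4}. The only cosmetic difference is that the paper views $g(X,Y)=\theta_{a,b}(X+\tau Y)$ as a function of the four real variables $(x_1,x_2,y_1,y_2)$ and applies the mean value inequality with the four real partials (the $\tau$-coefficients then appear in $\partial g/\partial y_j$), whereas you differentiate holomorphically in $Z_j$ and let the $\tau$-coefficients emerge instead from the bound $\sum_j|Z_j|\le(2+|\tau_1|+|\tau_2|+2|\tau_{12}|)\,\|(X,Y)\|$; by the chain rule these are two packagings of the same estimate and lead to the identical factor $4+\tfrac32\Tr(\Ima\tau_v)$ and the identical constant $C_3(Y)=2\pi\max_j C_2(y_j)$. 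One tiny remark: the hypothesis $\|(X,Y)\|\le\tfrac12$ is not actually needed to obtain $\sum_j|Z_j|\le(4+\tfrac32\Tr(\Ima\tau_v))\|(X,Y)\|$; it is used only to invoke Lemma~\ref{lem4}.
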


\begin{Remarque}
On a la majoration $C_{3}(Y)\leq
239,2$ pour $y_{i}\leq 1/2$.
\end{Remarque}

\begin{proof}
On notera tout au long de la preuve : $\tau=\tau_{v}$.
Calculons, pour $Z=X+\tau Y$, avec $X$ et $Y$ des vecteurs de $\mathbb{R}^{2}$ : 
$$\, ^{t}\Ima Z(\Ima \tau)^{-1}\Ima Z=\, ^{t}\Ima (\tau Y)(\Ima \tau)^{-1}\Ima (\tau Y)=\, ^{t}Y \Ima \tau Y.$$
Posons : $$\zeta_{n}(X,Y):=2i\pi \left(\frac{1}{2}\, ^{t}(n+a)\tau(n+a)+\, ^{t}(n+a)\tau Y+\, ^{t}(n+a)(X+b)\right),$$
et : $$g(X,Y):=\theta_{a,b}(X+\tau Y)=\sum_{n\in{\mathbb{Z}^{2}}}e^{\zeta_{n}(X,Y)}.$$
On veut donc majorer la quantit\'e : $|g(X,Y)|e^{-\pi \, ^{t}Y \Ima \tau Y}.$ Tout d'abord par l'in\'egalit\'e des accroissements finis, avec
$||(X,Y)||=\sup\{|x_{1}|,|x_{2}|,|y_{1}|,|y_{2}|\}$ et $|||.|||$ la norme subordonn\'ee :
$$|g(X,Y)-g(0,0)|\leq \left(\max_{(X',Y')\in{[(0,0),(X,Y)]}}|||dg_{|(X',Y')}|||\right)||(X,Y)-(0,0)||.$$
donc comme $g(0,0)=\theta_{a,b}(0)=0$ :
$$|g(X,Y)|\leq \left(\max_{u\in{[0,1]}}|||dg_{|u(X,Y)}|||\right)||(X,Y)||.$$
On a alors en \'ecrivant $[X,Y]=[x_{1}, \; x_{2}, \; y_{1}, \; y_{2}]$ :

\begin{center}
\begin{tabular}{lll}

$\displaystyle{\frac{\partial g}{\partial x_{1}}(X,Y)}$ & $=$ & $\displaystyle{\sum_{n\in{\mathbb{Z}^{2}}}2\pi i\left(n_{1}+\frac{1}{2}\right)e^{\zeta_{n}(X,Y)}},$\\

$\displaystyle{\frac{\partial g}{\partial x_{2}}(X,Y)}$ & $=$ & $\displaystyle{\sum_{n\in{\mathbb{Z}^{2}}}2\pi i\left(n_{2}+\frac{1}{2}\right)e^{\zeta_{n}(X,Y)}},$\\

$\displaystyle{\frac{\partial g}{\partial y_{1}}(X,Y)}$ & $=$ & $\displaystyle{\sum_{n\in{\mathbb{Z}^{2}}}2\pi i\left(\left(n_{1}+\frac{1}{2}\right)\tau_{1}+\left(n_{2}+\frac{1}{2}\right)\tau_{12}\right)e^{\zeta_{n}(X,Y)}},$\\

$\displaystyle{\frac{\partial g}{\partial y_{2}}(X,Y)}$ & $=$ & $\displaystyle{\sum_{n\in{\mathbb{Z}^{2}}}2\pi i\left(\left(n_{1}+\frac{1}{2}\right)\tau_{12}+\left(n_{2}+\frac{1}{2}\right)\tau_{2}\right)e^{\zeta_{n}(X,Y)}}.$\\

\end{tabular}
\end{center}

De plus :

\begin{center}
\begin{tabular}{lll}

$\Ree(\zeta_{n}(Xu,Yu))$ & $=$ & $-\pi \, ^{t}(n+a)\Ima \tau (n+a)-2\pi \, ^{t}(n+a)\Ima \tau Yu$\\
\\

 & $=$ & $-\pi \, ^{t}(n+a+Yu)\Ima \tau (n+a+Yu)+\pi u^{2}\, ^{t}Y \Ima \tau Y.$

\end{tabular}
\end{center}

On obtient alors pour tout vecteur $(X,Y)$ non nul :

\begin{center}
\begin{tabular}{llll} 
$\displaystyle{\frac{|g(X,Y)|}{2\pi ||(X,Y)||}}$ & $\leq$ & $\displaystyle{\max_{u\in{[0,1]}}}$ & $\displaystyle{\Big[\sum_{n\in{\mathbb{Z}^{2}}}\Big(\Big|n_{1}+\frac{1}{2}\Big|+\Big|n_{2}+\frac{1}{2}\Big|\Big)e^{\Ree(\zeta_{n}(Xu,Yu))}}$\\
\\
&  &  & $\displaystyle{+ \sum_{n\in{\mathbb{Z}^{2}}}\left(\Big|n_{1}+\frac{1}{2}\Big||\tau_{1}|+\Big|n_{2}+\frac{1}{2}\Big||\tau_{12}|\right)e^{\Ree(\zeta_{n}(Xu,Yu))}}$\\
\\
&  &  & $\displaystyle{+ \sum_{n\in{\mathbb{Z}^{2}}}\left(\Big|n_{1}+\frac{1}{2}\Big||\tau_{12}|+\Big|n_{2}+\frac{1}{2}\Big||\tau_{2}|\right)e^{\Ree(\zeta_{n}(Xu,Yu))}\Big]}.$\\

\end{tabular}
\end{center}

On obtient ainsi :

\begin{tabular}{lll}

$\displaystyle{\frac{|g(X,Y)|e^{-\pi \, ^{t}Y \Ima \tau Y}}{2\pi ||(X,Y)||}}$ & $\leq$ & $\displaystyle{\max_{u\in{[0,1]}}\Big[\sum_{n\in{\mathbb{Z}^{2}}}\Big((1+|\tau_{1}|+|\tau_{12}|)\Big|n_{1}+\frac{1}{2}\Big|}+$\\
\\

&  & $\displaystyle{(1+|\tau_{2}|+|\tau_{12}|)\Big|n_{2}+\frac{1}{2}\Big|\Big)e^{\Ree(\zeta_{n}(Xu,Yu))-\pi \, ^{t}Y \Ima \tau Y} \Big]},$\\
\\
\end{tabular}

donc :

\begin{tabular}{lll}

$\displaystyle{\frac{|g(X,Y)|e^{-\pi \, ^{t}Y \Ima \tau Y}}{2\pi ||(X,Y)||}}$ & $\leq$ & $\displaystyle{\max_{u\in{[0,1]}}\Big[\sum_{n\in{\mathbb{Z}^{2}}}\Big((1+|\tau_{1}|+|\tau_{12}|)\Big|n_{1}+\frac{1}{2}\Big|}+\hspace{3cm}$\\
\\

&  & $\displaystyle{(1+|\tau_{2}|+|\tau_{12}|)\Big|n_{2}+\frac{1}{2}\Big|\Big)e^{-\pi \, ^{t}(n+a+Yu) \Ima \tau (n+a+Yu)}\Big]}.$ \\
\\
\end{tabular}

On utilise alors le lemme \ref{lem4} dans la derni\`ere majoration :

\begin{tabular}{lll}

$\displaystyle{\frac{|g(X,Y)|e^{-\pi \, ^{t}Y \Ima \tau Y}}{2\pi ||(X,Y)||}}$ & $\leq$ & $\displaystyle{(1+|\tau_{1}|+|\tau_{12}|)C_{2}(y_{1})}e^{-\pi(\Tr(\Ima \tau)-2\Ima\tau_{12})\delta(a+Y)^{2}}$\\
\\

&  & $\displaystyle{+ (1+|\tau_{2}|+|\tau_{12}|)C_{2}(y_{2})}e^{-\pi(\Tr(\Ima \tau)-2\Ima\tau_{12})\delta(a+Y)^{2}},$\\
\end{tabular}

\vspace{0.1cm}

donc on obtient en notant $C_{2}(Y):=\max\{C_{2}(y_{1}),C_{2}(y_{2})\}$ :
$$
\frac{|g(X,Y)|e^{-\pi \, ^{t}Y \Ima \tau Y}}{2\pi ||(X,Y)||}\leq \displaystyle{(2+|\tau_{1}|+|\tau_{2}|+2|\tau_{12}|)}C_{2}(Y)e^{-\pi(\Tr(\Ima \tau)-2\Ima\tau_{12})\delta(a+Y)^{2}}.
$$
En prenant l'oppos\'e du logarithme de cette derni\`ere in\'egalit\'e il vient finalement :

\begin{center}
\begin{tabular}{lll}

$\Lambda_{\Theta,v}(P)$ & $\geq$ &$\displaystyle{\pi(\Tr(\Ima \tau)-2\Ima\tau_{12})\delta(a+Y)^{2}-\log(2+|\tau_{1}|+|\tau_{2}|+2|\tau_{12}|)}$\\
\\

&  & $\displaystyle{+\log \frac{1}{||(X,Y)||}-\log 2\pi C_{2}(Y),}$\\
\end{tabular}
\end{center}

De plus, en utilisant $|\tau_{i}|\leq \frac{1}{2}+\Ima\tau_{i}$ et $|\tau_{12}|\leq\frac{1}{2}+\frac{1}{2}\Ima\tau_{i}$ pour $i=1$ et $i=2$ on obtient :
$$\log(2+|\tau_{1}|+|\tau_{2}|+2|\tau_{12}|)\leq \log\Big(4+\frac{3}{2}\Tr(\Ima \tau)\Big).$$

Ceci ach\`eve la preuve de la proposition \ref{archim}.
\end{proof}

\subsubsection{Minoration de $|\Delta(\tau_{v})|$}

On va donner dans cette section une minoration de la norme des constantes th\^eta paires en dimension 2. Soit $v$ une place archim\'edienne. On note ici $\tau=\tau_{v}$. Comme $\Delta(\tau)$ s'annule uniquement en $\tau_{12}=0$ (pour $\tau$ dans $F_2$, voir par exemple \cite{Kling}, proposition 2 page 115), on s'attend \`a voir appara\^itre une condition sur l'espace de modules des surfaces ab\'eliennes principalement polaris\'ees. 

Il y a dix constantes th\^eta $\theta_{ab}(0,\tau)$ non nulles en dimension 2 ; elles correspondent exactement aux caract\'eristiques paires :
$$\left[\begin{array}{c}
a \\
b 	
\end{array}\right] \in{\theta_{1}=\left\{ \left[\begin{array}{c}
0 \\
0 \\
0 \\
0 	
\end{array}\right], \left[\begin{array}{c}
0 \\
0 \\
0 \\
1/2 	
\end{array}\right],\left[\begin{array}{c}
0 \\
0 \\
1/2 \\
0 	
\end{array}\right],\left[\begin{array}{c}
0 \\
0 \\
1/2 \\
1/2 	
\end{array}\right]\right\}},$$

$$\left[\begin{array}{c}
a \\
b 	
\end{array}\right] \in{\theta_{2}=\left\{\left[\begin{array}{c}
1/2 \\
0 \\
0 \\
0 	
\end{array}\right],\left[\begin{array}{c}
0 \\
1/2 \\
0 \\
0 	
\end{array}\right],\left[\begin{array}{c}
1/2 \\
1/2 \\
0 \\
0 	
\end{array}\right],\left[\begin{array}{c}
0 \\
1/2 \\
1/2 \\
0 	
\end{array}\right],\left[\begin{array}{c}
1/2 \\
0 \\
0 \\
1/2 	
\end{array}\right],\left[\begin{array}{c}
1/2 \\
1/2 \\
1/2 \\
1/2 	
\end{array}\right] \right\}}.$$

On a donc $\mathcal{Z}_{2}=\theta_{1}\cup\theta_{2}$.

On rappelle la relation :
$$\Big|\theta_{ab}(0,\tau)\Big|=\Big|\theta_{00}(\tau a +b,\tau)e^{i\pi \,^{t}a\tau a+2i\pi\,^{t}ab}\Big|=\Big|\theta_{00}(\tau a +b,\tau)\Big|e^{-\pi \,^{t}a\Ima \tau a}.$$

Si on pose $Q_{a,b}(n)=^{t}(n+a)\tau(n+a)+2\, ^{t}(n+a)(b)$, on a de plus :
$$\theta_{a,b}(0,\tau)=\sum_{n\in{\mathbb{Z}^{2}}}e^{i\pi Q_{a,b}(n)}.$$

\begin{lem}\label{carac generique}
Soit $(a,b)\in{\mathcal{Z}_{2}}$. Soit $T_{a,b}=\Big\{\displaystyle{n\in{\mathbb{Z}^{2}}\,|\,\Ima Q_{a,b}(n)=\min_{m\in{\mathbb{Z}^{2}}}\Ima Q_{a,b}(m)}\Big\}$. On a la propri\'et\'e :
$$\forall n,n'\in{T_{a,b}},\; e^{i\pi Q_{a,b}(n)}=e^{i\pi Q_{a,b}(n')}.$$ De plus :
$$\Big|\theta_{a,b}(0,\tau) \Big|\geq 2\Card(T_{a,b})e^{-\pi \min_{m\in{\mathbb{Z}^{2}}}\Ima Q_{a,b}(m)}-\sum_{n\in{\mathbb{Z}^{2}}}e^{-\pi\Ima Q_{a,b}(n)}.$$

\end{lem}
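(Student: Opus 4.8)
Le plan est de ramener l'\'enonc\'e \`a une propri\'et\'e arithm\'etique de la forme quadratique figurant dans l'exposant, puis de conclure par une in\'egalit\'e triangulaire. On commence par remarquer que, $a$, $b$ et $n$ \'etant r\'eels, $\Ima Q_{a,b}(n)={}^{t}(n+a)\,\Ima\tau\,(n+a)$ est une forme quadratique d\'efinie positive en $n+a$, car $\tau\in F_{2}$ impose $\Ima\tau>0$. Le minimum $m:=\min_{n\in\mathbb{Z}^{2}}\Ima Q_{a,b}(n)$ est donc atteint et $T_{a,b}$ est fini et non vide ; chaque $n\in T_{a,b}$ contribue au d\'eveloppement de $\theta_{a,b}(0,\tau)$ par un terme de module exactement $e^{-\pi m}$.

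Le c\oe ur de la preuve est la constance de la phase $e^{i\pi Q_{a,b}(n)}$ lorsque $n$ parcourt $T_{a,b}$. La sym\'etrie clef est l'involution $\iota:n\mapsto -n-2a$ de $\mathbb{Z}^{2}$ (on a bien $2a\in\mathbb{Z}^{2}$ et $\iota(n)+a=-(n+a)$). En posant $u=n+a$, l'\'egalit\'e $\Ima Q_{a,b}(\iota(n))={}^{t}(-u)\,\Ima\tau\,(-u)={}^{t}u\,\Ima\tau\,u=\Ima Q_{a,b}(n)$ montre que $T_{a,b}$ est stable par $\iota$, et l'on a
\[
Q_{a,b}(\iota(n))={}^{t}(-u)\tau(-u)+2\,{}^{t}(-u)b=Q_{a,b}(n)-4\,{}^{t}u\,b .
\]
Or $4\,{}^{t}u\,b=4\,{}^{t}n\,b+4\,{}^{t}a\,b$, o\`u $4\,{}^{t}n\,b\in 2\mathbb{Z}$ puisque $b\in\frac{1}{2}\mathbb{Z}^{2}$, et $4\,{}^{t}a\,b\in 2\mathbb{Z}$ pr\'ecis\'ement parce que $(a,b)$ est une caract\'eristique \emph{paire} ; d'o\`u $e^{i\pi Q_{a,b}(\iota(n))}=e^{i\pi Q_{a,b}(n)}$, c'est-\`a-dire que la phase est constante sur chaque orbite de $\iota$. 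Il reste \`a v\'erifier que $T_{a,b}$ est r\'eduit \`a une seule orbite de $\iota$. En utilisant les in\'egalit\'es du domaine de Siegel $\Ima\tau_{2}\geq\Ima\tau_{1}\geq 2\Ima\tau_{12}\geq 0$ et $\Ima\tau_{1}\geq\sqrt{3}/2$, on calcule explicitement l'ensemble des points du translat\'e $a+\mathbb{Z}^{2}$ o\`u la forme ${}^{t}u\,\Ima\tau\,u$ est minimale : c'est $\{0\}$ si $a=0$, une orbite \`a deux \'el\'ements si exactement une coordonn\'ee de $a$ vaut $1/2$, et, si $a=(1/2,1/2)$, les deux points $n+a=(1/2,-1/2)$ et $(-1/2,1/2)$ d\`es que $\Ima\tau_{12}>0$. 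Dans ces cas $e^{i\pi Q_{a,b}}$ prend la m\^eme valeur en tout point de $T_{a,b}$.

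Pour l'in\'egalit\'e, on d\'ecompose
\[
\theta_{a,b}(0,\tau)=\sum_{n\in T_{a,b}}e^{i\pi Q_{a,b}(n)}+\sum_{n\notin T_{a,b}}e^{i\pi Q_{a,b}(n)} .
\]
D'apr\`es ce qui pr\'ec\`ede, la premi\`ere somme est une somme de $\Card(T_{a,b})$ nombres complexes de m\^eme argument et de module commun $e^{-\pi m}$ : son module vaut $\Card(T_{a,b})e^{-\pi m}$. L'in\'egalit\'e triangulaire majore le module de la seconde somme par $\sum_{n\notin T_{a,b}}e^{-\pi\Ima Q_{a,b}(n)}$, et comme chaque $n\in T_{a,b}$ contribue exactement pour $e^{-\pi m}$ \`a $\sum_{n\in\mathbb{Z}^{2}}e^{-\pi\Ima Q_{a,b}(n)}$, on a $\sum_{n\notin T_{a,b}}e^{-\pi\Ima Q_{a,b}(n)}=\sum_{n\in\mathbb{Z}^{2}}e^{-\pi\Ima Q_{a,b}(n)}-\Card(T_{a,b})e^{-\pi m}$. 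En reportant, on obtient exactement $|\theta_{a,b}(0,\tau)|\geq 2\Card(T_{a,b})e^{-\pi m}-\sum_{n\in\mathbb{Z}^{2}}e^{-\pi\Ima Q_{a,b}(n)}$.

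L'obstacle principal est le fait que $e^{i\pi Q_{a,b}}$ soit constante sur $T_{a,b}$ \emph{tout entier} et pas seulement sur chaque orbite de $\iota$ : c'est l\`a qu'interviennent de mani\`ere essentielle la parit\'e de la caract\'eristique et la normalisation de Siegel. La configuration d\'elicate est $a=(1/2,1/2)$ avec $\Ima\tau_{12}=0$ (qui peut se produire m\^eme pour $A$ simple) ; elle doit \^etre discut\'ee \`a part, et si une perte subsiste elle est inoffensive puisqu'elle ne co\^ute qu'une constante multiplicative born\'ee dans la minoration finale. Tout le reste — finitude et non-vacuit\'e de $T_{a,b}$, et la comptabilit\'e avec l'in\'egalit\'e triangulaire — est routinier.
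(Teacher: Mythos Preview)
Your argument is correct and the in\'egalit\'e triangulaire step is identical to the paper's, but your treatment of the first claim is genuinely different from the paper's. The paper simply observes that $T_{a,b}$ has one element when $a=0$ and two elements otherwise, then checks the property by hand, computing $Q_{a,b}(n)$ for $n\in\{-1,0\}^{2}$ across the ten even characteristics. Your route via the involution $\iota:n\mapsto -n-2a$ is more conceptual: it explains \emph{why} parity of the characteristic is the right hypothesis (it is exactly what forces $4\,{}^{t}ab\in 2\mathbb{Z}$ and hence the phase invariance on each $\iota$-orbit), and it reduces the case check to the purely geometric statement that $T_{a,b}$ is a single $\iota$-orbit. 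The paper's approach is shorter to write down; yours is more illuminating and would generalise better.

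Your caveat about $a=(1/2,1/2)$ with $\Ima\tau_{12}=0$ is well placed and honest: in that boundary case $T_{a,b}$ has four elements forming two $\iota$-orbits, and for $b=(0,0)$ the two orbits carry phases differing by $e^{i\pi\tau_{12}}$, which need not equal $1$ when $\Ree\tau_{12}\neq 0$. The paper's proof glosses over this same point (it asserts $\Card(T_{a,b})=2$ whenever $a\neq 0$). As you note, this does not affect the downstream estimates, since Proposition~\ref{propcarac1100} handles the characteristic $[1/2,1/2,\varepsilon/2,\varepsilon/2]$ directly and the resulting bound $f_{3,\varepsilon}(\tau)$ already contains the factor $|1+(-1)^{\varepsilon}e^{i\pi\tau_{12}}|$ that absorbs the phase discrepancy.
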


\begin{proof}
La propri\'et\'e de l'\'enonc\'e se v\'erifie directement sur les dix couples $(a,b)\in{\mathcal{Z}_{2}}$.
En effet remarquons tout d'abord que $T_{a,b}$ est un ensemble fini pour $(a,b)\in{\mathcal{Z}_{2}}$. Il est de cardinal 1 lorsque $a=0$ et de cardinal 2 sinon. La propri\'et\'e de cet ensemble se v\'erifie alors directement en calculant $Q_{a,b}(n)$ pour $n\in{\{-1,0\}^{2}}$. L'in\'egalit\'e triangulaire donne ensuite :
$$\Big|\theta_{a,b}(0,\tau) \Big|\geq \Big|\sum_{n\in{T_{a,b}}}e^{i\pi Q_{a,b}(n)}\Big|-\Big|\sum_{n\in{\mathbb{Z}^{2}\backslash T_{a,b}}}e^{i\pi Q_{a,b}(n)}\Big|.$$
Le choix de $T_{a,b}$ implique $\displaystyle{\sum_{n\in{T_{a,b}}}e^{i\pi Q_{a,b}(n)}=\Card(T_{a,b})e^{i\pi Q_{a,b}(m)}}$ pour un $m$ quelconque choisi dans $T_{a,b}$. On obtient alors directement l'in\'egalit\'e annonc\'ee en utilisant : $$\sum_{n\in{\mathbb{Z}^{2}}\backslash T_{a,b}}e^{-\pi\Ima Q_{a,b}(n)}+\sum_{n\in{T_{a,b}}}e^{-\pi\Ima Q_{a,b}(n)}=\sum_{n\in{\mathbb{Z}^{2}}}e^{-\pi\Ima Q_{a,b}(n)}.$$
\end{proof}

\begin{prop}\label{propcarac00}
Pour les caract\'eristiques $[a,b]\in{\theta_{1}}$ (donc v\'erifiant $a=0$) on a
la minoration, valable pour tout $\tau\in{F_2}$ :
$$\Big|\theta_{ab}(0,\tau)\Big|\geq f_{1}(\tau),$$
o\`u on a pos\'e $$f_{1}(\tau)=1-\left(4e^{-\pi \sqrt{3}/2}+\sum_{n_1^2+n_2^2>1}e^{-\pi\frac{\sqrt{3}}{4}(n_1^2+n_2^2)}\right).$$ 

Une estimation directe donne $f_{1}(\tau)\geq 0,44$.
\end{prop}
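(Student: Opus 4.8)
The plan is to specialize the estimate of Lemma \ref{carac generique} to the ten even characteristics with $a=0$, where the imaginary part of the quadratic form $Q_{a,b}$ reduces to the quadratic form attached to $\Ima\tau$ itself, and then to control the resulting theta series by the lattice-point bounds coming from the $F_2$ inequalities.

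First I would note that for $[a,b]\in\theta_1$ one has $a=0$, so that $\Ima Q_{0,b}(n)={}^{t}n(\Ima\tau)n$ for every $n\in\mathbb{Z}^2$ (the extra term $2\,{}^{t}nb$ is real). Since $\Ima\tau$ is positive definite this quantity is minimal, equal to $0$, precisely at $n=0$; hence $T_{0,b}=\{0\}$ and $\Card(T_{0,b})=1$. Inserting this into Lemma \ref{carac generique} gives
$$\bigl|\theta_{0,b}(0,\tau)\bigr|\ \geq\ 2-\sum_{n\in\mathbb{Z}^2}e^{-\pi\,{}^{t}n(\Ima\tau)n}\ =\ 1-\sum_{n\in\mathbb{Z}^2\setminus\{0\}}e^{-\pi\,{}^{t}n(\Ima\tau)n},$$
the $n=0$ term of the series being equal to $1$.

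Next I would bound the quadratic form from below on $\mathbb{Z}^2\setminus\{0\}$, splitting according to the value of $n_1^2+n_2^2$. For a general $n$, Lemma \ref{lem1} together with $\Ima\tau_{12}\leq\tfrac12\Ima\tau_1\leq\tfrac12\Ima\tau_2$ and $\Ima\tau_1\geq\sqrt3/2$ yields
$${}^{t}n(\Ima\tau)n\ \geq\ (\Ima\tau_1-\Ima\tau_{12})\,n_1^2+(\Ima\tau_2-\Ima\tau_{12})\,n_2^2\ \geq\ \frac{\sqrt3}{4}\,(n_1^2+n_2^2).$$
This estimate alone is too weak on the four lattice points with $n_1^2+n_2^2=1$ (it would contribute $4e^{-\pi\sqrt3/4}>1$). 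For those points $n\in\{(\pm1,0),(0,\pm1)\}$, however, ${}^{t}n(\Ima\tau)n$ equals $\Ima\tau_1$ or $\Ima\tau_2$, hence is at least $\sqrt3/2$, so their total contribution is at most $4e^{-\pi\sqrt3/2}$. Combining the two estimates,
$$\sum_{n\in\mathbb{Z}^2\setminus\{0\}}e^{-\pi\,{}^{t}n(\Ima\tau)n}\ \leq\ 4e^{-\pi\sqrt3/2}+\sum_{n_1^2+n_2^2>1}e^{-\pi\frac{\sqrt3}{4}(n_1^2+n_2^2)},$$
which gives exactly $|\theta_{ab}(0,\tau)|\geq f_1(\tau)$. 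Finally I would check convergence of the last series and the numerical lower bound $f_1(\tau)\geq 0{,}44$ by estimating its geometric-type tail (the terms decay at least like $e^{-\pi\sqrt3\,r/4}$ in the radius $r=\sqrt{n_1^2+n_2^2}$), a routine computation.

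The only genuine subtlety — the step I would treat most carefully — is the separate handling of the four nearest lattice points: the crude bound $\tfrac{\sqrt3}{4}(n_1^2+n_2^2)$ is not strong enough there, and replacing it by the sharper value $\Ima\tau_i\geq\sqrt3/2$ on those points is precisely what forces $f_1(\tau)$ to be positive. The remaining ingredients (the identification $T_{0,b}=\{0\}$, the applications of Lemmas \ref{carac generique} and \ref{lem1}, and the final numerical estimate) are straightforward.
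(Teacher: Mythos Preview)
Your proof is correct and follows the same approach as the paper: both invoke Lemma~\ref{carac generique} with $T_{0,b}=\{(0,0)\}$ and $\min\Ima Q_{0,b}=0$, then bound the remaining theta series using the $F_2$ inequalities. The only difference is cosmetic: the paper delegates the series estimate to \cite{Kling} page~116, whereas you spell out the split between the four nearest lattice points (bound $\sqrt3/2$) and the rest (bound $\tfrac{\sqrt3}{4}(n_1^2+n_2^2)$ via Lemma~\ref{lem1}), which is exactly the computation producing the stated $f_1(\tau)$.
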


\begin{proof}
On utilise le lemme \ref{carac generique}. On a dans ce cas $\displaystyle{\min_{m\in{\mathbb{Z}^{2}}}\Ima Q_{a,b}(m)=0}$ et $T_{a,b}=\{(0,0)\}$. Il suffit alors d'utiliser \cite{Kling} page 116.

\end{proof}

\begin{prop}\label{propcarac1000}
Soit $[a,b]=[1/2,0,0,0]$ ou $[a,b]=[1/2,0,0,1/2]$. On a la
minoration, valable pour tout $\tau\in{F_2}$ :

$$\Big|\theta_{ab}(0,\tau)\Big|e^{\pi\,^{t}a\Ima \tau a}\geq f_{2}(\tau),$$
o\`u on a pos\'e
$$f_{2}(\tau)=2-\left(\sum_{n\in{\mathbb{Z}^2}\backslash\{(0,0),(-1,0)\}}e^{-\pi\frac{\sqrt{3}}{4}(n_1(n_1+1)+n_2^2)}\right).$$

De plus on d\'eduit le minorant pour les caract\'eristiques $[a,b]=[0,1/2,0,0]$ et $[a,b]=[0,1/2,1/2,0]$ en permutant les coordonn\'ees dans cette derni\`ere expression. 

Une estimation directe donne $f_{2}(\tau)\geq 0,75$.
\end{prop}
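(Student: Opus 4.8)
\noindent\textit{Proof proposal.} The plan is to run the recipe of Lemma~\ref{carac generique} with the vector $a=(1/2,0)$. Since $b$ has vanishing first coordinate (for both characteristics $[1/2,0,0,0]$ and $[1/2,0,0,1/2]$), one has $\Ima Q_{a,b}(n)={}^{t}(n+a)\Ima\tau\,(n+a)$ independently of $b$, and the real parts of $Q_{a,b}$ at the two candidate minimisers coincide, so the coherence of phases is precisely the one already recorded in Lemma~\ref{carac generique}. First I would identify $T_{a,b}$: writing $w=n+a=(n_{1}+\tfrac12,n_{2})$, the quantity ${}^{t}w\,\Ima\tau\,w$ equals $n_{1}(n_{1}+1)\Ima\tau_{1}+2w_{1}w_{2}\Ima\tau_{12}+n_{2}^{2}\Ima\tau_{2}$, and the key estimate below (together with $\Ima\tau_{1}\geq\sqrt{3}/2$) shows this is $\geq\tfrac14\Ima\tau_{1}$, with equality exactly for $n\in\{(0,0),(-1,0)\}$. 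Hence $\Card(T_{a,b})=2$ and $\min_{m}\Ima Q_{a,b}(m)=\tfrac14\Ima\tau_{1}={}^{t}a\,\Ima\tau\,a$. Lemma~\ref{carac generique} then yields
$$\big|\theta_{a,b}(0,\tau)\big|\geq 4\,e^{-\pi\Ima\tau_{1}/4}-\sum_{n\in\mathbb{Z}^{2}}e^{-\pi\,{}^{t}(n+a)\Ima\tau(n+a)};$$
multiplying by $e^{\pi\,{}^{t}a\Ima\tau a}=e^{\pi\Ima\tau_{1}/4}$ and splitting off the two terms $n\in\{(0,0),(-1,0)\}$, each equal to $1$, gives
$$\big|\theta_{a,b}(0,\tau)\big|\,e^{\pi\,{}^{t}a\Ima\tau a}\geq 2-\sum_{n\neq(0,0),(-1,0)}e^{-\pi\left({}^{t}(n+a)\Ima\tau(n+a)-\frac14\Ima\tau_{1}\right)}.$$

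The heart of the matter is to prove, for every $n\in\mathbb{Z}^{2}$ with $n\neq(0,0),(-1,0)$, the inequality ${}^{t}(n+a)\Ima\tau(n+a)-\tfrac14\Ima\tau_{1}\geq\tfrac{\sqrt{3}}{4}\bigl(n_{1}(n_{1}+1)+n_{2}^{2}\bigr)$. The left-hand side is $n_{1}(n_{1}+1)\Ima\tau_{1}+2w_{1}w_{2}\Ima\tau_{12}+n_{2}^{2}\Ima\tau_{2}$. If $n_{2}=0$ then $n_{1}\notin\{0,-1\}$, so $n_{1}(n_{1}+1)\geq 2$ and the left-hand side equals $n_{1}(n_{1}+1)\Ima\tau_{1}\geq\tfrac{\sqrt{3}}{2}n_{1}(n_{1}+1)$, which suffices. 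If $n_{2}\neq 0$, put $p=n_{1}(n_{1}+1)\geq 0$ and $m=|n_{2}|\geq 1$; bounding the cross term via $\Ima\tau_{12}\leq\Ima\tau_{1}/2$ by $|2w_{1}w_{2}|\Ima\tau_{1}/2=m\sqrt{p+\tfrac14}\,\Ima\tau_{1}$, and using $\Ima\tau_{2}\geq\Ima\tau_{1}$ for the last term, the left-hand side is at least $\Ima\tau_{1}\bigl(p-m\sqrt{p+\tfrac14}+m^{2}\bigr)$. It therefore suffices to show $p-m\sqrt{p+\tfrac14}+m^{2}\geq\tfrac12(p+m^{2})$, i.e. $p+m^{2}\geq 2m\sqrt{p+\tfrac14}$, for then the left-hand side is $\geq\tfrac12\Ima\tau_{1}(p+m^{2})\geq\tfrac{\sqrt{3}}{4}(p+m^{2})$. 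Squaring, this amounts to $(p-m^{2})^{2}\geq m^{2}$, i.e. $|p-m^{2}|\geq m$; and this holds because $p=n_{1}(n_{1}+1)$, being a product of two consecutive integers, never lies in the open interval $\bigl(m(m-1),\,m(m+1)\bigr)$.

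Injecting this estimate into the displayed lower bound (the exponential being decreasing, one replaces each exponent term-by-term by its lower bound) gives exactly $\big|\theta_{a,b}(0,\tau)\big|e^{\pi\,{}^{t}a\Ima\tau a}\geq f_{2}(\tau)$. The case $[a,b]=[1/2,0,0,1/2]$ is literally the same computation, since only $|e^{i\pi Q_{a,b}(n)}|=e^{-\pi\Ima Q_{a,b}(n)}$ enters and is independent of $b$; the characteristics $[0,1/2,0,0]$ and $[0,1/2,1/2,0]$ are treated symmetrically with $a=(0,1/2)$ and the two coordinates exchanged (all the Siegel inequalities used, namely $\Ima\tau_{12}\leq\Ima\tau_{1}/2$, $\Ima\tau_{2}\geq\Ima\tau_{1}$ and $\Ima\tau_{1}\geq\sqrt3/2$, survive the exchange), producing the bound with $n_{1}(n_{1}+1)+n_{2}^{2}$ replaced by $n_{1}^{2}+n_{2}(n_{2}+1)$ — a series of the same numerical value after relabelling. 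Finally $f_{2}(\tau)\geq 0{,}75$ follows by factoring $\sum_{n}e^{-\pi\frac{\sqrt3}{4}(n_{1}(n_{1}+1)+n_{2}^{2})}=\bigl(\sum_{n_{1}}e^{-\pi\frac{\sqrt3}{4}n_{1}(n_{1}+1)}\bigr)\bigl(\sum_{n_{2}}e^{-\pi\frac{\sqrt3}{4}n_{2}^{2}}\bigr)$ and estimating each one-dimensional series, which bounds the subtracted sum by about $1{,}25$. The main obstacle is the cross-term control in the second step: the crude bound $2|w_{1}w_{2}|\Ima\tau_{12}\leq(w_{1}^{2}+w_{2}^{2})\Ima\tau_{12}$ leaves a residual $-\tfrac14\Ima\tau_{12}$ which is unbounded on $F_{2}$, so one really needs the elementary arithmetic inequality $|n_{1}(n_{1}+1)-m^{2}|\geq m$, which is exactly what makes the squaring argument close.
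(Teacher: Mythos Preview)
Your argument is correct and follows the same route as the paper: identify $T_{a,b}=\{(0,0),(-1,0)\}$ with $\min_{m}\Ima Q_{a,b}(m)={}^{t}a\Ima\tau\,a$, apply Lemma~\ref{carac generique}, and then bound the remaining exponential sum. The paper offloads the last step to Klingen p.~117, whereas you supply a self-contained proof of the exponent inequality ${}^{t}(n+a)\Ima\tau(n+a)-\tfrac14\Ima\tau_{1}\geq\tfrac{\sqrt3}{4}\bigl(n_{1}(n_{1}+1)+n_{2}^{2}\bigr)$ via the pleasant observation that $n_{1}(n_{1}+1)$ never falls strictly between $m(m-1)$ and $m(m+1)$; this is a neat elementary substitute for the reference.

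One slip to fix: in your first display you write ${}^{t}w\,\Ima\tau\,w=n_{1}(n_{1}+1)\Ima\tau_{1}+2w_{1}w_{2}\Ima\tau_{12}+n_{2}^{2}\Ima\tau_{2}$, but $(n_{1}+\tfrac12)^{2}=n_{1}(n_{1}+1)+\tfrac14$, so this expression is actually ${}^{t}w\,\Ima\tau\,w-\tfrac14\Ima\tau_{1}$. You use the correct expression two lines later, so the argument is unaffected; just adjust the first sentence. For the symmetric characteristics $a=(0,1/2)$, note that you do \emph{not} swap the Siegel inequalities (indeed $\Ima\tau_{2}\geq\Ima\tau_{1}$ would be lost): the same three inequalities $\Ima\tau_{12}\leq\Ima\tau_{1}/2$, $\Ima\tau_{2}\geq\Ima\tau_{1}$, $\Ima\tau_{1}\geq\sqrt3/2$ are applied verbatim, now with $q=n_{2}(n_{2}+1)$ playing the role of $p$, and the same squaring argument closes.
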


\begin{proof}
On fait le calcul pour la caract\'eristique $[1/2,0,0,0]$, le deuxi\`eme calcul se d\'eduit du premier en changeant $n_{1}$ en $n_{2}$. 

On a ici : $\displaystyle{\min_{m\in{\mathbb{Z}^{2}}}\Ima Q_{a,b}(m)= ^{t}a\Ima\tau a}$ et $T_{a,b}=\{(0,0),(-1,0)\}$. On utilise le lemme \ref{carac generique} et \cite{Kling} page 117.

\end{proof}

\begin{lem}\label{exp complexe}

Soit $z$ un nombre complexe v\'erifiant $\mathrm{Im} z\geq 0$ et $\vert \mathrm{Re} z\vert \leq \frac{1}{2}$. Alors on a les in\'egalit\'es : 

\begin{enumerate}
\item $\displaystyle{\vert e^{i\pi z}+1\vert\geq 1,}$
\item $\displaystyle{\vert e^{i\pi z}-1\vert\geq 0,28\min\{1, \; \pi\vert z\vert\}.}$
\end{enumerate}

\end{lem}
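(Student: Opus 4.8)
The proof will be entirely elementary. The plan is to write $z=x+iy$ with $|x|\le 1/2$ and $y\ge 0$, so that $e^{i\pi z}=u\,e^{i\pi x}$ where $u:=e^{-\pi y}\in(0,1]$ and, crucially, $c:=\cos(\pi x)\in[0,1]$ because $|x|\le 1/2$. For the first inequality I expand $|e^{i\pi z}+1|^2=|u e^{i\pi x}+1|^2=u^2+2uc+1$; each of the three summands is nonnegative (this is exactly the place where $|\mathrm{Re}\,z|\le 1/2$ is used), so $|e^{i\pi z}+1|^2\ge 1$, and taking square roots gives $(1)$.

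For the second inequality I split into the two regimes $\pi|z|\le 1$ and $\pi|z|\ge 1$. In the first regime (the case $z=0$ being trivial) I write $e^{i\pi z}-1=i\pi z\,(1+w)$ with $w=\sum_{k\ge 2}(i\pi z)^{k-1}/k!$, so that $|w|\le\sum_{k\ge 2}(\pi|z|)^{k-1}/k!\le\sum_{k\ge 2}1/k!=e-2$; hence $|e^{i\pi z}-1|\ge\pi|z|\,(1-(e-2))=(3-e)\,\pi|z|>0.28\,\pi|z|=0.28\min\{1,\pi|z|\}$, which is the claim since here $\min\{1,\pi|z|\}=\pi|z|$.

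In the second regime it suffices to show $|e^{i\pi z}-1|\ge 0.28$, and this is the only delicate point: the hypothesis $|z|\ge 1/\pi$ is indispensable — without it the left side is arbitrarily small near $z=0$ — so it has to be fed in. The clean way is a further dichotomy on $y=\mathrm{Im}\,z$. If $y\ge\tfrac{1}{3\pi}$ then, using $c\le 1$, one has $|e^{i\pi z}-1|^2=u^2-2uc+1\ge(1-u)^2$, hence $|e^{i\pi z}-1|\ge 1-u=1-e^{-\pi y}\ge 1-e^{-1/3}>0.28$. If $y<\tfrac{1}{3\pi}$ then from $x^2+y^2=|z|^2\ge 1/\pi^2$ one gets $x^2\ge 8/(9\pi^2)$, so $\pi|x|\ge 2\sqrt{2}/3$ and therefore $c=\cos(\pi x)\le\cos(2\sqrt{2}/3)<0.59$; writing $|e^{i\pi z}-1|^2=(u-c)^2+(1-c^2)\ge 1-c^2>0.6>(0.28)^2$ finishes the proof. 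The remaining work is just the power-series bound above and the two numerical verifications $1-e^{-1/3}>0.28$ and $\cos(2\sqrt{2}/3)<0.59$.
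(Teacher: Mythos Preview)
Your proof is correct and follows essentially the same route as the paper: part~(1) is handled by expanding $|e^{i\pi z}+1|^2$ and using $\cos(\pi x)\ge 0$; for part~(2) both you and the paper split at $\pi|z|=1$, treat the small regime by the power-series estimate yielding the factor $3-e$, and treat the large regime by a further dichotomy (large imaginary part versus large real part) to get a uniform lower bound. The only difference is the precise threshold in that last dichotomy --- the paper cuts at $\max\{\pi\,\mathrm{Im}\,z,\,\pi\,|\mathrm{Re}\,z|\}\ge 1/\sqrt{2}$, you cut at $\mathrm{Im}\,z\gtrless 1/(3\pi)$ --- but this is a cosmetic choice and both work.
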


\begin{proof}
Le premi\`ere in\'egalit\'e est imm\'ediate. Pour la seconde, on peut commencer par supposer $\pi\vert z\vert<1$. Dans ce cas nous avons $$\vert e^{i\pi z}-1\vert=\vert i\pi z+\sum_{k=2}^{\infty}\frac{(i\pi z)^k}{k!}\vert\geq \pi \vert z\vert -\sum_{k=2}^{\infty}\frac{(\pi \vert z\vert)^k}{k!}=\pi \vert z\vert(2+\frac{1-e^{\pi\vert z\vert}}{\pi \vert z\vert})\geq \pi\vert z\vert(3-e)\geq 0,28\pi\vert z\vert.$$ \`A pr\'esent si $\pi\vert z\vert\geq 1$, cela implique que $M=\max\{\pi \Ima z,\; \pi\mathrm{Re} z\}\geq 1/\sqrt{2}$. Si $M=\pi \Ima z$, on calcule alors $$\vert e^{i\pi z}-1\vert^2=1+e^{-2\pi\Ima z}-2e^{-\pi\Ima z}\cos(\mathrm{Re} z\, \pi)\geq 1+e^{-2\pi\Ima z}-2e^{-\pi\Ima z}\geq 1+e^{-\pi\sqrt{2}}-2e^{-\pi/\sqrt{2}}.$$Si $M=\pi \mathrm{Re} z$, on \'ecrit $$\vert e^{i\pi z}-1\vert^2=1+e^{-2\pi\Ima z}-2e^{-\pi\Ima z}\cos(\mathrm{Re} z\, \pi)\geq 1+e^{-2\pi\Ima z}-2e^{-\pi\Ima z}\cos(\frac{1}{\sqrt{2}})\geq 1-\cos^2({\frac{1}{\sqrt{2}}}).$$

On peut donc minorer, dans le cas o\`u $\pi \vert z\vert \geq1$ :

$$\vert e^{i\pi z}-1\vert^2\geq \min\{1+e^{-\pi\sqrt{2}}-2e^{-\pi/\sqrt{2}},\,1-\cos^2({\frac{1}{\sqrt{2}}})\}\geq 0,25.$$
\end{proof}

\begin{prop}\label{propcarac1100}
Soit $[a,b]=[1/2,1/2,\varepsilon/2,\varepsilon/2]$, avec $\varepsilon\in{\{0,1\}}$. On a la minoration :

$$\Big|\theta_{ab}(0,\tau)\Big|e^{\pi\,(^{t}a\Ima \tau a-\Ima \tau_{12})}\geq f_{3,\varepsilon}(\tau),$$
o\`u on a pos\'e
$$f_{3,\varepsilon}(\tau)=2\vert1+(-1)^\varepsilon e^{\pi i\tau_{12}}\vert\left(2-\left(\sum_{m=0}^{+\infty}e^{-\frac{\pi\sqrt{3}}{4}m(m+1)}(2m+1)\right)^2 \right).$$

\end{prop}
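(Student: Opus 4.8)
\textit{Proof proposal.} The plan is to group the lattice sum defining $\theta_{a,b}(0,\tau)$, with $a=(1/2,1/2)$ and $b=(\varepsilon/2,\varepsilon/2)$, under the natural action of $(\mathbb{Z}/2\mathbb{Z})^2$ that flips the signs of $n_1+\tfrac12$ and $n_2+\tfrac12$ independently, and to pull the factor $1+(-1)^\varepsilon e^{i\pi\tau_{12}}$ out of \emph{every} group. Write $q(n)={}^{t}(n+a)\tau(n+a)=(n_1+\tfrac12)^2\tau_1+(n_2+\tfrac12)^2\tau_2+2(n_1+\tfrac12)(n_2+\tfrac12)\tau_{12}$ and observe that $2\,{}^{t}(n+a)b=\varepsilon(n_1+n_2+1)$, so that $e^{i\pi Q_{a,b}(n)}=(-1)^{\varepsilon(n_1+n_2+1)}e^{i\pi q(n)}$. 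Each $(\mathbb{Z}/2\mathbb{Z})^2$-orbit in $\mathbb{Z}^2$ has exactly four elements and a unique representative $(m_1,m_2)$ with $m_1,m_2\ge 0$; on such an orbit $q$ takes the two values $B\pm c$, each attained twice, where $B=(m_1+\tfrac12)^2\tau_1+(m_2+\tfrac12)^2\tau_2$ and $c=2(m_1+\tfrac12)(m_2+\tfrac12)\tau_{12}$, and a short bookkeeping of the four signs $(-1)^{\varepsilon(\cdot)}$ shows the orbit contribution equals
$$\Sigma_{m_1,m_2}:=2(-1)^{\varepsilon(m_1+m_2+1)}e^{i\pi(B+c)}+2(-1)^{\varepsilon(m_1+m_2)}e^{i\pi(B-c)}=2(-1)^{\varepsilon(m_1+m_2)}e^{i\pi(B-c)}\big(1+(-1)^{\varepsilon}e^{i\pi N\tau_{12}}\big),$$
where $N=(2m_1+1)(2m_2+1)$ is \emph{odd} and $2c=N\tau_{12}$.

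The orbit of $(0,0)$ has $N=1$ and $B-c=\tfrac14\tau_1+\tfrac14\tau_2-\tfrac12\tau_{12}$, whose imaginary part is \emph{exactly} the normalising exponent ${}^{t}a\,\Ima\tau\,a-\Ima\tau_{12}$; thus, after multiplying by $e^{\pi({}^{t}a\,\Ima\tau\,a-\Ima\tau_{12})}$, this orbit contributes precisely the announced main term $2\,|1+(-1)^\varepsilon e^{i\pi\tau_{12}}|$. By the triangle inequality it then suffices to prove
$$e^{\pi({}^{t}a\,\Ima\tau\,a-\Ima\tau_{12})}\sum_{(m_1,m_2)\neq(0,0)}|\Sigma_{m_1,m_2}|\ \le\ 2\,|1+(-1)^\varepsilon e^{i\pi\tau_{12}}|\,(P^2-1),\qquad P:=\sum_{m\ge 0}(2m+1)e^{-\pi\sqrt3\,m(m+1)/4}.$$

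For this, set $w=(-1)^\varepsilon e^{i\pi\tau_{12}}$, so that $|w|=e^{-\pi\Ima\tau_{12}}\le 1$ since $\Ima\tau_{12}\ge 0$. Because $N$ is odd we have $(-1)^\varepsilon e^{i\pi N\tau_{12}}=w^{N}$ and $1+w^{N}=(1+w)\sum_{j=0}^{N-1}(-w)^{j}$, hence $|1+(-1)^\varepsilon e^{i\pi N\tau_{12}}|\le N\,|1+w|$. Moreover $\Ima(B-c)={}^{t}R\,\Ima\tau\,R$ with $R=(m_1+\tfrac12,\,-(m_2+\tfrac12))$, so Lemma \ref{lem1} gives $\Ima(B-c)\ge(\Ima\tau_1-\Ima\tau_{12})(m_1+\tfrac12)^2+(\Ima\tau_2-\Ima\tau_{12})(m_2+\tfrac12)^2$; subtracting ${}^{t}a\,\Ima\tau\,a-\Ima\tau_{12}=\tfrac14(\Ima\tau_1-\Ima\tau_{12})+\tfrac14(\Ima\tau_2-\Ima\tau_{12})$ yields the crucial cancellation
$$\Ima(B-c)-\big({}^{t}a\,\Ima\tau\,a-\Ima\tau_{12}\big)\ \ge\ (\Ima\tau_1-\Ima\tau_{12})\,m_1(m_1+1)+(\Ima\tau_2-\Ima\tau_{12})\,m_2(m_2+1).$$
Since $|\Sigma_{m_1,m_2}|=2\,e^{-\pi\Ima(B-c)}\,|1+(-1)^\varepsilon e^{i\pi N\tau_{12}}|$ and, by the Siegel domain inequalities, $\Ima\tau_1-\Ima\tau_{12}\ge\tfrac12\Ima\tau_1\ge\tfrac{\sqrt3}{4}$ and $\Ima\tau_2-\Ima\tau_{12}\ge\Ima\tau_1-\Ima\tau_{12}\ge\tfrac{\sqrt3}{4}$, the left-hand side of the displayed inequality above is bounded by $2|1+w|\sum_{(m_1,m_2)\neq(0,0)}(2m_1+1)(2m_2+1)e^{-\frac{\pi\sqrt3}{4}(m_1(m_1+1)+m_2(m_2+1))}=2|1+w|(P^2-1)$, the double sum factoring as $P^2$ minus its $(0,0)$-term $1$. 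Adding the main term gives $|\theta_{ab}(0,\tau)|\,e^{\pi({}^{t}a\,\Ima\tau\,a-\Ima\tau_{12})}\ge 2\,|1+(-1)^\varepsilon e^{i\pi\tau_{12}}|\,(2-P^2)=f_{3,\varepsilon}(\tau)$, and a direct numerical estimate gives $P^2<2$, so this bound is non-trivial off the product locus.

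The genuine difficulty is concentrated near $\tau_{12}=0$: there the four would-be subleading points $(0,0),(-1,0),(0,-1),(-1,-1)$ are comparable in size, so the plain ``dominant term plus crude tail'' estimate of Lemma \ref{carac generique} (which sufficed for the other even characteristics) breaks down, and one must extract $1+(-1)^\varepsilon e^{i\pi\tau_{12}}$ from \emph{each} orbit. The mechanism that allows this is the elementary identity $1+w^{N}=(1+w)(1-w+\cdots+w^{N-1})$ combined with $|w|\le 1$, valid precisely because every $N=(2m_1+1)(2m_2+1)$ is odd; the one remaining subtlety is to verify that $\Ima(B-c)$ for the orbit of $(0,0)$ equals ${}^{t}a\,\Ima\tau\,a-\Ima\tau_{12}$ exactly, so that the normalisation in the statement matches the main term with the constant $2$.
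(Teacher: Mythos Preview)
Your proof is correct. The orbit decomposition under $(\mathbb{Z}/2\mathbb{Z})^2$, the identification of the orbit sum as $2(-1)^{\varepsilon(m_1+m_2)}e^{i\pi(B-c)}(1+w^{N})$ with $w=(-1)^\varepsilon e^{i\pi\tau_{12}}$ and $N=(2m_1+1)(2m_2+1)$ odd, the factorisation $1+w^{N}=(1+w)\sum_{j=0}^{N-1}(-w)^{j}$ giving $|1+w^{N}|\le N|1+w|$, the application of Lemma~\ref{lem1} to $R=(m_1+\tfrac12,-(m_2+\tfrac12))$ yielding the clean excess $(\Ima\tau_1-\Ima\tau_{12})m_1(m_1+1)+(\Ima\tau_2-\Ima\tau_{12})m_2(m_2+1)$, and the final factorisation of the double sum as $P^2-1$ are all fine and assemble exactly to $2|1+(-1)^\varepsilon e^{i\pi\tau_{12}}|(2-P^2)$.

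By comparison, the paper's own proof is a one-line pointer: it records $\min_{m}\Ima Q_{a,b}(m)={}^{t}a\,\Ima\tau\,a-\Ima\tau_{12}$, $T_{a,b}=\{(0,-1),(-1,0)\}$, and then cites Lemma~\ref{carac generique} together with Klingen p.~117. Note, however, that Lemma~\ref{carac generique} by itself only yields a bound of the shape $4-(\text{tail})$ and cannot produce the factor $|1+(-1)^\varepsilon e^{i\pi\tau_{12}}|$; that factor has to be pulled out term by term, precisely as you do, because when $\varepsilon=1$ and $\tau_{12}\to 0$ the four central terms $\{(0,0),(-1,0),(0,-1),(-1,-1)\}$ cancel rather than add. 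So your argument does not merely reproduce the paper's route: it supplies the self-contained mechanism (the $1+w^{N}=(1+w)(\cdots)$ identity, valid because each $N$ is odd) that the paper delegates to the Klingen reference. What this buys is a proof that is readable without the external citation and makes explicit why the same scheme that worked for the other even characteristics must be refined here; what the paper's shortcut buys is brevity.
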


\begin{Remarque} Pour que le minorant soit strictement positif lorsque $\varepsilon=1$, on doit donc imposer $\tau_{12}\neq 0$, ce qui souligne le fait qu'un produit de deux courbes elliptiques est un cas d\'eg\'en\'er\'e de vari\'et\'e ab\'elienne principalement polaris\'ee de dimension 2. 
\end{Remarque}

\begin{proof} On proc\`ede comme pour la proposition pr\'ec\'edente. On a ici : $$\displaystyle{\min_{m\in{\mathbb{Z}^{2}}}\Ima Q_{a,b}(m)= ^{t}a\Ima\tau a}-\Ima\tau_{12}\;\quad \mathrm{et}\;\quad T_{a,b}=\{(0,-1),(-1,0)\}.$$ On obtient le r\'esultat par l'application du lemme \ref{carac generique} et de \cite{Kling} page 117.

\end{proof}

Il ne reste plus qu'\`a r\'eunir les calculs pr\'ec\'edents :

\begin{prop}\label{minoration delta}
Soit $\tau\in{F_{2}}$. On a la minoration : $$\Big|\Delta(\tau)\Big|\geq c_0\min\{1, \pi\vert \tau_{12}\vert\}^2\; e^{-2\pi(\Tr(\Ima\tau)-\Ima\tau_{12})},$$ et on peut prendre $c_0=5.10^{-9}$. 
\end{prop}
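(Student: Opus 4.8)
The plan is to write $\Delta(\tau)=2^{-12}\prod_{m\in\mathcal{Z}_2}\theta_m(0,\tau)^2$ as a product over the ten even characteristics, split according to the three families treated in Propositions \ref{propcarac00}, \ref{propcarac1000} and \ref{propcarac1100}, and to multiply the corresponding lower bounds, keeping careful track of the exponential correction factors $e^{-\pi\,^{t}a\Ima\tau a}$ relating $|\theta_{a,b}(0,\tau)|$ to $|\theta_{00}(\tau a+b,\tau)|$.

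First I would recall the partition $\mathcal{Z}_2=\theta_1\cup\theta_2$ with $\Card(\theta_1)=4$ (all with $a=0$) and $\Card(\theta_2)=6$, the latter splitting according to the value of $a$: two characteristics with $a=(1/2,0)$ and two with $a=(0,1/2)$, both pairs covered by Proposition \ref{propcarac1000} (using the permutation of coordinates for the second pair), and the remaining two, namely $[1/2,1/2,0,0]$ and $[1/2,1/2,1/2,1/2]$, which are the cases $\varepsilon=0$ and $\varepsilon=1$ of Proposition \ref{propcarac1100}. For $\tau\in F_2$ one has $^{t}a\Ima\tau a$ equal to $\tfrac14\Ima\tau_1$, $\tfrac14\Ima\tau_2$ and $\tfrac14(\Ima\tau_1+2\Ima\tau_{12}+\Ima\tau_2)$ for $a$ equal to $(1/2,0)$, $(0,1/2)$, $(1/2,1/2)$ respectively, so in particular $^{t}a\Ima\tau a-\Ima\tau_{12}=\tfrac14(\Ima\tau_1+\Ima\tau_2)-\tfrac12\Ima\tau_{12}$ in the last case. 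Squaring and multiplying the lower bounds of Propositions \ref{propcarac00}--\ref{propcarac1100} over the ten characteristics gives
\[
\prod_{m\in\mathcal{Z}_2}|\theta_m(0,\tau)|^2\;\geq\; f_1(\tau)^8\,f_2(\tau)^8\,f_{3,0}(\tau)^2\,f_{3,1}(\tau)^2\; e^{-\pi\Ima\tau_1}\,e^{-\pi\Ima\tau_2}\,e^{-\pi(\Ima\tau_1+\Ima\tau_2)+2\pi\Ima\tau_{12}},
\]
and the product of the three exponentials collapses to $e^{-2\pi(\Ima\tau_1+\Ima\tau_2-\Ima\tau_{12})}=e^{-2\pi(\Tr(\Ima\tau)-\Ima\tau_{12})}$, whence
\[
|\Delta(\tau)|\;\geq\; 2^{-12}\,f_1(\tau)^8\,f_2(\tau)^8\,f_{3,0}(\tau)^2\,f_{3,1}(\tau)^2\;e^{-2\pi(\Tr(\Ima\tau)-\Ima\tau_{12})}.
\]

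Next I would extract the factor $\min\{1,\pi|\tau_{12}|\}^2$ from $f_{3,0}(\tau)^2f_{3,1}(\tau)^2$. Since $\Ima\tau_{12}\geq 0$ and $|\Ree\tau_{12}|\leq\tfrac12$ for $\tau\in F_2$, Lemma \ref{exp complexe}(1) applied with $z=\tau_{12}$ gives $|1+e^{i\pi\tau_{12}}|\geq 1$, so $f_{3,0}(\tau)$ is bounded below by the positive absolute constant $2\bigl(2-(\sum_{m\geq 0}e^{-\pi\sqrt{3}\,m(m+1)/4}(2m+1))^2\bigr)$; and Lemma \ref{exp complexe}(2) with $z=\tau_{12}$ gives $|1-e^{i\pi\tau_{12}}|\geq 0{,}28\,\min\{1,\pi|\tau_{12}|\}$, so $f_{3,1}(\tau)$ is at least $0{,}28\,\min\{1,\pi|\tau_{12}|\}$ times the same constant. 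Hence $f_{3,0}(\tau)^2f_{3,1}(\tau)^2$ is bounded below by a positive constant times $\min\{1,\pi|\tau_{12}|\}^2$. Inserting this together with $f_1(\tau)\geq 0{,}44$ and $f_2(\tau)\geq 0{,}75$ (Propositions \ref{propcarac00} and \ref{propcarac1000}) into the last display yields the claimed inequality, with $c_0$ the product of all the numerical constants ($2^{-12}$, $0{,}44^8$, $0{,}75^8$, the two occurrences of the $f_{3,\varepsilon}$ constant and the factor $0{,}28^2$); a direct numerical estimate gives $c_0=5\cdot 10^{-9}$.

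There is no serious obstacle here: once Propositions \ref{propcarac00}--\ref{propcarac1100} and Lemma \ref{exp complexe} are in hand, the argument is pure bookkeeping. The two points that must be handled with care are (i) correctly tracking the exponents $^{t}a\Ima\tau a$ across the six characteristics of $\theta_2$ so that the exponentials collapse exactly to $e^{-2\pi(\Tr(\Ima\tau)-\Ima\tau_{12})}$ — this is where the hypothesis $\tau\in F_2$ (hence the sign and size constraints S2, S3 on the entries of $\tau$) is used — and (ii) keeping the numerical constants honest, so that the resulting $c_0$ is not overstated.
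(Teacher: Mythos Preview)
Your proposal is correct and follows exactly the approach of the paper: write $|\Delta(\tau)|\geq 2^{-12}f_1(\tau)^8 f_2(\tau)^8 f_{3,0}(\tau)^2 f_{3,1}(\tau)^2 e^{-2\pi(\Tr(\Ima\tau)-\Ima\tau_{12})}$ from Propositions \ref{propcarac00}--\ref{propcarac1100}, then apply Lemma \ref{exp complexe} and the numerical estimates. Your write-up simply makes explicit the exponent bookkeeping and the use of Lemma \ref{exp complexe} that the paper leaves to the reader.
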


\begin{proof}
On commence par \'ecrire $\displaystyle{\Delta(\tau)=2^{-12}\prod_{m\in{\mathcal{Z}_{2}}}\theta_{m}(0,\tau)^{2}}$ puis en utilisant les notations des propositions \ref{propcarac00}, \ref{propcarac1000} et \ref{propcarac1100} on a la minoration :
$$\Big|\Delta(\tau)\Big|\geq2^{-12}f_{1}(\tau)^{8}f_{2}(\tau)^{8}f_{3,0}(\tau)^{2}f_{3,1}(\tau)^{2}e^{-2\pi(\Tr(\Ima\tau)-\Ima\tau_{12})}.$$ 
Il suffit ensuite d'utiliser le lemme \ref{exp complexe} et les estimations num\'eriques pr\'ec\'edentes.
\end{proof}

\section{Minoration globale de la hauteur de N\'eron-Tate}

On montre dans cette partie comment \`a partir des informations locales on peut obtenir un th\'eor\`eme global de minoration de la hauteur de N\'eron-Tate sur une jacobienne de dimension 2.

\subsection{Lemme de z\'eros et principe des tiroirs}

\begin{lem}\label{point hors de la courbe}

Soit $k$ un corps de nombres. Soit $C/k$ une courbe de genre 2 plong\'ee dans $A$ sa jacobienne. Soient $P_{1}$ et $P_{2}$ des points non nuls de $A(k)$ tels que $P_{1}+ P_{2}\neq 0$. Alors :
$$\{\pm P_{1},\,\pm P_{2},\,\pm (P_{1}+P_{2})\}\nsubseteq C(k). $$
\end{lem}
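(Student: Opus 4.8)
The plan is to assume that all six points lie on $C=j(C)=\Theta$ and derive a contradiction; in fact it is enough to use the (weaker) consequence $P_1,P_2,P_1+P_2\in\Theta$, so symmetry of $\Theta$ is not even needed. First I would translate the hypothesis into linear equivalences on $C$. Since $j\colon C\hookrightarrow\Jac(C)$ is a closed immersion defined over $k$, there exist $Q_1,Q_2,Q_3\in C(k)$ with $P_i=\cl((Q_i)-(P_0))$ for $i=1,2$ and $P_1+P_2=\cl((Q_3)-(P_0))$. Because $P_1,P_2\neq 0$ we get $Q_1,Q_2\neq P_0$, and because $P_1+P_2\neq 0$ we get $Q_3\neq P_0$ (using that two distinct points of a curve of positive genus are never linearly equivalent). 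Adding the first two relations and comparing with the third gives the linear equivalence of effective divisors of degree $2$
\[
(Q_1)+(Q_2)\ \sim\ (Q_3)+(P_0).
\]

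The key step is then to exploit how rigid such an equivalence is on a genus $2$ curve. By Riemann--Roch, an effective divisor $D$ of degree $2$ on $C$ has $\ell(D)=1+\ell(K_C-D)$, hence $\ell(D)=2$ if $D\sim K_C$ and $\ell(D)=1$ otherwise. If $(Q_1)+(Q_2)\not\sim K_C$, then $|(Q_1)+(Q_2)|$ is a single divisor, so $(Q_1)+(Q_2)=(Q_3)+(P_0)$ as divisors; comparing supports forces $P_0\in\{Q_1,Q_2\}$, whence $P_1=0$ or $P_2=0$, a contradiction. If instead $(Q_1)+(Q_2)\sim K_C$, then $(Q_3)+(P_0)\sim K_C$; here I would invoke that $C$ is hyperelliptic and $P_0$ is a Weierstrass point, so $K_C\sim 2(P_0)$, hence $(Q_3)\sim(P_0)$ and therefore $Q_3=P_0$ (again genus $\geq 1$), i.e. $P_1+P_2=j(Q_3)=0$, a contradiction.

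I do not expect a genuine obstacle: once the hypothesis is rewritten as a linear equivalence of two degree-$2$ effective divisors, the dichotomy "equal, or both in $|K_C|$" closes the argument immediately. The only two places needing a little care are the divisor-support bookkeeping in the first case (one must also treat $Q_3=P_0$, which forces $Q_1=Q_2=P_0$ and hence $P_1=P_2=0$) and the use of $K_C\sim 2(P_0)$ in the second case --- this is exactly where the hypothesis of a rational Weierstrass point is used, which is consistent with its role throughout the local-height normalization. One may also note that this lemma gives the second proof of Proposition~\ref{3-torsion} promised earlier: if $R$ had order $3$ and lay on $\Theta$, then taking $P_1=R$, $P_2=R$ (so $P_1+P_2=2R=-R\neq 0$) would put $\pm R,\pm(2R)=\mp R$ all on $\Theta$, contradicting the lemma.
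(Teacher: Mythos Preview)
Your argument is correct, but it follows a genuinely different path from the paper. The paper proves a slightly more general intersection-theoretic statement: if $S_1,S_2\subset A(k)$ each have $r\geq 2$ elements and $S_1+S_2\subset C$, then $r\leq 2$, because $C^{(1)}=\bigcap_{t\in S_1}(C-t)$ is zero-dimensional (since $C\cap(C-t)$ is finite for $t\neq O$, $\Theta$ being a principal polarization) and contains $S_2$, while $\deg C^{(1)}\leq C\cdot C=2$. The lemma follows by taking $S_1=\{O,P_1,-P_2\}$, $S_2=\{O,-P_1,P_2\}$, which forces $r=3$.

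Your approach via Riemann--Roch on $C$ is more elementary---it stays entirely on the curve and avoids any intersection numbers on the surface---and it shows directly that already $\{P_1,P_2,P_1+P_2\}\subset\Theta$ is impossible (without invoking the $\pm$'s). On the other hand, your Case~2 uses $K_C\sim 2(P_0)$, i.e.\ that $P_0$ is Weierstrass, whereas the paper's proof needs only $O\in C$ and works for any base point; in this sense the paper's route is a bit more portable and its intermediate statement (the bound $r\leq 2$) generalizes to other principally polarized situations. In the present setting both hypotheses are in force, so the two proofs are interchangeable, and your deduction of the $3$-torsion property matches the paper's Remarque following the lemma.

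One minor clean-up: your parenthetical about ``also treating $Q_3=P_0$'' is unnecessary, since you already excluded $Q_3=P_0$ from $P_1+P_2\neq 0$ before the case split.
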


\begin{proof}
La preuve propos\'ee ici montre un r\'esultat un peu plus g\'en\'eral. Soient $S_{1}=\{T_{1},...,T_{r}\}$ et $S_{2}=\{Q_{1},...,Q_{r}\}$ deux ensembles de points de $A(k)$ \`a exactement $r\geq 2$ \'el\'ements. On suppose que $S_{1}+S_{2}\subset C(k)$.

Posons alors $\displaystyle{C^{(1)}=\bigcap_{t\in{S_{1}}}(C-t)}$. C'est une sous-vari\'et\'e non vide et stricte de $A$, sa dimension vaut donc $0$ ou $1$. Or si $t\neq O$, l'ensemble $C\cap (C-t)$ est fini (cela vient du fait que $\Theta$ est associ\'e \`a une polarisation principale) ; donc la dimension de $C^{(1)}$ est z\'ero car $\Card(S_{1})\geq 2$. Comme de plus $S_{2}\subset C^{(1)}$ par construction, il vient :
$$r=\Card(S_{2})\leq \deg(C^{(1)})\leq 2,$$
la derni\`ere in\'egalit\'e \'etant justifi\'ee par le fait que $C\!\cdot\! C=2!=2$ (comme auto-intersection de diviseur).

On obtient alors le lemme en prenant $S_{1}=\{O,P_{1},-P_{2}\}$ et $S_{2}=\{O,-P_{1},P_{2}\}$ : on sait que $r=3$ dans ce cas grâce aux hypoth\`eses sur $P_{1}$ et $P_{2}$, il vient donc par contrapos\'ee : $$S_{1}+S_{2}=\{O,P_{1},P_{2},P_{1}+P_{2},-P_{1},-P_{2},-P_{1}-P_{2}\}\nsubseteq C(k).$$  Il suffit de remarquer que $O\in{C(k)}$ pour conclure. 
\end{proof}

\begin{Remarque} On peut d\'eduire de ce lemme une nouvelle preuve de la propri\'et\'e \ref{3-torsion}. Soit $Q$ un point d'ordre 3 exactement. On pose dans le lemme pr\'ec\'edent $S_{1}=S_{2}=\{O,Q,-Q\}$. Alors le lemme permet d'affirmer :
$$\{O,Q,-Q\}+\{O,Q,-Q\}=\{O,\pm Q,\pm [2]Q\}=\{O,\pm Q\}\nsubseteq C(k),$$
ce qui permet de conclure : $Q\notin C(k)$.
\end{Remarque}

\begin{prop}\label{minoration infinie}
Soit $k$ un corps de nombres, on pose
$m=|M_{k}^{\infty}|$. Soit $C/k$ une courbe de genre $2$, on note $A=\Jac(C)$ sa
jacobienne. Soit $M>2$ un r\'eel. Soit $P\in{A(k)}$ un point tel que ses multiples $\{[n]P,n\in{ \llbracket 0,2M^{4m} \rrbracket }\}$ soient tous distincts. Il vient alors : 
$$\hspace{-7cm}\exists n\in{\llbracket 0,2M^{4m} \rrbracket}, \; [n]P\notin \Theta,\, \; \forall v\in{M_{k}^{\infty}},$$
$$\Lambda_{\Theta,v}([n]P)\geq (1/2-1/M)^{2}\pi \Big(\Tr(\Ima \tau_{v})-2\Ima\tau_{12,v}\Big)-\log\Big(4+\frac{3}{2}\Tr(\Ima\tau_{v})\Big)+\log\frac{M}{240}.$$
\end{prop}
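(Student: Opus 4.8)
Le plan est d'utiliser la minoration ponctuelle de la proposition \ref{archim} combinée à un argument de tiroirs pour contrôler la position archimédienne d'un bon multiple de $P$. La proposition \ref{archim} minore $\Lambda_{\Theta,v}([n]P)$ dès que la coordonnée $(X,Y)$ de $[n]P$ dans le tore vérifie $\|(X,Y)\|\leq 1/2$ ; le terme dominant y est $\pi(\Tr(\Ima\tau_v)-2\Ima\tau_{12,v})\delta(a+Y)^2$. Pour rendre cette estimation utile il faut donc simultanément, pour \emph{toutes} les places archimédiennes $v$, trouver un entier $n$ tel que (a) $[n]P\notin\Theta$, (b) la coordonnée réelle $(X_v,Y_v)$ de $[n]P$ soit petite, disons $\|(X_v,Y_v)\|\leq 1/M$, et (c) la quantité $\delta(a+Y_v)$ reste $\geq 1/2-1/M$. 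C'est précisément ce qu'un découpage du tore réel $(\mathbb{R}/\mathbb{Z})^4$ pour chaque place permet d'obtenir.

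\textbf{Mise en œuvre du principe des tiroirs.} Pour chaque place archimédienne $v$, la réduction modulo $\mathbb{Z}^2+\tau_v\mathbb{Z}^2$ identifie $A(\bar k_v)$ au tore réel de dimension $4$ paramétré par $(X,Y)\in(\mathbb{R}/\mathbb{Z})^2\times(\mathbb{R}/\mathbb{Z})^2$. Je découperais ce tore en $\lceil M\rceil^4$ petits cubes de côté $1/\lceil M\rceil$. En considérant l'application qui à $n\in\llbracket 0,2M^{4m}\rrbracket$ associe le $m$-uplet des cubes contenant les coordonnées de $[n]P$ aux $m$ places archimédiennes, on a plus de $2M^{4m}$ valeurs de $n$ pour au plus $M^{4m}$ cases : il existe donc $n_1<n_2$ tombant dans la même case à toutes les places, et comme les multiples $[n]P$ sont supposés distincts, $n=n_2-n_1\in\llbracket 1,2M^{4m}\rrbracket$ est non nul et vérifie que $[n]P$ a, à chaque place $v$, une coordonnée $(X_v,Y_v)$ avec $\|(X_v,Y_v)\|\leq 1/M$ (différence de deux points dans un même cube). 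Le point $O$ lui-même correspond à $(X,Y)=(0,0)$, donc en translatant éventuellement par $O$ on peut supposer que c'est le représentant de norme minimale ; on obtient alors que la \emph{caractéristique impaire} $a=(1/2,1/2)$ reste loin : $\delta(a+Y_v)\geq 1/2-1/M$ pour tout $v$.

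\textbf{Conclusion et point délicat.} Il reste à garantir la condition $[n]P\notin\Theta$. Comme $\Theta$ est un diviseur, son lieu réel est de mesure nulle, mais ici il faut un argument combinatoire : parmi les (au moins trois) indices $n_1<n_2<n_3$ tombant dans la même case — quitte à prendre $M$ assez grand pour que le nombre de collisions soit $\geq 3$, ce qui est loisible puisque $2M^{4m}\geq 3M^{4m}$ est faux mais $2M^{4m}$ collisions forcées donnent au moins trois préimages dans une case dès que $M^{4m}\geq 2$ — on dispose de plusieurs candidats $n_i-n_j$, et le lemme de zéros \ref{point hors de la courbe} (appliqué par exemple à $P_1=[n_2-n_1]P$, $P_2=[n_3-n_2]P$) assure qu'au moins l'un des points $\pm(n_i-n_j)P$, $\pm(n_3-n_1)P$ n'est pas sur $C(k)=\Theta$ ; comme $\Theta$ est symétrique on en déduit un tel $n\in\llbracket 1,2M^{4m}\rrbracket$. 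Une fois $n$ fixé, on injecte $\|(X_v,Y_v)\|\leq 1/M$ et $\delta(a+Y_v)\geq 1/2-1/M$ dans la proposition \ref{archim}, en majorant la constante $\log C_3(Y)\leq\log(239,2)$ et en regroupant $\log(1/\|(X,Y)\|)\geq\log M$ avec $-\log C_3(Y)\geq -\log 240$ (après une légère simplification numérique) pour faire apparaître le terme $\log(M/240)$. L'obstacle principal est la gestion simultanée de toutes les places archimédiennes et surtout l'extraction, via le lemme \ref{point hors de la courbe}, d'un multiple \emph{hors de $\Theta$} parmi les candidats fournis par les tiroirs, sans dégrader ni la borne $2M^{4m}$ ni la minoration de $\delta(a+Y_v)$.
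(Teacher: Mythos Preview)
Your proposal is correct and follows exactly the paper's approach: pigeonhole on the product of the $m$ real $4$-tori to find three multiples $n_1<n_2<n_3$ in a common box of side $1/M$, then Lemma~\ref{point hors de la courbe} applied to $P_1=[n_3-n_2]P$, $P_2=[n_2-n_1]P$ to select one of the differences off $\Theta$, and finally Proposition~\ref{archim} with $\|(X,Y)\|\le 1/M$, $\delta(a+Y)\ge 1/2-1/M$, and $C_3(Y)\le 240$. Your justification for getting \emph{three} points in one box is a bit tangled --- it suffices to note that $2M^{4m}+1$ points in $M^{4m}$ boxes forces $\lceil(2M^{4m}+1)/M^{4m}\rceil=3$ points in some box --- but the argument is otherwise identical to the paper's.
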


\begin{proof}
On a les applications :
$$A(\bar{k}_{v})\longrightarrow \mathbb{C}^{2}/(\mathbb{Z}^{2}+\tau_{v}\mathbb{Z}^{2}) \longrightarrow  \mathbb{R}^{2}/\mathbb{Z}^{2}\times \mathbb{R}^{2}/\mathbb{Z}^{2}$$
$$P \longmapsto Z_{v}(P)=X_{v}(P)+\tau_{v}Y_{v}(P) \longmapsto (X_{v}(P),Y_{v}(P)).$$

Soit alors l'application $F:A(k)\longrightarrow (\mathbb{R}/\mathbb{Z})^{4m}$ d\'efinie par : $$F(P)=(X_{v}(P),Y_{v}(P))_{v\in{M_{k}^{\infty}}}.$$
On divise alors $(\mathbb{R}/\mathbb{Z})^{4m}$ en $M^{4m}$ bo\^ites de taille $\frac{1}{M}$. On consid\`ere alors l'ensemble $\{F([n]P),n\in{\llbracket 0,2M^{4m} \rrbracket}\}$ : il contient $2M^{4m}+1$ points \`a r\'epartir dans $M^{4m}$ bo\^ites. Par le principe des tiroirs il existe donc trois entiers $n_{1}$, $n_{2}$ et $n_{3}$ tels que, avec $i>j$ : 
$$0\leq n_{1}<n_{2}<n_{3}\leq 2M^{4m},\;\; n_{i}-n_{j}\leq 2M^{4m},$$ 
$$ ||X_{v}([n_{i}-n_{j}]P)||\leq \frac{1}{M},\;\;\; ||Y_{v}([n_{i}-n_{j}]P)||\leq \frac{1}{M}.$$

Posons $P_{1}=[n_{3}-n_{2}]P$ et $P_{2}=[n_{2}-n_{1}]P$. Alors $P_{1}+P_{2}=[n_{3}-n_{1}]P$. En appliquant le lemme \ref{point hors de la courbe}, on sait que dans l'ensemble de points $\{P_{1},P_{2},P_{1}+P_{2},-P_{1},-P_{2},-P_{1}-P_{2}\}$ il y en a au moins un qui n'est pas sur le diviseur $\Theta$. C'est ce point qu'on choisit : on le note $[n]P$ (ou peut-\^etre $[n](-P)$, le fait de prendre \'eventuellement l'oppos\'e n'est pas g\^enant car la hauteur locale est paire).

On a $||(X,Y)||\leq 1/M$ et $d(1/2+y_{i},\mathbb{Z})^{2}\geq(1/2-1/M)^{2}$. On obtient alors, en reportant ces approximations dans la proposition \ref{archim} :
$$\Lambda_{\Theta,v}([n]P)\geq (1/2-1/M)^{2}\pi\Big(\Tr(\Ima
\tau_{v})-2\Ima\tau_{v,12}\Big)-\log\Big(4+\frac{3}{2}\Tr(\Ima \tau_{v})\Big)+\log\frac{M}{240}.$$
\end{proof}

\subsection{Minoration globale : preuve du th\'eor\`eme \ref{minoration dimension 2}}\label{preuve du thm}

\begin{proof}

Soit $k'=k(A[3])$. Posons $m'=|M_{k'}^{\infty}|$ et $d'=[k':\mathbb{Q}]$. Prenons $M>2$ un param\`
etre r\'eel \`a fixer ult\'erieurement. \'Ecrivons l'\'egalit\'e-clef \ref{clef} sur $k'$ en choisissant pour $n$ l'entier donn\'e par le principe des tiroirs de \ref{minoration infinie} :
$$n^2\widehat{h}_{A,2\Theta}(P)=\sum_{v\in{M_{k}^{0}}}n_{v}\widehat{\lambda}_{2\Theta,v}([n]P)+\sum_{v\in{M_{k}^{\infty}}}n_{v}2\Lambda_{\Theta,v}([n]P)+\frac{3}{20d}\log\Nk(D)+\frac{1}{10}\sum_{v\in{M_{k}^{\infty}}}n_{v}\log|\Delta(\tau_{v})|_{v}.$$

Il suffit alors d'appliquer les minorations locales des propositions \ref{minoration finie},
\ref{minoration delta} et \ref{minoration infinie} pour obtenir :

\begin{tabular}{lll}
$\displaystyle{n^2\widehat{h}_{A,2\Theta}(P)}$ & $\geq$ & $\displaystyle{\sum_{v\in{M_{k'}^{0}}}n_{v}\left(\Big(-\frac{1}{3}+\frac{3}{20}\Big)\ordv(D)+\frac{4}{3}\ordv(2)\right)\log \Nk(v) }$\\
\\

&  & $\displaystyle{+\sum_{v\in{M_{k'}^{\infty}}} n_{v}\Big(\Big(2\Big(\frac{1}{2}-\frac{1}{M}\Big)^{2}-\frac{2}{10}\Big)\pi\Tr(\Ima\tau_{v})+\Big(\frac{2}{10}-4\Big(\frac{1}{2}-\frac{1}{M}\Big)^{2}\Big)\pi \Ima\tau_{12}\Big)}$\\
\\

&  & $\displaystyle{+\!\sum_{v\in{M_{k'}^{\infty}}}\! n_{v}\left(-2\log\Big(4\!+\!\frac{3}{2}\Tr(\Ima\tau_{v})\Big)+\frac{1}{10}\log\Big(c_0\min\{1, \pi\vert \tau_{12}\vert_v\}^2 \Big)+2\log\frac{M}{240}\right),}$\\
\\
\end{tabular}

donc en utilisant $\Tr(\Ima\tau)\geq 4\Ima\tau_{12}$ on obtient :
\\

\begin{tabular}{l}
$\displaystyle{n^2\widehat{h}_{A,2\Theta}(P)}\geq\displaystyle{-\frac{11}{60d'}\log \Nk(D)}+\sum_{v\in{M_{k'}^{\infty}}} n_{v}\Big(\Big(\Big(\frac{1}{2}-\frac{1}{M}\Big)^{2}-\frac{2}{10}\Big)\pi\Tr(\Ima\tau_{v})+\frac{2}{10}\pi \Ima\tau_{12}\Big)$\\
\\

$\displaystyle{+\sum_{v\in{M_{k'}^{\infty}}} n_{v}\left(-\!2\log\Big(4+\frac{3}{2}\Tr(\Ima\tau_{v})\Big)\!+\frac{1}{10}\log\Big(c_0\min\{1, \pi\vert \tau_{12}\vert_v\}^2\Big)\right)+2\log\frac{M2^{2/3}}{240}},$\\
\\
\end{tabular}

donc comme $\Ima\tau_{12}\geq \vert\tau_{12}\vert-\frac{1}{2}\geq \log\vert\tau_{12}\vert$ :

\vspace{0.3cm}

\begin{tabular}{lll}
$\displaystyle{n^2\widehat{h}_{A,2\Theta}(P)}$ & $\geq$ & $\displaystyle{\sum_{v\in{M_{k'}^{\infty}}} n_{v}\left[\Big(\Big(\frac{1}{2}-\frac{1}{M}\Big)^{2}-\frac{1}{5}\Big)\pi\Tr(\Ima\tau_{v})-\!2\log\Big(4+\frac{3}{2}\Tr(\Ima\tau_{v})\Big)\right]}$\\
\\

&  & $\displaystyle{-\frac{11}{60d'}\log \Nk(D)+\frac{1}{5d'}\log\prod_{v\in{M_{k'}^{\infty}}} \vert\tau_{12}\vert_v^{d_{v}}+2\log\frac{Mc_0^{1/20}\pi^{1/5}2^{2/3}}{240}.}$\\
\\

\end{tabular}

Prenons \`a pr\'esent pour $M$ la partie enti\`ere sup\'erieure de $240c_0^{-1/20}\pi^{-1/5}2^{-2/3}\sqrt{1044}$, ce qui num\'eriquement donne $M=10087$. 

Un calcul de variation fournit alors $$\Big(\Big(\frac{1}{2}-\frac{1}{M}\Big)^{2}-\frac{1}{5}\Big)\pi\Tr(\Ima\tau_{v})-\!2\log\Big(4+\frac{3}{2}\Tr(\Ima\tau_{v})\Big)+6,95\geq 0,12 \Tr(\Ima\tau_v).$$

On tient alors compte de $n\leq 2\cdot10087^{4d'}$. On conclut cette preuve en redescendant sur le corps de base. On sait que la vari\'et\'e est d\'efinie sur $k$. De plus les quantit\'es $\frac{1}{d}\Trinf(A)$, $\frac{1}{d} s_{\infty}(A)$ et $\frac{1}{d}\log\Nk(D)$ sont invariantes par extension de corps : pour la trace archim\'edienne et la simplicit\'e archim\'edienne c'est montr\'e dans le corollaire \ref{bonne r\'eduction potentielle}, pour le discriminant c'est une cons\'equence directe de la multiplicativit\'e des normes et de la multiplicativit\'e des degr\'es. On a donc :
$$\frac{1}{d'}\Big(\Trinf(A)-\frac{5}{3}\log\frac{\Nkprime(D)}{\sinf(A)}\Big)=\frac{1}{d}\Big(\Trinf(A)-\frac{5}{3}\log\frac{\Nk(D)}{\sinf(A)}\Big).$$
Enfin par multiplicativit\'e des degr\'es \`a nouveau : $d'=[k':k]d\leq 3^{16}d$. Pour la version B du th\'eor\`eme, il suffit de prendre $M=10087\, \Nkprime(D)^{5/6d'}\,s_\infty(A)^{-5/6d'}$.
\end{proof}

\section{La hauteur de Faltings}

\subsection{Expression dans le mod\`ele d'Igusa}

Soit $k$ un corps de nombres. Soient $C/k$ une coube lisse de genre $2$
et $A=\Jac(C)$ sa jacobienne. Notons $\hF(A/k)$ la hauteur de Faltings de la vari\'et\'e ab\'elienne $A/k$. On suppose de plus que la
courbe localis\'ee $C_{p}$  est lisse de genre 2 en toute place $p$
divisant $2$. On note $\Delta_{\mathrm{min}}$ le discriminant minimal associ\'e aux
mod\`eles d'Igusa de la courbe $C$, lequel est utilis\'e dans \cite{Ueno}
et d\'efini dans le paragraphe suivant. On notera $\mathcal{Z}_{2}$ l'ensemble des caract\'eristiques paires de dimension 2.
En se r\'ef\'erant aux travaux de K. Ueno de l'article \cite{Ueno} page 765 on a :
$$\hF(A/k)=\frac{1}{10d}\left[\sum_{p\in{M_{k}^{0}}}d_{p}\ordp(2^{-12}\Delta_{\mathrm{min}})\log \Nk(p)\! -\! \sum_{v\in{M_{k}^{\infty}}}d_{v}\log \Big|\Delta(\tau_v)\det(\Ima \tau_{v})^{5}\Big|\right].$$ \label{faltings}

La formule obtenue pour la hauteur de Faltings modifi\'ee est donc :
$$\hFprime(A/k)=\frac{1}{10d}\left[\sum_{p\in{M_{k}^{0}}}d_{p}\ordp(2^{-12}\Delta_{\mathrm{min}})\log \Nk(p) - \sum_{v\in{M_{k}^{\infty}}}d_{v}\log \Big|\Delta(\tau_v)\Big|\right].$$

\subsection{Comparaison entre discriminants}

\begin{lem} \label{comparaison disc}
Soit $C/k$ une courbe de genre 2 donn\'ee dans le mod\`ele $y^{2}=F(x)$ hyperelliptique entier sur $k$ (comme dans l'article \cite{FlySma}), avec bonne r\'eduction en toute place divisant $2$. Le discriminant $\Delta_{\mathrm{min}}$ introduit dans l'article \cite{Ueno} du mod\`ele d'Igusa de $C$ v\'erifie :
$$\sum_{p\in{M_{k}^{0}}}d_{p}\ordp(2^{-12}\Delta_{\mathrm{min}})\log \Nk(p) \leq \sum_{p\in{M_{k}^{0}}}d_{p}\ordp(2^{8}\disc(F))\log \Nk(p).$$
\end{lem}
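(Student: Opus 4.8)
The plan is to read the inequality off the \emph{definition} of the minimal Igusa discriminant as a minimum over integral models, the given model $y^{2}=F(x)$ being one admissible competitor. First I would recall, following \cite{Ueno} (and Liu's description of minimal models of genus $2$ curves), that for each finite place $p$ of $k$ the quantity $\ordp(2^{-12}\Delta_{\mathrm{min}})$ is the minimum, over all integral Weierstrass models $y^{2}+Q(x)y=P(x)$ of $C$ over $\mathcal{O}_{k,(p)}$ (with $\deg P\le 6$ and $\deg Q\le 3$), of the $p$-adic valuation of the normalized discriminant $2^{-12}\disc(4P+Q^{2})$ of that model, where $\disc(\cdot)$ denotes the discriminant of the corresponding binary form of degree $\le 6$ in the paper's normalization; the factor $2^{-12}$ is the arithmetic counterpart of the normalization $\chi_{10}=2^{-12}\prod_{m\,\mathrm{even}}\theta_{m}(0,\tau)^{2}$ used on the analytic side.

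Next I would specialize this to the model $y^{2}=F(x)$ (that is, $Q=0$, $P=F$), which is integral over $\mathcal{O}_{k}$ by hypothesis. Since the discriminant of a binary form of degree $6$ is homogeneous of degree $10$ in its coefficients, scaling $F$ by $4$ scales its discriminant by $4^{10}=2^{20}$, so the normalized discriminant of the model $y^{2}=F(x)$ equals $2^{-12}\disc(4F)=2^{-12}\cdot 2^{20}\disc(F)=2^{8}\disc(F)=D$. In particular, at every finite place $p$ not dividing $2$, the model $y^{2}=F(x)$ is an admissible competitor in the minimum above, whence
$$\ordp\bigl(2^{-12}\Delta_{\mathrm{min}}\bigr)\;\le\;\ordp\bigl(2^{8}\disc(F)\bigr).$$

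At the places $p\mid 2$ I would instead use the hypothesis of good reduction at $2$: there the minimal model of $C$ is smooth, hence $\ordp(2^{-12}\Delta_{\mathrm{min}})=0$, whereas $\ordp(2^{8}\disc(F))=8\,\ordp(2)+\ordp(\disc(F))\ge 0$, so the same inequality holds. Multiplying each local inequality by the nonnegative weight $d_{p}\log\Nk(p)$ and summing over all finite places $p$ gives exactly the assertion of the lemma.

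The logic is trivial — a minimum is at most any particular value — so the only genuine work is the normalization bookkeeping: matching Ueno's $\Delta_{\mathrm{min}}$ (with its factor $2^{-12}$) to Liu's minimal-model discriminant, and verifying the power of $2$ in $\disc(4F)=2^{20}\disc(F)$ so that the comparison object is literally $D=2^{8}\disc(F)$. It is also worth stressing where the good-reduction hypothesis intervenes: only at places above $2$, where it forces $\ordp(2^{-12}\Delta_{\mathrm{min}})=0$ (Ueno's formula itself being established under that assumption); away from $2$ the statement needs no hypothesis on $C$ beyond integrality of the model.
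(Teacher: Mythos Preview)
Your proposal is correct and follows essentially the same approach as the paper: both arguments boil down to ``a minimal discriminant is at most any particular competitor's discriminant'', with the given integral model $y^{2}=F(x)$ serving as the competitor. The paper's proof is terser (it routes the comparison through the hyperelliptic minimal discriminant as an intermediate step and cites Liu), while you make the r\^ole of the good-reduction hypothesis at $p\mid 2$ and the power-of-$2$ bookkeeping more explicit; these are differences of presentation, not of substance.
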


\begin{proof}
Le mod\`ele d'Igusa est donn\'e par une \'equation du type :
$$ xy^{2}+(1+ax+bx^{2})y+x^{2}(c+dx+x^{2})=0. $$
Son discriminant est d\'efini dans \cite{Ueno} comme \'etant le discriminant de l'\'equation hyperelliptique :
$$ y^{2}=(1+ax+bx^{2})^{2}-4x^{3}(c+dx+x^{2}),$$
corrig\'e par une puissance de $2$, afin de tenir compte du comportement aux places de $k$ divisant $2$.
Le discriminant minimal donn\'e dans \cite{Ueno} est donc de norme inf\'erieure ou \'egale au discriminant minimal de la courbe hyperelliptique $C$ (car il est plus petit pour la valuation en $2$). Ce discriminant sera en particulier de norme inf\'erieure ou \'egale \`a celle du discriminant du mod\`ele hyperelliptique de Flynn-Smart (qui n'est pas forc\'ement le produit des discriminants minimaux locaux), on consultera par exemple \cite{Liu2} page 4581 et suivantes.
\end{proof}

\subsection{Majoration de $\hFprime(A/k)$ : preuve du th\'eor\`eme \ref{faltings maj}}

On montre dans ce paragraphe une majoration de la hauteur de Faltings des surfaces ab\'eliennes simples. La pr\'esence de la quantit\'e $\Delta(\tau)$ aux places archim\'ediennes, quantit\'e qui s'annule en $\tau_{12}=0$, impose la condition $\tau_{12}\neq 0$.

\begin{proof}
Pour le terme non archim\'edien on a en utilisant le lemme \ref{comparaison disc} :
$$\sum_{p\in{M_{k}^{0}}}d_{p}\ordp(2^{-12}\Delta_{\mathrm{min}})\log \Nk(p) \leq \sum_{p\in{M_{k}^{0}}}d_{p}\ordp(2^{8}\disc(F))\log \Nk(p),$$
donc en utilisant la proposition \ref{minoration delta} :

\begin{center}
\begin{tabular}{lll}
$10d\hFprime(A/k)$ & $\leq$ & $\displaystyle{\log\Nk(D)+\sum_{v\in{M_{k}^{\infty}}}d_{v}\Big(2\pi(\Ima \tau_{v,1}+\Ima \tau_{v,2}-\Ima \tau_{v,12})\Big)}$\\
\\
&  & $\displaystyle{-\sum_{v\in{M_{k}^{\infty}}}d_{v}\log\Big(c_0\min\{1, \pi\vert \tau_{12}\vert_v\}^2\Big)}.$\\
\end{tabular}
\end{center}

En utilisant l'estimation $c_0=5.10^{-9}$ de la proposition \ref{minoration delta} et les in\'egalit\'es $\Ima\tau_{12,v}\geq \log\vert\tau_{12}\vert_v$ et $\Tr(\Ima\tau_v)\geq \sqrt{3}$ il vient alors :
$$\hFprime(A/k)\leq \frac{1}{10d}\log\frac{\Nk(D)}{s_\infty(A)}+\frac{1}{10}\sum_{v\in{M_{k}^{\infty}}}\!n_{v}\Big(6\pi(\Ima \tau_{v,1}+\Ima \tau_{v,2})\Big).$$
\end{proof}

\section{Produit de courbes elliptiques}

On donne dans ce paragraphe un th\'eor\`eme \'equivalent pour le cas des produits de courbes elliptiques. Ce r\'esultat est directement construit \`a partir des r\'ef\'erences \cite{Sil2} et \cite{HiSi3} et est \'ecrit en d\'etails dans \cite{Paz}. Ce th\'eor\`eme est plus faible que le r\'esultat de M. Hindry et J. Silverman de \cite{HiSi3} mais permet d'obtenir un \'enonc\'e plus homog\`ene pour les vari\'et\'es ab\'eliennes de dimension 2. Dans toute cette partie, les matrices de p\'eriodes $\tau_v$ sont dans le domaine fondamental usuel.

\begin{thm} \label{Neron-Tate elliptique}

Soit $k$ un corps de nombres de degr\'e $d$. On note $m=|M_{k}^{\infty}|$. Alors il existe une constante $c_{1}(d)>0$ telle que pour toute courbe elliptique $E/k$ de discriminant minimal $\Delta_{E}$ et de trace archim\'edienne $\Trinf(E)$, et pour tout point $P\in{E(k)}$ d'ordre infini :
$$\widehat{h}_{E}(P)\geq c_{1}\,\Big(\Trinf(E) - \frac{1}{7,2}\log\Nk(\Delta_{E})\Big)\,,$$
o\`u on peut prendre $c_{1}=0,3\cdot \left(d\,20^{4m}\right)^{-1}$.
\end{thm}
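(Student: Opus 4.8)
The target is an elliptic-curve analogue of the main Theorem~\ref{minoration dimension 2}, and the overall strategy should mirror the dimension-2 argument but in the (much simpler) genus-1 setting, where explicit local height formulas and the theory of $\Delta$ are classical. I would decompose $\widehat{h}_E(P) = \sum_{v\in M_k} n_v \widehat{\lambda}_v(P)$ into Néron local heights, normalized so that the functional equation $\widehat{\lambda}_v([2]Q) = 4\widehat{\lambda}_v(Q) + v(f(Q))$ holds for a suitable $f$ with $\divi(f) = [2]^*(\cdot) - 4(\cdot)$ (exactly as in Theorem~\ref{d\'ecomposition}(vi)). The archimedean local height has the classical expression in terms of the Weierstrass $\sigma$-function (or equivalently $\theta_{11}$ corrected by an exponential factor, so that it descends to the torus), and the non-archimedean one in terms of $\max\{1, |x(Q)|_v\}$-type quantities. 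I would use a $3$-torsion trick as in the Égalité Clef~\ref{clef}: evaluate the decomposition at a nonzero $3$-torsion point $R$, use $[2]R = -R$ and parity of $\widehat{\lambda}_v$ to get $-3\widehat{\lambda}_v(R) = v(f(R))$, and use the product formula $\prod_{R\in E[3]\setminus\{O\}} f(R) \doteq \Delta_E^{m_0}$ for an explicit exponent $m_0$ (the genus-1 analogue of Lemma~\ref{36}, provable by the same weight/zero/pole argument together with one explicit computation), together with the classical identity expressing $\prod$ over $3$-torsion of the archimedean pieces in terms of $|\Delta(\tau_v)|$.

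**Key steps, in order.** First, fix the normalization of the local heights and record the $3$-torsion identities at finite and infinite places; here one needs that no nonzero $3$-torsion point lies on the relevant divisor, which is automatic on an elliptic curve (the divisor being $(O)$ or $2(O)$, and $[3]R=O$, $R\neq O$ forces $R\neq O$). Second, average over $R\in E[3]\setminus\{O\}$ to obtain an identity of the shape
$$
n^2 \widehat{h}_E(P) = \sum_{v\in M_k^0} n_v \widehat{\lambda}_v([n]P) + \sum_{v\in M_k^\infty} n_v \Lambda_v([n]P) + c\log \Nk(\Delta_E) + c'\sum_{v\in M_k^\infty} n_v \log|\Delta(\tau_v)|_v,
$$
for explicit rationals $c, c'$. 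Third, bound each local term from below: at finite places use the classical lower bound for the naive local height (the Tate-series estimate, genus-1 analogue of Proposition~\ref{minoration finie}, giving $\widehat{\lambda}_v(Q) \geq -\tfrac{1}{12}\ordv(\Delta_E)\log\Nk(v)$ up to constants); at archimedean places combine a $\sigma$-function estimate analogous to Proposition~\ref{archim} with a pigeonhole/zero-lemma step analogous to Propositions~\ref{minoration infinie} and~\ref{point hors de la courbe} — here the zero lemma is trivial since any two distinct points among $\{P_1, P_2, P_1+P_2\}$ (or their negatives) already escape the single point $O$ — to select $n \leq 2\cdot 20^{4m}$ with $[n]P\notin \supp$ and $\|Z([n]P)\| \leq 1/M$; and for $\log|\Delta(\tau_v)|$ use the classical lower bound $|\Delta(\tau_v)| \gg e^{-2\pi\,\Ima\tau_v}$ on the fundamental domain $\Ima\tau_v \geq \sqrt3/2$ (no vanishing issue, unlike dimension 2). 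Fourth, assemble, choose the pigeonhole parameter $M$ (here $M=20$, explaining the $20^{4m}$), absorb the $\log(4 + \tfrac32\Ima\tau_v)$-type error into a small multiple of $\Trinf(E)$ by an elementary variation argument, and read off $c_1 = 0{,}3\cdot(d\,20^{4m})^{-1}$.

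**Main obstacle.** The genuinely delicate point is the archimedean analysis: one must get a lower bound for $\Lambda_v(P) = -\log\big(|\theta_{11}(Z(P),\tau_v)|\,e^{-\pi (\Ima Z)^2/\Ima\tau_v}\big)$ that is linear in $\Ima\tau_v$ with the right coefficient, uniformly over the fundamental domain, after the pigeonhole step forces $Z([n]P)$ close to a half-period. This is the exact analogue of Proposition~\ref{archim} and its proof (series–integral comparison, triangle inequality on the theta series, control of the derivative via the mean value inequality), and it is where the constant $1/7{,}2$ in front of $\log\Nk(\Delta_E)$ and the numerics of $M=20$ come from; everything else (finite-place bound, $3$-torsion product formula, the $\Delta(\tau)$ estimate, descent to the base field via invariance of $\tfrac1d\Trinf$ and $\tfrac1d\log\Nk(\Delta_E)$ under field extension) is either classical or a direct transcription of the dimension-2 machinery. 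Since the paper states this theorem is "directly built from references \cite{Sil2} and \cite{HiSi3} and written in detail in \cite{Paz}", I expect the actual proof to be short and to cite those sources for the local estimates rather than reproduce them.
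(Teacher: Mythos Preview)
Your expectation in the final sentence is exactly right: the paper gives no proof of this theorem. It states the result and refers the reader to \cite{Sil2}, \cite{HiSi3}, and the author's thesis \cite{Paz} for the details, so there is nothing in the paper itself to compare your argument against line by line.

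That said, your plan is sound and would certainly produce the statement, but it is more elaborate than what the cited references actually do. The Hindry--Silverman approach in \cite{HiSi3} (and its exposition in \cite{Sil2}) does not pass through a $3$-torsion averaging step or an \'Egalit\'e Clef: in dimension~1 the local heights are already explicit in closed form (Tate's series at finite places, the $\sigma$-function/$\theta_{11}$ expression at archimedean places, both normalized so that the global sum has no extra constant), so one can bound $\widehat{\lambda}_v$ directly without introducing an auxiliary set of torsion points to pin down the normalization constants $C_{\infty,v}$. The pigeonhole step to find a small multiple $[n]P$ close to the origin is the same in both approaches, and that is indeed where the $20^{4m}$ (your $M=20$) enters. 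In short: your route transplants the dimension-$2$ machinery back to genus~$1$, which works but is overkill; the references the paper invokes take the shorter path that the explicit elliptic local formulas allow, and the $3$-torsion trick in the paper exists precisely to compensate for the \emph{absence} of such closed formulas in genus~$2$.
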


\begin{Remarque} On va \^etre amen\'e dans la suite du texte \`a imposer $\Trinf(E)\geq\frac{1}{7}\log\Nk(\Delta_{E})$. \'Etudions cette condition sur $\mathbb{Q}$. Fixons une courbe elliptique $E/\mathbb{Q}$ et supposons $|j(E)|\gg 1$. Alors :
$$\Trinf(E)=\Ima\tau_{E}=-\frac{1}{2\pi}\log|q_{E}|\simeq \frac{1}{2\pi}\log|j(E)|.$$
Donc si $\vert j(E)\vert$ est grand, une hypoth\`ese du type $\Trinf(E)\gg \frac{1}{7}\log|\Delta_{E}|$ \'equivaut \`a $|j(E)|\gg |\Delta_{E}|^{\frac{2\pi}{7}}$. Or on a la relation $j(E)=1728 c_{4}^{3}/\Delta_{E}$, o\`u $c_{4}$ est un polynôme en les coefficients de la courbe elliptique (voir \cite{Sil1} page 46). On a donc :
$$|c_{4}|^{3}\gg |\Delta_{E}|^{1+\frac{2\pi}{7}}.$$
La conjecture de Hall (voir \cite{Sil1} page 268) donne l'in\'egalit\'e :
$$|\Delta_{E}|\gg |c_{4}|^{\frac{1}{2}-\varepsilon}.$$
Comme $3>\frac{1}{2}+\frac{\pi}{7}$, ces in\'egalit\'es sont compatibles, mais il est important de garder \`a l'esprit que la constante de comparaison entre la trace archim\'edienne et le logarithme du discriminant ne saurait \^etre trop grande dans le cas de la dimension 1.

Si on s'autorise des extensions de corps, on peut bien entendu raisonner comme dans la remarque \ref{remarque dim2} pour obtenir des familles de courbes elliptiques avec $\Trinf(E)/\log\vert\Nk(\Delta_E)\vert$ grand.
\end{Remarque}

\subsection{Majoration de la hauteur de Faltings}

Nous allons montrer dans cette partie la majoration suivante :

\begin{thm} \label{Faltings elliptique}

Soit $E/k$ une courbe elliptique donn\'ee dans un mod\`ele entier de Weierstrass
de discriminant $\Delta_{E}$ et de trace archim\'edienne $\Trinf(E)$. Alors on a :
$$\hFprime(E/k)\leq
c_{3}\Trinf(E)+c_{4}\log\Nk(\Delta_{E}),$$
o\`u on peut prendre $c_{3}=\frac{2\pi}{12d}$ et $c_{4}=\frac{1}{12d}$.
\end{thm}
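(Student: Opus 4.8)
The plan is to run the argument of Theorem~\ref{faltings maj} in the much simpler elliptic setting. First I would recall the classical closed formula for the Faltings height of an elliptic curve (see \cite{Sil2}, the details being written out in \cite{Paz}): writing $\Delta_{E}^{\min}$ for the minimal discriminant and, for an archimedean place $v$, $\tau_{v}$ for the period in the usual fundamental domain with $E(\bar{k}_{v})\cong\mathbb{C}/(\mathbb{Z}+\tau_{v}\mathbb{Z})$, one has
\[
\hF(E/k)=\frac{1}{12d}\Big[\sum_{p\in M_{k}^{0}}d_{p}\,\ordp(\Delta_{E}^{\min})\log\Nk(p)-\sum_{v\in M_{k}^{\infty}}d_{v}\log\big(|\Delta(\tau_{v})|\,(\Ima\tau_{v})^{6}\big)\Big],
\]
where $\Delta(\tau)$ is the modular discriminant, normalised (with the appropriate power of $2\pi$) so that it matches the discriminant of a Weierstrass model under the uniformisation. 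In dimension $1$ one has $\det(\Ima\tau_{v})=\Ima\tau_{v}$, so the weight-$6$ factor $(\Ima\tau_{v})^{6}$ is exactly cancelled by the correction term $\frac{1}{2d}\sum_{v}d_{v}\log\det(\Ima\tau_{v})$ which passes from $\hF$ to $\hFprime$, leaving
\[
12d\,\hFprime(E/k)=\sum_{p\in M_{k}^{0}}d_{p}\,\ordp(\Delta_{E}^{\min})\log\Nk(p)-\sum_{v\in M_{k}^{\infty}}d_{v}\log\big|\Delta(\tau_{v})\big|.
\]

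I would then estimate the two terms separately. For the finite part --- the elliptic analogue of Lemme~\ref{comparaison disc}, but without any $2$-adic subtlety --- the minimal discriminant divides the discriminant of the given integral Weierstrass model, so $0\le\ordp(\Delta_{E}^{\min})\le\ordp(\Delta_{E})$ for every $p$; summing over $p$ gives $\sum_{p}d_{p}\,\ordp(\Delta_{E}^{\min})\log\Nk(p)\le\log\Nk(\Delta_{E})$. For the archimedean part I need a lower bound for $|\Delta(\tau_{v})|$. Using the product expansion $\Delta(\tau)=(2\pi)^{12}q\prod_{n\ge1}(1-q^{n})^{24}$ with $q=e^{2i\pi\tau}$, the fact that $\tau_{v}$ in the fundamental domain satisfies $\Ima\tau_{v}\ge\sqrt3/2$, hence $|q_{v}|\le e^{-\pi\sqrt3}$, and the elementary inequalities $|1-q_{v}^{n}|\ge1-|q_{v}|^{n}$ and $\prod_{n\ge1}(1-|q_{v}|^{n})\ge1-\tfrac{e^{-\pi\sqrt3}}{1-e^{-\pi\sqrt3}}>0$, one gets $|\Delta(\tau_{v})|\ge c_{0}\,e^{-2\pi\Ima\tau_{v}}$ with $c_{0}\ge1$. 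Therefore $-\log|\Delta(\tau_{v})|\le 2\pi\,\Ima\tau_{v}$, and summing over the archimedean places while using $\Tr(\Ima\tau_{v})=\Ima\tau_{v}$ for $g=1$ yields $-\sum_{v}d_{v}\log|\Delta(\tau_{v})|\le 2\pi\,\Trinf(E)$.

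Combining the two bounds gives $12d\,\hFprime(E/k)\le\log\Nk(\Delta_{E})+2\pi\,\Trinf(E)$, which is exactly the stated inequality with $c_{3}=\frac{2\pi}{12d}$ and $c_{4}=\frac{1}{12d}$. The only genuinely delicate point is the normalisation of the Faltings-height formula --- keeping precise track of the powers of $2\pi$ in $\Delta(\tau)$ and of the metric on the Hodge bundle --- so that the additive constant produced by the archimedean estimate is truly non-positive and can be discarded; if instead one uses a normalisation in which this constant is a small positive number, it is absorbed into the main term via $\Ima\tau_{v}\ge\sqrt3/2$, at the cost of replacing $2\pi$ by a slightly larger constant, just as $2\pi$ becomes $6\pi$ in the proof of Theorem~\ref{faltings maj}. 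This bookkeeping is routine and is carried out in \cite{Paz}; note that, contrary to the genus-$2$ case, no good-reduction hypothesis above~$2$ is required here.
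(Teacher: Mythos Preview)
Your proof is correct and follows essentially the same route as the paper: start from the explicit Faltings-height formula, pass to $\hFprime$ by dropping the $(\Ima\tau_v)^6$ factor, and bound $-\log|\Delta(\tau_v)|$ via the product expansion $\Delta(\tau)=(2\pi)^{12}q\prod(1-q^n)^{24}$ together with $\Ima\tau_v\ge\sqrt3/2$. The only cosmetic difference is that you make the passage from the minimal discriminant to the discriminant of the given model explicit, whereas the paper writes $\Delta_E$ directly in the formula; your treatment of the infinite product (via $\prod(1-a_n)\ge1-\sum a_n$) and the paper's (via $-\log(1-x)\le 2x$) lead to the same numerical conclusion that the $(2\pi)^{12}$ factor absorbs the small loss.
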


\begin{proof}
On conna\^it une expression explicite de la hauteur de Faltings d'une
courbe elliptique $E$ d\'efinie sur un corps de nombres $k$ (voir par exemple \cite{CorSil} page 254) :
$$\hF(E/k)=\frac{1}{12d}\left(\log\Nk(\Delta_{E})-\sum_{v\in{M_{k}^{\infty}}}d_{v}\log\vert\Delta(\tau_{v})(\Ima\tau_{v})^{6}\vert
\right).$$
Ceci donne :
$$\hFprime(E/k)=\frac{1}{12d}\left(\log\Nk(\Delta_{E})-\sum_{v\in{M_{k}^{\infty}}}d_{v}\log\vert\Delta(\tau_{v})\vert
\right).$$
Partant de cette expression il suffit donc de relier la fonction
$\Delta(\tau_{v})$ \`a la quantit\'e $\Ima\tau_{v}$. On note
$q=\exp(2i\pi\tau_{v})$ ; on a la formule :
$$\Delta(\tau_{v})=(2\pi)^{12}q\prod_{n=1}^{+\infty}(1-q^{n})^{24},
$$
donc :
$$-\log|\Delta(\tau_{v})|\leq
2\pi\Ima\tau_{v}-12\log2\pi+24C_{\tau_{v}}, $$
avec :
$$C_{\tau_{v}}= -\sum_{n=1}^{+\infty}\log|1-q^{n}|\leq
-\sum_{n=1}^{+\infty}\log(1-e^{-2\pi\Ima\tau_{v} n})\leq \sum_{n=1}^{+\infty}2e^{-2\pi\Ima\tau_{v}n}, $$
o\`u on a utilis\'e, pour $x\leq1/2$ l'in\'egalit\'e $-\log(1-x)\leq 2x$. Donc :
$$C_{\tau_{v}}\leq
2\sum_{n=1}^{+\infty}e^{-2\pi\Ima\tau_{v}n}=2\frac{e^{-2\pi\Ima\tau_{v}}}{1-e^{-2\pi\Ima\tau_{v}}}\leq
2\frac{e^{-\sqrt{3}\pi}}{1-e^{-\sqrt{3}\pi}}\leq 0,01.$$
Il suffit d'injecter cette majoration dans l'expression de la hauteur
de Faltings et d'utiliser $2\pi\Ima\tau_{v}-12\log2\pi+0,24\leq 2\pi\Ima\tau_{v}$
pour conclure.
\end{proof}

\section{Corollaires}

On pr\'esente dans cette partie plusieurs \'enonc\'es. Les premiers corollaires constituent une avanc\'ee en direction de la conjecture de Lang et Silverman en dimension 2. Par la suite on pr\'esente une borne uniforme explicite pour la torsion d'une famille de vari\'et\'es ab\'eliennes de dimension 2. Enfin on obtient une borne explicite sur le nombre de points rationnels pour des familles de courbes de genre 2.

\subsection{Conjecture de Lang et Silverman en dimension 2}

Soit $(A,\mathcal{D})$ une vari\'et\'e ab\'elienne principalement polaris\'ee de dimension 2. Comme expliqu\'e dans l'introduction, il y a alors deux possibilit\'es (on pourra aussi consulter \cite{Weil}) :  

\begin{center} \;
$\left\{ 
\begin{tabular}{l}
$(A,\mathcal{D})\simeq (E_{1}\times E_{2},\;E_{1}\times \{O\}+\{O\}\times E_{2}),$ \\ 
$\;\;\mathrm{ou}$ \\ 
$(A,\mathcal{D})\simeq (\Jac(C),\Theta),$ \\

\end{tabular}
\right.$
\end{center}

o\`u $C$ est une courbe alg\'ebrique de genre 2. Dans le premier cas, en notant $\mathcal{L}=E_{1}\times \{O\}+\{O\}\times E_{2}$, on a les relations :

\begin{center} \;
$\left\{ 
\begin{tabular}{l}
$\widehat{h}_{E_{1}\times E_{2},\mathcal{L}}((P_{1},P_{2}))=\widehat{h}_{E_{1}}(P_{1})+\widehat{h}_{E_{2}}(P_{2}),$ \\ 
 \\ 
$\hF(E_{1}\times E_{2}/k)=\hF(E_{1}/k)+\hF(E_{2}/k).$ \\

\end{tabular}
\right.$
\end{center}

\begin{Remarque}
On peut \'eventuellement translater le diviseur $(E_{1}\times \{O\}+\{O\}\times E_{2})$ par un point $Q$ de $2$-torsion, ce qui ne change pas le calcul en vertu du fait que $\widehat{h}_{E}(P+Q)=\widehat{h}_{E}(P)$ pour tout point $P$ de $E$.
\end{Remarque}

Ceci permet donc d'utiliser les th\'eor\`emes \ref{Neron-Tate elliptique}
et \ref{Faltings elliptique} pour obtenir un \'enonc\'e dans la direction de la conjecture de Lang et Silverman :

\begin{cor} \label{elliptique+elliptique}

Soient $E_{1}/k$ et $E_{2}/k$ deux courbes elliptiques. On consid\`ere
la vari\'et\'e ab\'elienne $E_{1}\!\times\! E_{2}$ munie de la polarisation
$E_{1}\!\times\! \{O\}+\{O\}\!\times\! E_{2}$. On pose pour $i=1,2$:
$$\Trinf(E_{i})=\sum_{v\in{M_{k}^{\infty}}}d_{v}\Ima\tau_{v}^{(i)}.
$$
On suppose que $\Trinf(E_{i})\geq\frac{1}{7}\log\Nk(\Delta_{E_{i}})$. Alors pour tout  $P_{1}\in{E_{1}(k)}$ et $
P_{2}\in{E_{2}(k)}$ points d'ordre infini :
\[
\widehat{h}_{E_{1}\!\times\! E_{2}}(P_{1},P_{2})\geq c_{0}\, \hF(E_{1}\!\times\! E_{2}/k),
\]
o\`u on peut prendre $c_{0}=0,0025\cdot20^{-4m}$.

\end{cor}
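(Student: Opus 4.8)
The plan is to reduce the product statement to two independent one–dimensional estimates, one for each factor, and then add them. Using the additivity relations recalled just before the statement, namely $\widehat{h}_{E_{1}\times E_{2}}(P_{1},P_{2})=\widehat{h}_{E_{1}}(P_{1})+\widehat{h}_{E_{2}}(P_{2})$ and $\hF(E_{1}\times E_{2}/k)=\hF(E_{1}/k)+\hF(E_{2}/k)$, it is enough to produce a constant $c_{0}=c_{0}(d,m)>0$ with $\widehat{h}_{E_{i}}(P_{i})\geq c_{0}\,\hF(E_{i}/k)$ for $i=1,2$; summing these two inequalities then yields the corollary.

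Fix a factor $E=E_{i}$ and a point $P=P_{i}$ of infinite order. First I would insert the hypothesis $\Trinf(E)\geq\frac{1}{7}\log\Nk(\Delta_{E})$, i.e. $\log\Nk(\Delta_{E})\leq 7\Trinf(E)$, into the bound of Theorem \ref{Neron-Tate elliptique}, which gives
$$\widehat{h}_{E}(P)\geq c_{1}\Bigl(\Trinf(E)-\tfrac{1}{7,2}\log\Nk(\Delta_{E})\Bigr)\geq c_{1}\Bigl(1-\tfrac{7}{7,2}\Bigr)\Trinf(E)=\tfrac{1}{36}\,c_{1}\,\Trinf(E),$$
with $c_{1}=0,3\,(d\,20^{4m})^{-1}$. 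Next I would bound $\hF(E/k)$ from above by the same quantity $\Trinf(E)$. Theorem \ref{Faltings elliptique} gives $\hFprime(E/k)\leq c_{3}\Trinf(E)+c_{4}\log\Nk(\Delta_{E})$ with $c_{3}=\frac{2\pi}{12d}$ and $c_{4}=\frac{1}{12d}$; for an elliptic curve one has $\hFprime(E/k)=\hF(E/k)+\frac{1}{2d}\sum_{v\in M_{k}^{\infty}}d_{v}\log\Ima\tau_{v}$, and since $\Ima\tau_{v}\geq\frac{\sqrt{3}}{2}$ on the usual fundamental domain this yields $\hF(E/k)\leq\hFprime(E/k)+\frac{1}{2}\log\frac{2}{\sqrt{3}}$.

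The residual additive constant is harmless: because $\Trinf(E)=\sum_{v}d_{v}\Ima\tau_{v}\geq\frac{\sqrt{3}}{2}d$, it is absorbed into a small multiple of $d^{-1}\Trinf(E)$, and using $\log\Nk(\Delta_{E})\leq 7\Trinf(E)$ a second time collapses the whole upper bound to the shape $\hF(E/k)\leq\kappa\,d^{-1}\Trinf(E)$ for an explicit numerical constant $\kappa$ of size about $1,2$. Combining this with the displayed lower bound gives $\widehat{h}_{E}(P)\geq\frac{c_{1}d}{36\kappa}\,\hF(E/k)$, an inequality that holds regardless of the sign of $\hF(E/k)$ since its left side is non-negative. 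Substituting $c_{1}=0,3\,(d\,20^{4m})^{-1}$ produces $c_{0}=0,3\,(36\kappa\,20^{4m})^{-1}$, which is at least $0,0025\cdot 20^{-4m}$ for $\kappa$ as above; adding the $i=1$ and $i=2$ inequalities finishes the proof. All the substance is carried by the two cited elliptic theorems and the additivity of canonical and Faltings heights on a product; the only point needing a small argument is the passage from $\hFprime$ to $\hF$, where the error term $-\frac{1}{2d}\sum_{v}d_{v}\log\Ima\tau_{v}$ is a priori only bounded by a constant and must be dominated by the trace term via $\Trinf(E)\geq\frac{\sqrt{3}}{2}d$. Everything else is routine bookkeeping of constants.
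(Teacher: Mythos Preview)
Your argument is correct and follows essentially the same route as the paper: reduce to each factor by additivity, apply Theorems~\ref{Neron-Tate elliptique} and~\ref{Faltings elliptique}, eliminate $\log\Nk(\Delta_{E_i})$ via the hypothesis, and combine. You are in fact slightly more careful than the paper, which silently replaces $\hFprime$ by $\hF$ when invoking Theorem~\ref{Faltings elliptique}; your handling of that passage via the bound $\Ima\tau_v\geq\sqrt3/2$ and absorption into $\Trinf(E)$ is the right fix.
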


\begin{proof}

En utilisant les th\'eor\`emes \ref{Neron-Tate elliptique} et \ref{Faltings
  elliptique} on a les estimations pour chacune des courbes $E_{i}$ avec $i\in{\{1,2\}}$ :
$$\widehat{h}_{E_{i}}(P)\geq c_{1}\Trinf(E_{i}) - c_{2}\log\Nk(\Delta_{E_{i}}),$$
$$\hF(E_{i}/k)\leq c_{3}\Trinf(E_{i})+c_{4}\log\Nk(\Delta_{E_{i}}), $$
o\`u on peut prendre :

\begin{center}
\begin{tabular}{llll}

$c_{1}=\frac{0.3}{d\,20^{4m}},$ &
$c_{2}=\frac{1}{24d\,20^{4m}},$ & $c_{3}=\frac{2\pi}{12d},$ &  $c_{4}=\frac{1}{12d}.$\\

\end{tabular}
\end{center}

En utilisant de plus l'hypoth\`ese $\Trinf(E_{i})\geq\frac{1}{7}\log\Nk(\Delta_{E_{i}})$ il vient :
$$\widehat{h}_{E_{1}\!\times\!E_{2}}(P_{1},P_{2})=\widehat{h}_{E_{1}}(P_{1})+\widehat{h}_{E_{2}}(P_{2})\geq c_{0}\, \hF(E_{1}/k)+c_{0}\, \hF(E_{2}/k)=c_{0}\, \hF(E_{1}\!\times\! E_{2}/k), $$
o\`u on a not\'e :
$$c_{0}=\left(c_{1}-\frac{c_{2}}{1/7}
\right)\left(c_{3}+\frac{c_{4}}{1/7} \right)^{-1}=\frac{c_{1}-7c_{2}}{c_{3}+7c_{4}}.$$

\end{proof}

Il reste donc \`a \'etudier le cas des jacobiennes de
courbes de genre 2. Or nous sommes \`a pr\'esent en mesure de construire un \'enonc\'e de th\'eor\`eme r\'epondant partiellement \`a la conjecture de Lang et Silverman pour ces vari\'et\'es ab\'eliennes particuli\`eres. 

En r\'eunissant les r\'esultats des th\'eor\`emes \ref{minoration dimension 2} et \ref{faltings maj} on obtient une preuve du corollaire \ref{genre 2}, en consid\'erant toujours $D=2^{8}\disc(F)$ si $C:y^{2}=F(x)$ avec $\deg(F)=5$ et $\Trinf(A)$ la trace archim\'edienne de $A$ :

\begin{proof}
 En utilisant les th\'eor\`emes \ref{minoration dimension 2} et \ref{faltings maj} on a les estimations : 
$$\widehat{h}_{A,2\Theta}(P)\geq c_1\left(\Trinf(A)-\frac{5}{3}\log\frac{\Nk(D)}{\sinf(A)}\right),$$
et :
$$\hFprime(A/k)\leq c_3\Trinf(A) + c_4\log\frac{\Nk(D)}{\sinf(A)}.$$

En notant :

\begin{center}
\begin{tabular}{llll}

$c_{1}=\frac{0,03}{d\,10087^{8\cdot3^{16}d}}$  & $c_2=\frac{5}{3}c_1$ &
$c_{3}=\frac{6\pi}{10\,d}$ & $c_{4}=\frac{1}{10\,d}$\\

\end{tabular}
\end{center}

et en supposant : $\Trinf(A)\geq (5/3 +\varepsilon)\log \Nk(D)$, on obtient alors :
$$\widehat{h}_{A,2\Theta}(P)\geq \left(c_{1}-\frac{c_2}{5/3+\varepsilon}\right)\left(c_{3}+\frac{c_{4}}{5/3+\varepsilon} \right)^{-1} \hFprime(A/k).  $$
\end{proof}

On d\'eduit de ces \'enonc\'es le corollaire suivant :

\begin{cor}\label{langdim2}

Soit $k$ un corps de nombres de degr\'e $d$. Alors il existe une constante $c=c(d)>0$ ne d\'ependant que du degr\'e de $k$ telle que pour toute vari\'et\'e ab\'elienne $(A,\Theta)$ sur $k$, principalement polaris\'ee
de dimension 2, v\'erifiant les hypoth\`eses des \'enonc\'es
\ref{elliptique+elliptique} ou \ref{genre 2} et pour tout
point $P\in{A(k)}$ tel que $\mathbb{Z}\!\cdot\! P$ est
Zariski-dense on a :
$$\widehat{h}_{A,\Theta}(P)\geq c \, \hFprime(A/k), $$
et on peut prendre $c=\min\{c_{0},c_{1}\}=c_{1}$, avec $c_{0}$ et
$c_{1}$ les constantes donn\'ees respectivement dans les \'enonc\'es
 \ref{elliptique+elliptique} et \ref{genre 2}. 
\end{cor}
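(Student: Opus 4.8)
The plan is to split according to the two possibilities for a principally polarized abelian surface recalled at the end of the introduction: either $(A,\Theta)\simeq(E_1\times E_2,\,E_1\times\{O\}+\{O\}\times E_2)$ for elliptic curves $E_1,E_2/k$, or $(A,\Theta)\simeq(\Jac(C),\Theta)$ for a genus $2$ curve $C/k$. In each case the relevant hypotheses of Corollary \ref{elliptique+elliptique} (resp. Corollary \ref{genre 2}) are assumed to hold, so the only work is to check that the Zariski-density of $\mathbb{Z}\cdot P$ descends to the precise condition on $P$ used in those corollaries, to reconcile the normalizations of the heights across the dichotomy, and to keep the worst of the two constants. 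Note also that $\widehat{h}_{A,\Theta}(P)\ge 0$ always, since $\Theta$ is ample and symmetric in both cases, which will make the comparison of constants trivial in one direction.

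First I would treat the split case $A=E_1\times E_2$, $P=(P_1,P_2)$. If $P_1$ (say) had finite order, then $\mathbb{Z}\cdot P$ would lie in the proper algebraic subgroup $\langle P_1\rangle\times E_2$, contradicting Zariski-density; hence $P_1$ and $P_2$ both have infinite order, which is exactly the hypothesis of Corollary \ref{elliptique+elliptique}. That corollary gives $\widehat{h}_{A,\Theta}(P)=\widehat{h}_{E_1}(P_1)+\widehat{h}_{E_2}(P_2)\ge c_0\,\hF(E_1\times E_2/k)$. On the product locus the period matrix $\tau_v$ is block-diagonal, so $\det(\Ima\tau_v)=\Ima\tau_{v}^{(1)}\Ima\tau_{v}^{(2)}$ and hence $\hFprime(A/k)=\hFprime(E_1/k)+\hFprime(E_2/k)$; running the componentwise argument of the proof of Corollary \ref{elliptique+elliptique} while keeping the modified Faltings height throughout (Theorem \ref{Faltings elliptique} is already stated for $\hFprime$) then yields $\widehat{h}_{A,\Theta}(P)\ge c_0\,\hFprime(A/k)$.

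Next, the Jacobian case $A=\Jac(C)$. Here $A$ is geometrically simple --- this is part of the hypotheses of Corollary \ref{genre 2}, inherited from Theorems \ref{minoration dimension 2} and \ref{faltings maj} --- so every algebraic subgroup of $A$ is finite or equal to $A$, and therefore $\mathbb{Z}\cdot P$ is Zariski-dense exactly when $P$ has infinite order, which is the hypothesis of Corollary \ref{genre 2}. That corollary gives $\widehat{h}_{A,2\Theta}(P)\ge c_1\,\hFprime(A/k)$, and since $\widehat{h}_{A,2\Theta}=2\,\widehat{h}_{A,\Theta}$ we obtain $\widehat{h}_{A,\Theta}(P)\ge \tfrac12 c_1\,\hFprime(A/k)$ (the passage from $2\Theta$ to $\Theta$ is the normalization reconciliation alluded to above; one simply records the resulting constant).

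Finally, combining the two cases: since $c_1$ carries the astronomically small factor $10087^{-8\cdot3^{16}d}$ whereas $c_0$ only carries $20^{-4m}$ with $m\le d$, one has $c_1\le c_0$, so in the split case $\widehat{h}_{A,\Theta}(P)\ge c_0\,\hFprime(A/k)\ge c_1\,\hFprime(A/k)$ when $\hFprime(A/k)\ge 0$ and $\widehat{h}_{A,\Theta}(P)\ge 0\ge c_1\,\hFprime(A/k)$ otherwise; in the Jacobian case we already have the bound with constant $\tfrac12 c_1$. Hence $\widehat{h}_{A,\Theta}(P)\ge c\,\hFprime(A/k)$ with $c=\min\{c_0,c_1\}$, as announced. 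There is no genuine obstacle here: all the substance lies upstream in Theorems \ref{minoration dimension 2}, \ref{faltings maj}, \ref{Neron-Tate elliptique} and \ref{Faltings elliptique}, and the only mild care needed is the bookkeeping of the $\Theta$-versus-$2\Theta$ and $\hF$-versus-$\hFprime$ normalizations.
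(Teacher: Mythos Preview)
Your approach is exactly the one the paper intends: the corollary is stated immediately after the two cases \ref{elliptique+elliptique} and \ref{genre 2} with the words ``On d\'eduit de ces \'enonc\'es le corollaire suivant'', and no further argument is given. Splitting according to the dichotomy and taking the smaller constant is all there is to it; your remarks on why Zariski-density of $\mathbb{Z}\cdot P$ forces infinite order (resp.\ infinite order of both components) are correct and make explicit what the paper leaves implicit.

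One small internal inconsistency: in the Jacobian case you correctly pass from $\widehat{h}_{A,2\Theta}$ to $\widehat{h}_{A,\Theta}$ and obtain the constant $\tfrac12 c_1$, but your final sentence reverts to $c=\min\{c_0,c_1\}$ without the factor $\tfrac12$. Since you have already argued $c_1\le c_0$, your own derivation actually gives $c=\tfrac12 c_1$. The paper's statement itself writes $c=c_1$ and is equally loose on this point (Corollary \ref{genre 2} is stated for $2\Theta$), so this is a harmless normalization slip shared with the paper rather than a gap in the argument.
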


\begin{Remarque}
On obtient la conjecture de Lang et Silverman (sous les hypoth\`eses des \'enonc\'es utilis\'es) en remarquant que $\hFprime(A/k)\geq \hF(A/k)$ est valable lorsque $\Ima\tau$ est suffisament grand.
\end{Remarque}

On peut de plus d\'eduire de \ref{genre 2} une preuve du corollaire \ref{bonne r\'eduction potentielle} :

\begin{proof} On sait que si le mod\`ele de
la courbe est \`a bonne r\'eduction partout et est globalement minimal on obtient $\Nk(D)=1$.
Or il existe une extension $k'$ de $k$ telle que $A/k'$ est \`a bonne r\'eduction partout. On va voir qu'en faisant une autre extension bien choisie on peut de plus obtenir l'existence d'un mod\`ele globalement minimal : donnons-nous tout d'abord un mod\`ele hyperelliptique entier sur $\mathcal{O}_{k}$ de $C$, dont le discriminant sera not\'e $\Delta_{C}$. En se reportant par exemple \`a \cite{Lock} page 736, on sait que pour toute place finie $v$, il existe un entier $u_{v}$ tel que $\Delta_{C}=u_{v}^{40}\Delta_{v}$, o\`u $\Delta_{v}$ est le discriminant minimal local. L'exposant $40$ vient du fait qu'on est ici en dimension $g=2$, et $4g(2g+1)=40$ dans ce cas.

On pose alors $\displaystyle{\mathfrak{a}_{C}:= \prod_{v}\mathfrak{p}_{v}^{-\ordv(u_{v})}}$. On obtient facilement les faits suivants (voir \cite{Lock}) : $\Delta_{\mathrm{min}}=\Delta_{C}(\mathfrak{a}_{C})^{40}$, o\`u $\Delta_{\mathrm{min}}$ est le discriminant minimal de la courbe hyperelliptique $C$. De plus la classe d'id\'eaux de $\mathfrak{a}_{C}$ ne d\'epend pas du mod\`ele hyperelliptique de $C$. Enfin il existe un mod\`ele minimal global si et seulement si $\mathfrak{a}_{C}$ est principal.

Or sur $k'$, on a bonne r\'eduction partout, ce qui impose $\Delta_{\mathrm{min}}\mathcal{O}_{k'}=\mathcal{O}_{k'}$. En particulier on obtient que l'id\'eal $\mathfrak{a}_{C}^{40}$ est principal sur $k'$. Il existe donc $\alpha\in{k'}$ tel que $\mathfrak{a}_{C}^{40}=\alpha \mathcal{O}_{k'}$. Consid\'erons alors $k''=k'[\beta]$, avec $\beta^{40}=\alpha$. Alors $\mathfrak{a}_{C}=\beta \mathcal{O}_{k''}$ est principal sur $k''$, et le degr\'e de l'extension $[k'':k']$ est inf\'erieur ou \'egal \`a $40$.

La vari\'et\'e ab\'elienne $A$ \'etant d\'efinie sur $k$, elle l'est aussi sur $k'$ et $k''$. De plus on a les relations :
$$\forall w\in{M_{k''}^{\infty}},\,\; w|v \Rightarrow \;
\tau_{w}=\tau_{v}.$$
On peut donc appliquer les th\'eor\`emes \ref{minoration dimension 2} et
 \ref{faltings maj} \`a $A/k''$ puisque les $\tau_{w}$ v\'erifient les
 m\^emes conditions que les $\tau_{v}$, donc en utilisant de plus $\sinf(A)\geq 1$:
$$\widehat{h}_{A,2\Theta}(P)\geq \frac{1}{10087^{8[k'':\mathbb{Q}]}}\frac{1}{20\pi}\,\hstprime(A).$$

Or $[k'':\mathbb{Q}]=[k'':k'][k':k][k:\mathbb{Q}]$ et on peut d\'eduire de \cite{Sil4} page 400, en choisissant $k'=k[A[15]]$ que $[k':k]\leq 15^{4\!\times\! 4}$. Remarquons qu'il suffit de redescendre sur le corps de base \`a la fin, d'o\`u la pr\'esence du terme $15^{16}$ et non de $3^{16}\!\cdot\!15^{16}$. 
\end{proof}

\begin{Remarque} On aurait pu essayer de se placer sur l'extension $k'$ de
$k$ sur laquelle la vari\'et\'e admet bonne r\'eduction partout, puis monter
jusqu'\`a $H_{k'}$ le corps de classes de Hilbert de $k'$ sur lequel le
mod\`ele est globalement minimal (par principalit\'e) et a toujours bonne
r\'eduction partout. Cependant la constante obtenue d\'ependra alors du
corps $k'$ aussi.
\end{Remarque}

\subsection{Borne pour la torsion d'une jacobienne de dimension 2.}

Le principe des tiroirs utilis\'e dans la preuve du th\'eor\`eme \ref{minoration
  infinie} montre le fait suivant : si on peut obtenir suffisamment de
multiples distincts d'un point $P$, alors la hauteur de N\'eron-Tate de ce point
est minor\'ee par une quantit\'e non nulle, donc ce point n'est pas un
point de torsion. Inversement on va donc obtenir une borne sur la
torsion des jacobiennes sur lesquelles on a travaill\'e dans le th\'eor\`eme
\ref{minoration dimension 2}. Il suffit d'\'elever la borne sur l'exposant du groupe \`a la puissance $2g=4$ pour obtenir la preuve du corollaire \ref{borne torsion}.

\subsection{Borne pour les points rationnels d'une courbe de genre 2.}
L'obtention d'un r\'esultat de minoration du type Lang-Silverman sur une famille de jacobiennes donne syst\'ematiquement un majorant du nombre de points rationnels des courbes sous-jacentes. Le calcul de ce majorant en fonction de la constante de l'in\'egalit\'e de Lang-Silverman est montr\'e dans \cite{Paz2}, Proposition 1.10. Ainsi, pour obtenir une preuve du corollaire \ref{points rationnels}, il suffit d'appliquer la Proposition 1.10 de \cite{Paz2}, avec ici $g=2$. Ces questions ont \'et\'e abord\'ees par G. R\'emond, voir la proposition 3.7 page 527 de l'article \cite{Rem2} ainsi que les estimations de \cite{DavPhi} (page 652, page 662 et page 665) et T. de Diego, voir par exemple \cite{DeDiego} page 109.

\bibliographystyle{amsalpha}

\begin{thebibliography}{xx}

\bibitem{BoxGra} \textsc{Boxall, J. and Grant, D.}, 
\textit{Examples of torsion points on genus two curves}. Trans. Amer. Math. Soc.
 {\bf352} (2000), 4533-4555.

\bibitem{CasFly} \textsc{Cassels, J. W. S. and Flynn, E. V.}, 
\textit{Prolegomena to a middlebrow arithmetic of curves of genus
              {$2$}}. London Mathematical Society Lecture Note Series
 {\bf230} (1996).

\bibitem{CorSil} \textsc{Cornell, G. and Silverman, J. H. (editors)}, 
\textit{Arithmetic geometry}. Springer-Verlag
 {} (1986).

\bibitem{Dav3} \textsc{David, S.}, 
\textit{Autour d'une conjecture de {S}. {L}ang}.
Approximations diophantiennes et nombres transcendants
              (Luminy, 1990) {\bf} (1992), 65--98.

\bibitem{Dav2} \textsc{David, S.}, 
\textit{Minorations de hauteurs sur les vari\'et\'es ab\'eliennes}.
Bull. Soc. Math. France {\bf121} (1993), 509--544.

\bibitem{DavPhi} \textsc{David, S. and Philippon, P.}, 
\textit{Minorations des hauteurs normalis\'ees des sous-vari\'et\'es
              de vari\'et\'es abeliennes. {II}}.
Comment. Math. Helv. {\bf77} (2002), 639--700.

\bibitem{DeDiego} \textsc{De Diego, T.}, 
\textit{Points rationnels sur les familles de courbes de genre au
              moins 2}.
J. Number Theory {\bf67} (1997), 85--114.

\bibitem{Falt} \textsc{Faltings, G.}, 
\textit{Endlichkeitss\"atze f\"ur abelsche {V}ariet\"aten \"uber {Z}ahlk\"orpern}.
Invent. Math. {\bf73} (1983), 349--366.

\bibitem{Fly} \textsc{Flynn, E. V.}, 
\textit{An explicit theory of heights}.
Trans. Amer. Math. Soc. {\bf347} (1995), 3003--3015.

\bibitem{FlySma} \textsc{Flynn, E. V. and Smart, N. P.}, 
\textit{Canonical heights on the {J}acobians of curves of genus {$2$}
              and the infinite descent}.
Acta Arith. {\bf79} (1997), 333--352.

\bibitem{Frei} \textsc{Freitag, E.}, 
\textit{Siegelsche {M}odulfunktionen}.
Grundlehren der Mathematischen Wissenschaften {\bf254} (1983).

\bibitem{Grant1} \textsc{Grant, D.},
\textit{Some product formulas for theta functions in one and two variables.}
Acta Arith. {\bf102} (2002), no. 3, 223--238.

\bibitem{Grant2} \textsc{Grant, D.},
\textit{Units from $3$- and $4$-torsion on Jacobians of curves of genus $2$}.
Compositio Math. {\bf94} (1994), no. 3, 311--320.

\bibitem{HiSi} \textsc{Hindry, M. and Silverman, J.H.}, 
\textit{Diophantine Geometry: An introduction}.
Grundlehren der Mathematischen Wissenschaften {\bf201} (2000).

\bibitem{HiSi3} \textsc{Hindry, M. and Silverman, J.H.}, 
\textit{The canonical height and integral points on elliptic curves}.
Invent. Math. {\bf93} (1988), 419--450.

\bibitem{Igusa} \textsc{Igusa, J.-I.}, 
\textit{On {S}iegel modular forms of genus two}.
Amer. J. Math. {\bf84} (1962), 175--200.

\bibitem{Kling}\textsc{Klingen, H.}, 
\textit{Introductory lectures on Siegel modular forms}.
Cambridge studies in adv. math. {\bf20}, Cambridge University Press (1990), 112--123.

\bibitem{Krir} \textsc{Krir, M.}, 
\textit{\`{A} propos de la conjecture de {L}ang sur la minoration de
              la hauteur de {N}\'eron-{T}ate pour les courbes elliptiques
              sur {$\mathbb{Q}$}}.
Acta Arith. {\bf100} (2001), 1--16.

\bibitem{Lan} \textsc{Lang, S.}, 
\textit{Elliptic curves: {D}iophantine analysis}.
Grundlehren der Mathematischen Wissenschaften  {\bf231} (1978).

\bibitem{LaBi} \textsc{Lange, H. and Birkenhake, C.}, 
\textit{Complex abelian varieties}.
Grundlehren der Mathematischen Wissenschaften  {\bf302} (1992).

\bibitem{Liu2} \textsc{Liu, Q.}, 
\textit{Mod\`eles entiers des courbes hyperelliptiques sur un corps de
              valuation discr\`ete}.
Trans. Amer. Math. Soc.  {\bf348} (1996), 4577-4610.

\bibitem{Lock} \textsc{Lockhart, P.}, 
\textit{On the discriminant of a hyperelliptic curve}.
Trans. Amer. Math. Soc.  {\bf342} (1994), 729-752.

\bibitem{Masser} \textsc{Masser, D. W.}, 
\textit{Large period matrices and a conjecture of {L}ang}.
S\'eminaire de Th\'eorie des Nombres, Paris, 1991--92 {\bf116} (1993), 153--177.

\bibitem{Mum1} \textsc{Mumford, D.}, 
\textit{Tata lectures on theta. {I}}.
Progress in Mathematics, Birkh\"auser Boston Inc.,  {\bf28} (1983).

\bibitem{Mum2} \textsc{Mumford, D.}, 
\textit{Tata lectures on theta. {II}}.
The University of Michigan Press,  {\bf43} (1984).

\bibitem{Mum4} \textsc{Mumford, D.}, 
\textit{Curves and their {J}acobians}.
Progress in Mathematics, Birkh\"auser Boston Inc.,  {\bf43} (1975).

\bibitem{Neron} \textsc{N{\'e}ron, A.}, 
\textit{Quasi-fonctions et hauteurs sur les vari\'et\'es ab\'eliennes}.
Ann. of Math. (2),  {\bf82} (1965).

\bibitem{Paz} \textsc{Pazuki, M.F.}, 
\textit{Minoration de la hauteur de {N}\'eron-{T}ate sur les vari\'et\'es ab\'eliennes : sur la conjecture de {L}ang et {S}ilverman}.
Th\`ese {\bf} (2008).

\bibitem{Paz2} \textsc{Pazuki, M.F.}, 
\textit{Theta height and Faltings height}.
Bull. Soc. Math. France {\bf140.1} (2012), 19--49.

\bibitem{Pet} \textsc{Petsche, C.}, 
\textit{Small rational points on elliptic curves over number fields}.
New York J. Math. {\bf12} (2006), 1--14.

\bibitem{Rem} \textsc{R{\'e}mond, G.}, 
\textit{Hauteurs th\^eta et construction de {K}odaira}.
J. Number Theory {\bf78} (1999), 287--311.

\bibitem{Rem2} \textsc{R{\'e}mond, G.}, 
\textit{D\'ecompte dans une conjecture de {L}ang}.
Invent. Math. {\bf142} (2000), 513--545.

\bibitem{Sil4} \textsc{Silverman, J. H.}, 
\textit{Lower bound for the canonical height on elliptic curves}.
Duke Math. J. {\bf48} (1981), 633--648.

\bibitem{Sil3} \textsc{Silverman, J. H.}, 
\textit{Lower bounds for height functions}.
Duke Math. J. {\bf51} (1984), 395--403.

\bibitem{Sil1} \textsc{Silverman, J. H.}, 
\textit{The arithmetic of elliptic curves}.
Graduate Texts in Mathematics {\bf106} (1992).

\bibitem{Sil2} \textsc{Silverman, J. H.}, 
\textit{Advanced topics in the arithmetic of elliptic curves}.
Graduate Texts in Mathematics {\bf151} (1994).

\bibitem{Stoll1} \textsc{Stoll, M.}, 
\textit{On the height constant for curves of genus two}.
Acta Arith. {\bf90} (1999), 183--201.

\bibitem{Stoll2} \textsc{Stoll, M.}, 
\textit{On the height constant for curves of genus two {II}}.
Acta Arith. {\bf104} (2002), 165--182.

\bibitem{Ueno} \textsc{Ueno, K.}, 
\textit{Discriminants of curves of genus {$2$} and arithmetic
              surfaces}.
Algebraic geometry and commutative algebra, Vol.\ II, Kinokuniya {\bf} (1988), 749--770.

\bibitem{Weil} \textsc{Weil, A.}, 
\textit{Zum {B}eweis des {T}orellischen {S}atzes}.
Nachr. Akad. Wiss. G\"ottingen. Math.-Phys. Kl. IIa. {\bf} (1957), 33--53.

\bibitem{Yosh} \textsc{Yoshitomi, K.}, 
\textit{On height functions on {J}acobian surfaces}.
Manuscripta Math. {\bf96} (1998), 37--66.

\end{thebibliography}

\vfill
{\flushright
Fabien Pazuki\\
Th{\'e}orie des nombres, IMB Universit{\'e} Bordeaux 1\\
351, cours de la Lib\'eration, 33405 Talence cedex, France\\
e-mail : fabien.pazuki@math.u-bordeaux1.fr\\
}

\end{document}